\documentclass[11pt, a4paper, reqno]{amsart}

\usepackage[dvipsnames]{xcolor}
\usepackage[utf8]{inputenc}
\usepackage[english]{babel}
\usepackage{amsmath}        % extensions for typesetting of math
\usepackage{amsfonts}       % math fonts
\usepackage{amsthm}         % theorems, definitions, etc.
\usepackage{verbatim}
\usepackage{xargs}
\usepackage{caption}
\usepackage{subcaption}

\usepackage[colorlinks]{hyperref}
\usepackage[nameinlink]{cleveref}
\usepackage{tikz-cd}
%\usetikzlibrary{decorations.pathreplacing}
\usetikzlibrary{decorations,calligraphy}
\usepackage{mathtools}

\usepackage{amssymb}
\usepackage{enumerate}
\usepackage{enumitem}
\usepackage{makecell}
\usepackage{pgfplots}
\usepackage{tabularx}

\usepackage[colorinlistoftodos]{todonotes}
\usepackage{xifthen}
\newcommand{\ifem}[3]{\ifthenelse{\isempty{#1}}{#2}{#3}}  % If empty argument, do #2; else do #3
\newcommandx{\marie}[3][1=, 2=]{
	\ifem{#1}{
		\todo[color=magenta!50]{#3}
	}{
	\ifem{#2}{
		\todo[color=magenta!50, #1, caption={}]{#3}
	}{
		\todo[color=magenta!50, #1, caption={#2}]{ #3}
}}}

\newcommandx{\roland}[3][1=, 2=]{
	\ifem{#1}{
		\todo[color=green!50]{#3}
	}{
	\ifem{#2}{
		\todo[color=green!50, #1, caption={}]{#3}
	}{
		\todo[color=green!50, #1, caption={#2}]{ #3}
}}}

\usepackage{soul}
\usepackage{geometry}
\geometry{body={}, left=1.2in, right=1.2in, top=1.25in, bottom=1in, marginparwidth=70pt}
%BrickRed
\hypersetup{linkcolor=Cerulean, citecolor=Orange, urlcolor=DarkOrchid}

%% Bibliography %%
\usepackage[backend=bibtex,bibencoding=utf8,style=alphabetic]{biblatex}
\addbibresource{refs.bib}
\ExecuteBibliographyOptions{
	%sortcites = true,
	%  singletitle = true,
	%  uniquename = full,
	citetracker=true,
	%uniquelist=false,
	hyperref=true,
	maxalphanames = 3, % at most 3 authors are listed in brackets
	minalphanames = 2, % after abbrev,2 authors are listed in brackets followed by a +
	maxbibnames=99, %all authors are listed in bibliography
}
\AtEveryBibitem{%
	\ifentrytype{article}{
		\clearfield{publisher}%
		\clearfield{issn}%
		\clearfield{isbn}%
		\clearfield{month}
	}{}
	\ifentrytype{inproceedings}{
		\clearfield{publisher}%
		\clearfield{issn}%
		\clearfield{isbn}%
	}{}
}
\AtBeginBibliography{\small}
\usepackage{xurl}

\theoremstyle{plain}
\newtheorem{thm}{Theorem}[subsection]

\newenvironment{manualtheorem}[1]{%
	\manualtheoreminner
}{\endmanualtheoreminner}

\theoremstyle{definition}
\newtheorem{defn}[thm]{Definition}
\newtheorem{proposition}[thm]{Proposition}
\newtheorem{corollary}[thm]{Corollary}
\newtheorem{lemma}[thm]{Lemma}
\newtheorem{rem}[thm]{Remark}
\newtheorem{example}[thm]{Example}

\newcommand{\im}{\operatorname{Im}}

\numberwithin{equation}{section}

%%% Commands %%%
\newcommand{\Z}{\mathbb Z}
\newcommand{\N}{\mathbb N}
\newcommand{\R}{\mathbb R}

\newcommand{\moment}[1][d]{\nu_{#1}}
\newcommand{\poly}{P_\xi}
\newcommand{\polyc}{P_\sigma}
\newcommand{\h}{\mathfrak{h}}
\newcommand{\point}[1]{\frac{\moment(#1)}{\inner{\xi,\moment(#1)}}}
\DeclareMathOperator{\ins}{ins}
\newcommand{\cT}{\mathring{T}}
\newcommand{\cI}{\mathring{I}}
\newcommand{\cS}{\mathring{S}}
\newcommand{\cD}{\mathring{D}}

\newcommand{\cfacets}[1][\cD]{\mathcal C_{#1}}

\usepackage[bb=boondox]{mathalfa}

\newcommand{\inner}[1]{\langle #1 \rangle}
\newcommand{\ma}{\begin{pmatrix} }
\newcommand{\trix}{\end{pmatrix} }
\newcommand{\sma}{ \left( \begin{smallmatrix} }
\newcommand{\strix}{ \end{smallmatrix} \right) }

\DeclareMathOperator{\conv}{conv}

\DeclareMathOperator{\sgn}{sgn}
\DeclareMathOperator{\cone}{cone}
\DeclareMathOperator{\interior}{int}

\DeclareMathOperator{\parity}{par}

%%%%%%%%%%%%%%%%%%%%%%%%%%%%%%%%%%%%%   adjusting the style of table of contents
\usepackage{xpatch}
\makeatletter   
\xpatchcmd{\@tocline}
{\hfil\hbox to\@pnumwidth{\@tocpagenum{#7}}\par}
{\ifnum#1<0\hfill\else\dotfill\fi\hbox to\@pnumwidth{\@tocpagenum{#7}}\par}
{}{}
\makeatother    
% *******************************************   

\makeatletter
 \def\l@subsection{\@tocline{2}{0pt}{1.15cm}{1cm}{}}
\def\l@subsubsection{\@tocline{3}{0pt}{8pc}{8pc}{}}
 \makeatother
%%%%%%%%%%%%%%%%%%%%%%%%%%%%%%%%%%%%%%%%%%%%%%%%%%%%%%%%%%%%%%%%%%%%%%%%%%%%%%%%

%%%%%%%%%%%%%%%%%%%%%%%%%%%%%%%%%%%%5   hiding Acknowledgements from the table of contents
\newcommand{\nocontentsline}[3]{}
\newcommand{\tocless}[2]{\bgroup\let\addcontentsline=\nocontentsline#1{#2}\egroup}
%%%%%%%%%%%%%%%%%%%%%%%%%%%%%%%%%%%%%%%%%%%%%%%%%%%%%%%%%%%%%%%%%%%%%%%5

\makeatletter
\providecommand\@dotsep{5}
\def\listtodoname{List of Todos}
\def\listoftodos{\@starttoc{tdo}\listtodoname}
\makeatother

\date{}

\title[Veronese polytopes]{Veronese polytopes \\ \scriptsize{Extending the framework of cyclic polytopes}}

\author{Marie-Charlotte Brandenburg \and Roland Púček}
\address{Marie-Charlotte Brandenburg, KTH Royal Institute of Technology, Lindstedtsvägen 25, 114 28 Stockholm, Sweden}
\email{\href{mailto:brandenburg@kth.se}{brandenburg@kth.se}}
\address{Roland Púček, Mathematical Institute, University of Jena, Ernst-Abbe-Platz 1-2, 07743 Jena, Germany}
\email{\href{mailto:roland.pucek@uni-jena.de}{roland.pucek@uni-jena.de}}

\makeatletter
\@namedef{subjclassname@2020}{\textup{2020} Mathematics Subject Classification}
\makeatother
\subjclass[2020]{
52B05, %Combinatorial properties of polytopes and polyhedra (number of faces, shortest paths, etc.)
52B11, %n-dimensional polytopes
14N20, %Configurations and arrangements of linear subspaces (14Nxx=Projective and Enumerative geometry).
52C35, %Arrangements of points, flats, hyperplanes (aspects of discrete geometry)
53A04 %Curves in Euclidean and related spaces
(Primary).
52B40, %Matroids in convex geometry (realizations in the context of convex polytopes, convexity in combinatorial structures, etc.)
52C40, %Oriented matroids in discrete geometry
15A69 %Multilinear algebra, tensor calculus
(Secondary).
}
\keywords{
	Veronese factorization structure,
	rational normal curve,
	cyclic polytopes,
	compatible cones and polytopes,
	number of facets,
	generalised Gale condition,
	circular facet condition
}

\apptocmd{\thebibliography}{\fontsize{11}{15}\selectfont}{}{}
%%%%%% \fontsize BLAH can be exchanged with \normalsize

\usepackage{soul}
\renewcommand{\emph}{\textit}
\sethlcolor{magenta!10}

\newcommand{\blockcomment}[1]{%
}%

\begin{document}

\maketitle

\vspace{-1.5em}

\begin{abstract}
This article introduces the theory of Veronese polytopes, a broad generalisation of cyclic polytopes.
These arise as convex hulls of points on curves with one or more connected components, obtained as the image of the rational normal curve in affine charts.
We describe their facial structure by extending Gale’s evenness condition, and provide a further combinatorial characterisation of facets via $\sigma$-parity alternating sequences.
Notably, we establish a bijective correspondence between combinatorial types of Veronese polytopes and partitions of finite sets equipped with a cyclic order, called circular compositions.
We show  that, although the only Veronese $3$-polytopes are the cyclic $3$-polytopes and the octahedron, in general dimension they form a rich and diverse class including all combinatorial types of simplicial $d$-polytopes with at most $d+3$ vertices, the cross-polytope and particular stacked polytopes.
In addition, we characterise which curves defining Veronese polytopes are $d$-order curves, and provide a closed formula for the number of facets of any Veronese polytope. 
\end{abstract}

\thispagestyle{empty}

\vspace{1em}

This article develops the theory of \emph{Veronese polytopes}, a natural and novel generalisation of cyclic polytopes. It emerges from the study of factorization structures in differential geometry \cite{apostolov2015ambitoric,pucek2022extremal,pucek2023factorization}.
In this context, Veronese polytopes arise as the class of polytopes compatible with the \emph{Veronese factorization structure}, one of the simplest non-trivial factorization structures. 
They form a large and diverse family of simplicial polytopes whose examples include all combinatorial types of $d$-dimensional simplicial polytopes on $d+1, d+2$ and $d+3$ vertices, cyclic polytopes, cross-polytopes, certain stacked polytopes, and a plenitude of unnamed simplicial polytopes.
Importantly, the aforementioned compatibility equips Veronese polytopes with a common framework which allows us to handle this set of seemingly unrelated polytopes at once by uniform methods.
This work presents the first systematic study of polytopes compatible with any factorization structure, opening new directions for research in polytope theory, with numerous applications to differential geometry. \\

The class of Veronese polytopes contains and exceeds the most classical of all polytopes constructed on curves:
The \emph{standard cyclic polytope} is the convex hull of finitely many points $\moment(t), t\in T$, on the \emph{moment curve} $\moment: \R \to \R^{d+1}, t \mapsto (1,t,t^2,\dots,t^d)$, and any polytope combinatorially equivalent to the standard cyclic polytope is a \emph{cyclic polytope}.
A choice of a non-zero $\xi \in (\R^{d+1})^* \setminus \bigcup_{t \in T} (\nu_d(t))^\circ$ gives a \emph{Veronese polytope} 
$$
P_\xi(T) = \conv\left(\point{t} \mid t \in T\right),
$$
where $(\nu_d(t))^\circ$ denotes the annihilator of $\nu_d(t)$ in $(\mathbb{R}^{d+1})^*$.
In particular, $\xi = (1,0,\dots,0)$ recovers the standard cyclic polytope.
Defined more abstractly, a Veronese polytope is the convex hull of finitely many points lying on the image of the \emph{factorization curve} of the Veronese factorization structure in an affine chart $\xi$. 
Notably, the factorization curve governs the entire geometry and combinatorics of Veronese polytopes.\\

We draw our motivation for this article from two directions: the theory of factorization structures in differential geometry, and the generalisation of cyclic polytopes as studied in discrete geometry and geometric combinatorics.

In differential geometry, one of the main research directions is to seek canonical geometric structures, often arising as extremal points of a (energy) functional, such as the heavily studied extremal Kähler metrics \cite{calabi1982extremal, calabi1985extremal}. Finding non-trivial explicit examples of these metrics is a challenging task, and several were provided ad hoc using toric geometry \cite{
	simanca1991kahler,
	simanca1992note,
	abreu1998kahler,
	article,
	apostolov2003geometry,
	apostolov2004hamiltonian,
	jsg/1310388900,
	apostolov2016ambitoric,
	apostolov2021cr}
Factorization structures offer a unifying framework encompassing all known explicit extremal toric Kähler metrics and providing new examples \cite{pucek2022extremal}.
Additionally, it determines extensive families of explicit toric Sasaki and Kähler geometries amenable to computations.
The connection to this work arises through the momentum map of a toric geometry whose image is a (rational) Delzant polytope.
When such a toric geometry is determined by a factorization structure, the Delzant polytope is dual to a compatible polytope, as defined in this article.
For the geometry to be compact, an often required attribute, one undergoes an involved process of compactification: the geometry is defined on the interior of a polytope and needs to be extended to the boundary. This process necessitates a detailed understanding of compatible polytopes.

In discrete geometry, our motivation lies in extending techniques of cyclic polytopes to Veronese polytopes, as outlined in the following.
To better position this article within existing literature, we recall that Veronese and cyclic polytopes share a common definition as convex hulls of points on curves, and explore the relationship between these curves and the resulting polytopes.
For cyclic polytopes, \cite{cordovil00_cyclicpolytopesoriented} finds that the convex hull of any finite number of points on a curve $\alpha: I\subset\mathbb{R} \to \mathbb{R}^d$ is a cyclic polytope if and only if $\alpha$ is a \textit{$d$-order curve}, i.e., a continuous injective map such that every affine hyperplane in $\mathbb{R}^d$ intersects $\im\alpha$ in at most $d$ points.
This elegant characterisation can be seen as a culmination of a considerable attention devoted to the theory of $d$-order curves, see e.g., \cite{caratheodory1911variabilitatsbereich,derry56_convexhullssimple,fabriciusbjerre62_polygonsordern,gale1963neighborly,sturmfels87_cyclicpolytopesd}.
To address the defining curve of a Veronese polytope, $t \mapsto \nu_d(t)/\langle \xi, \nu_d(t) \rangle$, note that its projectivisation is the same as that of the moment curve -- the rational normal curve, a distinguished curve of algebraic geometry.
This implies that all Veronese polytopes are simplicial (\Cref{prop:simplicial}), any simplicial polytope with at most $d+3$ vertices can be realized as a Veronese polytope (\Cref{thm:comb-types-few-vertices}), and that curves $\nu_d$ and $\nu_d/\langle \xi, \nu_d \rangle$ satisfy the hyperplane condition of $d$-order curves. However, we emphasise that Veronese polytopes stand out from the existing literature as their defining curves are often not connected, and therefore not $d$-order curves.

\begingroup
\hypersetup{hidelinks}
\begin{manualtheorem}{\Cref{d order curve}}
	\hypertarget{th:intro}{The}
	factorization curve $\psi$ of the Veronese factorization structure 
	in the affine chart $\xi$, i.e., the curve 
	$\nu_d/\langle \xi, \nu_d \rangle$,
	is a $d$-order curve if and only if $\xi$ lies on the rational normal curve \eqref{xi curve}.
\end{manualtheorem}
\endgroup

The study of cyclic polytopes dates back to the seminal works of Carathéodory \cite{caratheodory1911variabilitatsbereich} and Gale \cite{gale1963neighborly}.
A notable result is the description of their facial structure in terms of the non-negativity of certain univariate polynomials, known as \emph{Gale's evenness condition}.
We provide a generalisation of this condition for Veronese polytopes.

\enlargethispage{\baselineskip}

\begingroup
\hypersetup{hidelinks}
\begin{manualtheorem}{\Cref{geometric Gale}}
	Let $P_\xi(T)$ be a $d$-dimensional Veronese polytope and $S \subset T$ be of cardinality $d$. The unique hyperplane determined by the points
	\[
	\frac{\moment(t)}{\sum_{i=0}^d \xi_i t^i} \ , \quad t \in S,
	\]
	is a facet-supporting hyperplane of $P_\xi(T)$ if and only if the values
	\begin{gather}\label{eq:intro}
		\frac{\prod_{s \in S} (t - s)}{\sum_{i=0}^{d} \xi_i t^i},
		\hspace{.2cm}
		t \in T \backslash S,
	\end{gather}
	have equal signs.
\end{manualtheorem}
\endgroup

The $k$ roots of the polynomial $q_\xi(t) = \sum_{i=0}^d \xi_it^i$ from the denominator of \eqref{eq:intro} induce a partition of $T$ into $k+1$ discrete intervals, which, as we show, depends only on the chamber $\sigma$ of the \emph{hyperplane arrangement} $\mathcal H(T) = (\R^{d+1})^* \setminus \bigcup_{t \in T} (\nu_d(t))^\circ$ where $\xi$ belongs to.
This partition allows us to characterise facets of Veronese polytopes in a purely combinatorially fashion, via \emph{$\sigma$-parity alternating subsequences} of $\{1,\dots,|T|\}$ (\Cref{facets Veronese}), and as subsets of $T$ which admit a particular decomposition (\Cref{lem:facet-roots}). \\

The theory of factorization structures suggests an alternative, projective point of view on the construction of Veronese polytopes.
By projectivising the generating points of~$P_\xi(T)$, we obtain projective points $p_1,\ldots,p_{|T|}$ lying on the image of the rational normal curve $\psi: \mathbb{P}^1 \to \mathbb{P}^d$. As $\psi$ is injective and $\mathbb{P}^1 \cong \mathbb{S}^1$, we may interpret the preimages of the points $p_1,\dots,p_{|T|}$ as $|T|$ points on $\mathbb S^1$, yielding a set $\cT$ of cardinality $|T|$ equipped with a \emph{cyclic order} \cite{huntington16_setindependentpostulates,huntington35_interrelationsfour,coxeter93_realprojectiveplane}.
To capture the information carried originally by $\xi$ in this projective setting, we work with the \emph{points at infinity with respect to $\xi$}, namely,  $\im \psi \cap \xi^0$. The latter is the intersection of a degree $d$ curve with a hyperplane, and thus consists of at most $d$ elements. On the circle $\mathbb{S}^1$, we interpret these elements as separators, partitioning $\mathbb{S}^1$ (and thus $\cT$) into $k$ arcs, obtaining a \emph{circular composition} of $\cT$. The circular composition uniquely determines the combinatorics of the Veronese polytope.

\begingroup
\hypersetup{hidelinks}
\begin{manualtheorem}{\Cref{th:circular-compositions-comb-types}}
	\hypertarget{th:intro2}{Two} Veronese polytopes are combinatorially equivalent if and only if their circular compositions are isomorphic.
\end{manualtheorem}
\endgroup

The concept of a circular composition proves to be a powerful one as it easily identifies abundance of isomorphisms between Veronese polytopes, and carries complete and directly accessible information about vertices and facets (\Cref{th:circular-compositions-comb-types}, \Cref{vertices}).
Among others, it allows us to realise cross-polytopes as Veronese polytopes (\Cref{th:cross}), to prove that all $3$-dimensional Veronese polytopes are either cyclic or the octahedron (\Cref{cor:3d})
and provides examples of cyclic Veronese polytopes whose defining curves fail to be $d$-order curves (\Cref{prop:xi-unit-direction-combinatorial}).
Our findings are supported by computational evidence as displayed in  \Cref{table:small-dimensions,table:more-comb-types} (on \cpageref{table:small-dimensions,table:more-comb-types}), which show the number of combinatorial types of Veronese polytopes in small dimensions, in comparison to neighbourly and general simplicial polytopes (see also \Cref{rem:computational}).

Circular compositions also enable us to provide an explicit formula for the number of facets for Veronese polytopes. (\Cref{th:facet-count}).
Cyclic polytopes, being prototypical examples of \emph{neighbourly polytopes} \cite{gale1963neighborly,grunbaum1967convex,mcmullen1971convex}, maximise the number of facets among all simplicial $d$-polytopes by the \emph{Upper Bound Theorem} \cite{mcmullen70_maximumnumbersfaces}.
Due to their construction, it may be tempting to speculate that Veronese polytopes are neighbourly~as well. However, as we show in \Cref{th:stacked}, in every dimension and for every number of vertices the class of Veronese polytopes contains particular \emph{stacked polytopes}, the minimisers of the number of facets as established by the \emph{Lower Bound Theorem} \cite{barnette71_minimumnumbervertices,barnette73_prooflowerbound}. \\

Veronese polytopes are one of several directions for generalising cyclic polytopes. The literature presents various other generalisations, including \emph{almost cyclic polytopes}, a subclass of neighbourly polytopes \cite{shemer87_almostcyclicpolytopes}, and \emph{alternating uniform matroids}, which serve as an oriented matroid analogue of cyclic polytopes \cite{cordovil00_cyclicpolytopesoriented}.
Another direction is the study of convex hulls of infinitely many points on a curve.
Recently, questions from stochastic analysis and computational geometry motivated the study of convex hulls of a bounded segment of the moment curve and of limits of $d$-order curves \cite{pont2023newproofdescriptionconvex,amendola23_convexhullsofcurves}.
Similarly, convex hulls of monomial curves and general $1$-dimensional semi-algebraic sets are of interest in convex optimisation \cite{averkov24_convexhullsmonomial,scheiderer24_convexhullscurves}.

\tocless{\subsection*{Acknowledgements}}
MB was supported by the Wallenberg AI, Autonomous Systems and Software Program (WASP) funded by the Knut and Alice Wallenberg Foundation.

\begin{table}[h!]
	\begin{minipage}{1.03\textwidth}
	\begin{tabular}{|c|c|ccc|}
		\hline
		$d$ & $n$  & Veronese & neighbourly & simplicial \\ \hline
		3 & 4  & 1        & 1          & 1          \\
		& 5  & 1        & 1          & 1          \\
		& 6  & 2        & 2          & 2          \\
		& 7  & 1        & 5          & 5          \\
		& 8  & 1        & 14         & 14         \\
		& 9  & 1        & 50         & 50         \\
		& 10 & 1        & 233        & 233        \\
		& 11 & 1        & 1249       & 1249       \\
		& 12 & 1        & 7595       & 7595       \\
		& & & &\\ \hline
	\end{tabular}
	\hspace{.5em}
	\begin{tabular}{|c|c|ccc|}
		\hline
		$d$ & $n$  & Veronese & neighbourly & simplicial \\ \hline
		4 & 5  & 1        & 1          & 1          \\
		& 6  & 2        & 1          & 2          \\
		& 7  & 5        & 1          & 5          \\
		& 8  & 6        & 3          & 37         \\
		& 9  & 5        & 23         & 1142      \\
		& 10  & 6        & 431         & 162004      \\
		& 11  & 6        & 13935         & ?      \\
		& 12  & 7        & \makecell[tc]{556061 \\or\\ 556062}         & ?      \\
		\hline
	\end{tabular}
	
	\vspace{1em}
	\begin{tabular}{|c|c|ccc|}
		\hline
		$d$ & $n$  & Veronese & neighbourly & simplicial \\ \hline
		5  & 6  & 1        & 1          & 1          \\
		& 7  & 2        & 1          & 2          \\
		& 8  & 8        & 2         & 8         \\
		& 9  & 9        & 125     & 322        \\
		& 10 & 10      & 159375 & ?      \\
		\hline
	\end{tabular}
	\hspace{.5em}
	\begin{tabular}{|c|c|ccc|}
		\hline
		$d$ & $n$  & Veronese & neighbourly & simplicial \\ \hline
		6  & 7  & 1        & 1          & 1          \\
		& 8  & 3        & 1         & 3         \\
		& 9  & 18        & 1     & 18        \\
		& 10 & 24      & 37 & ?      \\
		& 11& 27 &42099    &?\\ \hline
	\end{tabular}
	
	\centering
	\vspace{1em}
	\begin{tabular}{|c|c|ccc|}
		\hline
		$d$ & $n$  & Veronese & neighbourly & simplicial \\ \hline
		7 & 8  & 1        & 1         & 1         \\
		& 9  & 3        & 1     & 3        \\
		& 10 & 29      & 4 & 29      \\
		\hline
	\end{tabular}
	\end{minipage}
	\caption{Computational results for the number of combinatorial types of Veronese polytopes per dimension $d$ and number of vertices $n$. The number of general neighbourly and simplicial polytopes are displayed for comparison \cite[Table 1]{firsching17_realizabilityinscribabilitysimplicial}. See also \Cref{table:more-comb-types}.}
	\label{table:small-dimensions}
\end{table}

%%%%%%%%%%%%%5 ads table of contents as a bookmark into the pdf
\addtocontents{toc}{\protect{\pdfbookmark[1]{\contentsname}{toc}}}
\renewcommand\contentsname{\vspace{2em} \\ Contents \\ \vspace{1em} }
%%%%%%%%%%%%%%%%%%%%%%%%%%%%%%

{\hypersetup{hidelinks}
	\tableofcontents
}

\newpage

\section{Background}

\subsection{Factorization structures}

We recall results from the theory of factorization structures, which form the foundation for the theory of Veronese polytopes. A detailed and comprehensive account of this subject can be found in \cite{pucek2023factorization}. \\

To establish notation let $V_1,\ldots,V_d$, $d\geq2$, denote real/complex 2-dimensional vector spaces, and let $[d] = \{1,\dots,d\}$. For $j\in[d]$ and a $1$-dimensional subspace $\ell\subset V_j$ we define
\begin{align*}
	V=\bigotimes_{r=1}^d V_r
	\hspace{1cm}
	\text{and}
	\hspace{1cm}
	\Sigma_{j,\ell}=
	V_1\otimes\cdots\otimes V_{j-1}\otimes\ell\otimes V_{j+1}\otimes\cdots\otimes V_m,
\end{align*}
and denote the dual of $V$ by $V^*$ and the annihilator of $\Sigma_{j,\ell}$ in $V^*$ by $\Sigma_{j,\ell}^0$. \par
The projective space $\mathbb{P}(W)$ is viewed as the set of $1$-dimensional subspaces in the vector space $W$, equipped with the Zariski topology. Often, we identify $\ell\in\mathbb{P}(W)$ with the corresponding $1$-dimensional subspace of $W$, and for this reason, points of $\mathbb{P}(W)$ are also referred to as \textit{directions}. We say that a condition holds for a \textit{generic} point or \textit{generically} if there exists an open non-empty subset $U\subset\mathbb{P}(W)$ such that the condition holds at each point of $U$.

\begin{defn}\label{fs def}
Let $d$ be a positive integer. An injective linear map $\varphi:\mathfrak{h}\to V^*$ of a real/complex $(d+1)$-dimensional vector space $\mathfrak{h}$ into real/complex $V^*$ is called a \textit{factorization structure} of dimension $d$ if
\begin{align}\label{wfs def condition}
\dim \left( \varphi(\mathfrak{h}) \cap \Sigma_{j,\ell}^0 \right) = 1
\end{align}
holds for every $j\in[d]$ and generic $\ell\in\mathbb{P}(V_j)$.
An isomorphism of factorization structures is the commutative diagram
\begin{center}
\begin{tikzcd}
	\mathfrak{h}_1 \arrow[d, "\varphi_1"'] \arrow[rr, "\Phi"]                      &  & \mathfrak{h}_2 \arrow[d, "\varphi_2"] \\
	V_1^*\otimes\cdots\otimes V_m^* \arrow[rr, "(\phi_1\otimes\cdots\otimes\phi_m)\pi"] &  & W_1^*\otimes\cdots\otimes W_m^*      
\end{tikzcd},
\end{center}
where $\Phi$ and $\phi_j:V_{\sigma(j)}^*\to W_j^*$ are linear isomorphisms for all $j\in[d]$, and $\pi$ is a permutation of $[d]$ viewed as the braiding map
$V_1^*\otimes\cdots\otimes V_d^*\to V_{\pi(1)}^*\otimes\cdots\otimes V_{\pi(d)}^*$.
\end{defn}

\begin{rem}\label{the remark}
Setting $\pi=\text{id}$ and $\phi_j=\text{id}$, $j\in [d]$, shows that any two factorization structures with the same images are undistinguishable up to a choice of $\Phi$, which does not play a role in the defining condition \eqref{wfs def condition}. Thus, a factorization structure $\varphi$ can be identified with the subspace $\varphi(\mathfrak{h}) \subset V^*$.
\end{rem}

The defining equation \eqref{wfs def condition} of a factorization structure induces an assignment: a generic point $\ell\in\mathbb{P}(V_j)$ is mapped to
$\varphi^{-1} \left( \varphi(\mathfrak{h}) \cap \Sigma_{j,\ell}^0 \right) \in \mathbb{P}(\h)$. It can be shown that this map extends uniquely to a regular map $\psi_j: \mathbb{P}(V_j) \to \mathbb{P}(\h)$, i.e., a projective curve, and thus each factorization structure of dimension $d$ induces $d$ projective algebraic curves called \textit{factorization curves}.

\begin{example}[Veronese factorization structure]\label{SV ex}
A crucial example of a factorization structure is \textit{the Veronese factorization structure}, being
\begin{gather*}
\varphi: S^dW^* \to (W^*)^{\otimes d},
\end{gather*}
where $\dim W = 2$ and $\varphi$ is the canonical inclusion of symmetric tensors into tensors. The condition \eqref{wfs def condition} can be easily verified, since the dimension of
\begin{gather*}
\varphi(\h) \cap \Sigma_{j,\ell}^0 = S^dW^* \cap (W^*)^{\otimes j-1} \otimes \ell^0 \otimes (W^*)^{\otimes d-j} = (\ell^0)^{\otimes d}, \hspace{.4cm} j \in [d],
\end{gather*}
is one for any, and thus for generic, $\ell \in \mathbb{P}(W)$. This computation also yields factorization curves $\psi_j: \mathbb{P}(W) \to \mathbb{P}(S^dW^*)$,
\begin{gather*}\label{Veronese curves}
\varphi \circ \psi_j(\ell) = (\ell^0)^{\otimes d}, \hspace{1cm} j\in[d].
\end{gather*}
Observe that all factorization curves are defined globally and that they all coincide, i.e., $\psi_1 = \cdots = \psi_d =: \psi$. Moreover, $\psi$ is a regular map, i.e., given by homogeneous polynomials of the same degree in homogeneous coordinates on $\mathbb{P}(W)$ and $\mathbb{P}(S^dW^*)$. In fact, $\psi$ is the \emph{rational normal curve}. More concretely, as detailed in \Cref{sec:curve}, in particular coordinates on $\mathbb{P}(S^dW^*)$
and in an affine chart given by a non-zero $\xi \in S^d W$, the curve $\psi$ reads 
\begin{align*}
	t &\mapsto 
	\frac{(1,t,\ldots,t^d)}{\sum_{i=0}^{d} \xi_i t^i}.
\end{align*}
Note that for $\xi = (1,0,\ldots,0)$ we recover the moment curve $t \mapsto (1,t,\ldots,t^d)$, which is foundational to the theory of cyclic polytopes \cite{caratheodory1911variabilitatsbereich,gale1963neighborly}.

The following is a well-known fact about rational normal curves which we use extensively.

\begin{lemma}[{\cite{harris2013algebraic}}]\label{indep dir}
Any $k \leq d + 1$ directions on $\psi$ are linearly independent.
\end{lemma}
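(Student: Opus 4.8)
The plan is to reduce the statement to the classical fact that a Vandermonde determinant in pairwise distinct nodes does not vanish. I would work in the coordinates on $\mathbb{P}(S^dW^*)$ recalled above, in which the affine part of $\psi$ is the moment curve $t\mapsto(1,t,\ldots,t^d)$, the unique remaining direction of $\psi$ being the one at infinity, with homogeneous coordinates $(0,\ldots,0,1)$. A choice of $k$ directions on $\psi$ is then the same as a choice of $k$ pairwise distinct parameters $t_1,\ldots,t_k\in\mathbb{P}(W)$; pick representing vectors $v_1,\ldots,v_k\in S^dW^*$. I would first assume that none of the $t_i$ equals the point at infinity, so that one may take $v_i=(1,t_i,t_i^2,\ldots,t_i^d)$.

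To prove that such $v_1,\ldots,v_k$ are linearly independent it suffices to exhibit one invertible $k\times k$ submatrix of the $k\times(d+1)$ matrix $M$ whose rows are $v_1,\ldots,v_k$. Since $k\le d+1$, the submatrix $M'$ formed by the first $k$ columns of $M$ is the Vandermonde matrix $\big(t_i^{\,j-1}\big)_{i,j=1}^{k}$, and
\[
\det M' = \prod_{1\le i<j\le k}(t_j-t_i)\neq 0
\]
because the parameters are pairwise distinct. This proves the lemma whenever all chosen directions are finite, and the argument is identical for real or complex vector spaces.

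It remains to accommodate the direction at infinity. One way: if, say, $t_k$ is the point at infinity with representative $v_k=(0,\ldots,0,1)$, select instead the columns of $M$ indexed $1,\ldots,k-1,d+1$ and expand the resulting determinant along its last row; what survives is $\pm$ the Vandermonde determinant in the remaining pairwise distinct finite parameters $t_1,\ldots,t_{k-1}$, again nonzero. Alternatively, a projective change of the parameter on $\mathbb{P}(W)$ induces a linear automorphism of $S^dW^*$ that maps $\psi$ to itself, and one can choose it so that all $k$ selected directions become finite; since linear independence is invariant under linear automorphisms, the general case reduces to the finite one.

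I do not expect a genuine obstacle here. The mathematical content is entirely the nonvanishing of a Vandermonde determinant; the only points requiring a little care are the bookkeeping for the direction at infinity and the observation that one needs only a single nonvanishing maximal minor, evaluated on the chosen representatives of the directions, rather than a statement about arbitrary representing vectors.
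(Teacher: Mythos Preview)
Your proposal is correct; the Vandermonde argument is the standard proof of this classical fact, and your handling of the direction at infinity (either by the column selection $1,\ldots,k-1,d+1$ or by a projective change of parameter) is fine. The paper does not supply its own proof of this lemma at all --- it simply cites Harris's textbook --- so there is nothing to compare against beyond noting that what you have written is precisely the argument one would expect to find behind such a citation.
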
 
\end{example}

\subsection{Segre-Veronese factorization structures}\label{sec:SV}

While this work primarily focuses on the geometry and combinatorics of the Veronese factorization structure, this subsection introduces a class of factorization structures that generalise the Veronese factorization structure. It aims to enhance our understanding of factorization structures. In the following we choose not to elaborate on the specific shapes of the defining tensors, since this is outside the scope of our study; interested readers can find extensive information on this topic in \cite{pucek2023factorization}. \\

Let $k$ be a positive integer. For a partition $d=d_1+\cdots+d_k$, $d_j\geq1$, and a fixed $j\in[k]$ we define the operator
$$\ins_j:
(W_j^*)^{\otimes d_j}\otimes\bigotimes_{\substack{i=1\\i\neq j}}^k (W_i^*)^{\otimes d_i}
\to
\bigotimes_{i=1}^k (W_i^*)^{\otimes d_i}$$
which acts on decomposable tensors by
\begin{align*}
\left(w_j^1\otimes\cdots\otimes w_j^{d_j}\right)
\otimes
\bigotimes_{\substack{i=1\\i\neq j}}^k
\left(w_i^1\otimes\cdots\otimes w_i^{d_i}\right)
\mapsto
\bigotimes_{i=1}^k
\left(w_i^1\otimes\cdots\otimes w_i^{d_i}\right),
\end{align*}
where $W_j$, $j\in [k]$, are vector spaces. Partitions $d=d_1+\cdots+d_p$ and $d=e_1+\cdots+e_q$ are considered to be the same if $\{d_1,\ldots,d_p\}=\{e_1,\ldots,e_q\}$, and distinct if they are not the same.

\begin{defn}\label{SV def}
For a partition $d = d_1+\dots+d_k$ of an integer $d\geq2$ and $2$-dimensional vector spaces $W_r$, $r\in [k]$, , let 
$\Gamma_j
\subset
\bigotimes_{r=1,r\neq j}^k(W_r^*)^{\otimes d_r}$,
$j\in[k]$, be $1$-dimensional subspaces such that
\begin{align}\label{SV image}
\sum_{j=1}^{k}
\ins_j
\left(
S^{d_j}W_j^*\otimes\Gamma_j
\right)
\end{align}
has dimension $d+1$, where $S^{d_j}W_j^*\subset(W_j^*)^{\otimes d_j}$ is viewed as the subspace of symmetric tensors. The \textit{standard Segre-Veronese factorization structure} is defined to be the canonical inclusion
\begin{align}\label{SV inclusion}
\sum_{j=1}^{k}
\ins_j
\left(
	S^{d_j}W_j^*\otimes \Gamma_j
\right)
\hookrightarrow
\bigotimes_{j=1}^k(W_j^*)^{\otimes d_j}.
\end{align}
Factorization structures corresponding to trivial partitions, $d=1+\cdots+1$ and $d=d$, are called \textit{Segre} and \textit{Veronese} respectively, the latter being $S^dW^* \hookrightarrow (W^*)^{\otimes d}$.  An element of the isomorphism class of the standard Segre-Veronese factorization structure is referred to as a Segre-Veronese factorization structure.
\end{defn}
\begin{rem}
To verify that \eqref{SV inclusion} defines a factorization structure we note that for $i\in[k]$ and generic $\ell\in\mathbb{P}(W_i)$ we have
\begin{align}\label{eq:correction1}
\varphi(\mathfrak{h})
\cap
\Sigma_{d_1+\cdots+d_{i-1}+q,\ell}^0
=
\ins_i
\left(
	(\ell^0)^{\otimes d_i}
	\otimes
	\Gamma_i
\right),
\end{align}
where $\varphi(\mathfrak{h})$ is \eqref{SV image}, $q\in[d_i]$ and $d_0$ is defined to be $0$. Note that there are at most finitely many $\ell\in\mathbb{P}(V_i)$ for which the dimension of the intersection in \eqref{eq:correction1} could be strictly larger than one, and, loosely speaking, this occurs when defining directions/tensors $\Gamma_j$, $j\neq i$, decompose at the $i$-th place. We found that
\begin{gather}\label{standard curves}
\varphi \circ \psi_{d_1+\cdots+d_{i-1}+q} (\ell) =
\ins_i
\left(
	(\ell^0)^{\otimes d_i}
	\otimes
	\Gamma_i
\right),
\hspace{.5cm} \ell\in\mathbb{P}(W_i),
\end{gather}
where $q\in[d_i]$ as above.
\end{rem}

\subsection{Compatible cones and polytopes}\label{sec:compatible-cones}

This subsection introduces and describes cones and polytopes compatible with a factorization structure.
For a thorough introduction to polyhedral geometry we refer to \cite{grunbaum1967convex,ziegler1993lectures}.

\begin{defn}\label{compatible polytope}
A full-dimensional and pointed convex polyhedral cone in $\mathfrak{h}$ is called \textit{compatible} with a factorization structure $\varphi : \mathfrak{h} \to V^*$ if its projectivised extremal rays lie on factorization curves of $\varphi$.
A polytope is called \textit{compatible} with a factorization structure $\varphi$ if it is a section of a cone compatible with $\varphi$ by an affine hyperplane.
\end{defn}

\noindent\textbf{Construction of compatible cones.} A way to construct a cone compatible with $\varphi$ is to fix points on factorization curves and 'de-projectivise' them.  
To describe this procedure rigorously, we start by fixing an arbitrary finite collection of general points $p_1,\ldots,p_n\in\mathbb{P}(\mathfrak{h})$. 
Dually, their annihilators $(p_j)^0 \subset \mathfrak{h}^*$, $j\in[n]$, give rise to the \textit{hyperplane arrangement} $\bigcup_{j=1}^n (p_j)^0$, splitting $\mathfrak{h}^*$ into a union of closed full-dimensional pointed convex polyhedral cones, which we call \textit{chambers}, which, in general, do not have $n$ facets. 
Observe that for $\sigma$ such a cone, bounded by $(p_{i_1})^0,\ldots,(p_{i_r})^0$, $r\leq n$, all lines $p_1,\ldots,p_n$ contain rays $p_1^+,\ldots,p_n^+$ which belong to the dual cone $\sigma^\vee$, since all functional $\alpha\in\sigma$ evaluate non-negatively on them, but the only extremal rays of $\sigma^\vee$ are $p_{i_1}^+,\ldots,p_{i_r}^+$. 
Recall that the dual of a full-dimensional pointed convex cone is itself a cone of the same type, and thus, if $p_{i_1},\ldots,p_{i_r}$ lie on factorization curves, then $\sigma^\vee$ is a cone compatible with the factorization structure. 
Note that if $\sigma$ is a convex union of neighbouring cones in $\mathfrak{h}^*$, it still is possible to obtain its description as above by omitting points whose annihilators pass through $\sigma$. 

\begin{example}\label{ex:arrangements}
\Cref{fig:arrangements} illustrates a choice of a chamber $\sigma$ in a hyperplane arrangement and its dual $\sigma^\vee$ in the dual line arrangement.
\Cref{fig:linearr} displays $1$-dimensional spaces $p_j = \text{span}\{e_{j-1}\}$, $j\in [3]$, and $p_4 = \text{span}\{(1,1,1)\}$, where $e_0,e_1,e_2$ is the standard basis of $\mathbb{R}^3$.
Dually, \Cref{fig:hyparr} shows the hyperplane arrangement consisting of their annihilators $(p_j)^0$, $j\in [4]$; $(p_4)^0$ depicted in purple, the others in light blue.
The choice of $\sigma$ in the hyperplane arrangement $\cup_{j=1}^4 (p_j)^0$ is indicated in green; facets of $\sigma$ are supported by hyperplanes whose normal directions are $p_1$, $p_2$ and $p_3$.
The dual cone $\sigma^\vee$ is presented again in green colour.
It consists of all functionals evaluating non-negatively on $\sigma$.
Correspondingly, its rays are generated by $e_0$, $e_1$ and $e_2$, while the ray generated by $(1,1,1)$ belongs into its interior.
	\begin{figure}
		\centering
		\begin{subfigure}[t]{0.49\textwidth}
			\centering
			\begin{tikzpicture}
			\node at (0,0) {\includegraphics[height=20em]{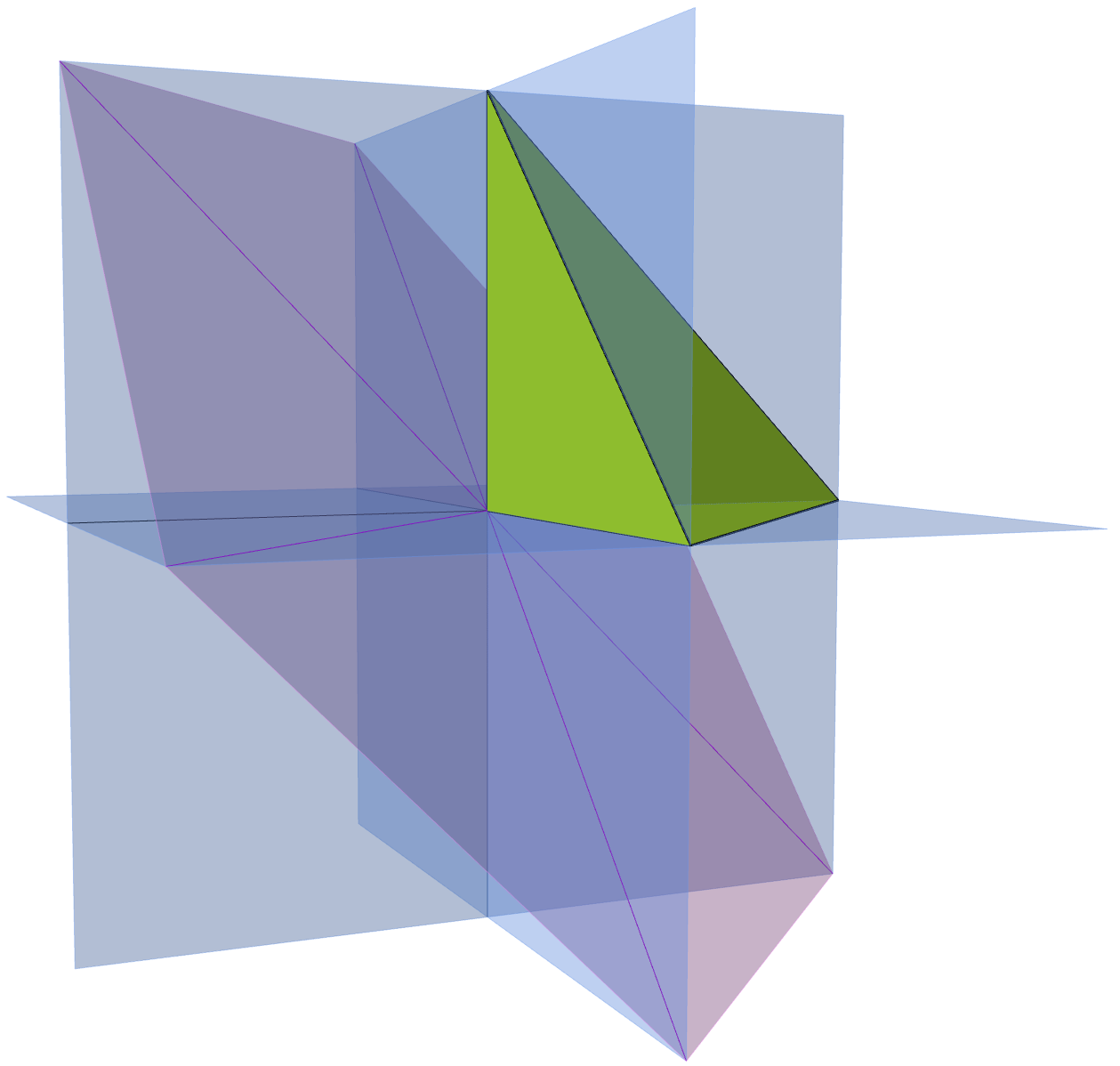}};
			\node at (0.7,2.2) {$\sigma$};
			\end{tikzpicture}
			\caption{Hyperplane arrangement}
			\label{fig:hyparr}
		\end{subfigure}
		\begin{subfigure}[t]{0.49\textwidth}
			\centering
			\raisebox{4.3em}{
				\begin{tikzpicture}
				\node at (0,0) {\includegraphics[height=15em]{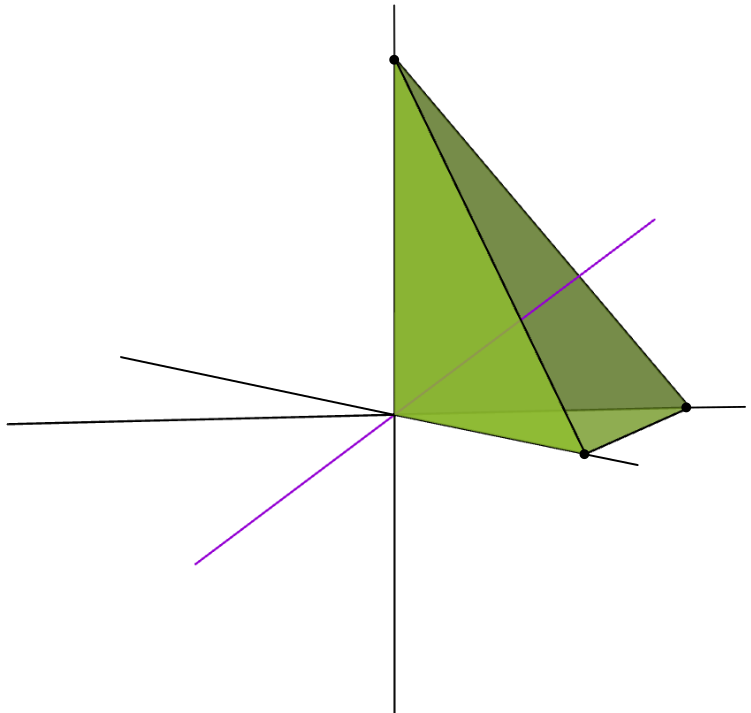}};
				\node at (2.65,-0.15) {$e_0$};
				\node at (1.75,-1.1) {$e_1$};
				\node at (-.1,2.45) {$e_2$};
				\node at (1.3,1.6) {$\sigma^\vee$};
				\end{tikzpicture}
			}
			\caption{Line arrangement}
			\label{fig:linearr}
		\end{subfigure}
		\caption{The dual arrangements from \Cref{ex:arrangements}.}
		\label{fig:arrangements}
	\end{figure}
\end{example}

\noindent\textbf{Construction of compatible polytopes.}
Projective points and their associated cones as described above are closely related to affine charts. Recall that an \textit{affine chart} on $\mathbb{P}(\h)$ is a choice of non-zero $\xi \in \h^*$ together with the induced map
\begin{align*}
U_\xi &\to \h \\
p &\mapsto \frac{p}{\langle \xi, p \rangle},
\end{align*}
where $U_\xi := \mathbb{P}(\h) \backslash \xi^0$, $\langle , \rangle$ denotes the standard contraction between elements of $\mathfrak{h}^*$ and $\mathfrak{h}$, and $p/\langle \xi, p \rangle$ is the vector given by the intersection of the $1$-dimensional space $p \subset \h$ and the affine hyperplane $\{ x \in \h \mid \langle \xi, x \rangle = 1\}$, i.e.,
\begin{gather*}
\left\langle \xi, \frac{p}{\langle \xi , p \rangle} \right\rangle = 1.
\end{gather*}
Observe that points $p_1,\ldots,p_n \in \mathbb{P}(\h)$ lie in $U_\xi$ if and only if $\xi \notin \bigcup_{j=1}^n (p_j)^0$. By fixing such $\xi$, we get, in the splitting of $\h^*$ by the hyperplane arrangement $\bigcup_{j=1}^n (p_j)^0$, a unique chamber $\sigma$ that $\xi$ belongs to. It follows that its dual is 
\begin{gather*}
\sigma^\vee = \cone \left(
	\frac{p_j}{\langle \xi , p_j \rangle} \in \h \hspace{.1cm} \bigg| \hspace{.1cm} j\in[n]
	\right).
\end{gather*}
As explained above, the extremal rays of $\sigma^\vee$ can be found among rays $\mathbb{R}_{\geq 0}\cdot p_j/\langle \xi, p_j \rangle$, $j\in[n]$. Additionally, because $\sigma^\vee$ is pointed, any vector $\xi$ from the interior $\interior(\sigma)$ determines a section $P_\xi$ of $\sigma^\vee$ by an affine hyperplane parallel to the annihilator $\xi^0$;
\begin{gather*}
P_\xi 
=
\sigma^\vee \cap \{ x \in \mathfrak{h} \hspace{.1cm} | \hspace{.1cm} \langle \xi, x \rangle = 1 \} 
=
\left\{ \frac{x}{\langle \xi, x \rangle} \in \mathfrak{h}  \ \bigg| \   x \in \sigma^\vee \right\}=
\conv \left( \frac{p_j}{\langle \xi , p_j \rangle} \ \bigg| \ j\in[n] \right).
\end{gather*}
Observe that for a fixed $\sigma$, the combinatorial type of polytopes $P_\xi$ parametrised by $\xi\in \interior(\sigma)$ is fixed (in fact, they are projectively equivalent). In sum, we found that points $p_1,\ldots,p_n \in \mathbb{P}(\mathfrak{h})$ and a choice of $\xi \in \mathfrak{h}^* \backslash \bigcup_{j=1}^n (p_j)^0$, determine the cone $\sigma$ uniquely by requiring $\xi \in \interior(\sigma)$, and therefore determine $\sigma^\vee$, its extremal rays, and $P_\xi \subset \sigma^\vee$. Assuming that the points lie on factorization curves we showed the following.
\begin{proposition}
Let $\varphi: \mathfrak{h} \to V^*$ be a factorization structure of dimension $d$ with distinct factorization curves $\psi_j:\mathbb{P}(W_j) \to \mathbb{P}(\mathfrak{h})$, $j\in[k]$, and let $\psi_j(\ell_{ji})$, $j\in[k]$, $i\in[n_j]$, $n_j \geq 0$, be a finite collection of points on the curves. If $\xi \in \mathfrak{h}^*$ does not lie in any of the annihilators $(\psi_j(\ell_{ji}))^0$, $j\in[d]$, $i\in[n_j]$, then
\begin{gather*}
P_\xi =
\conv \left( \frac{\psi_j(\ell_{ji})}{\langle \xi , \psi_j(\ell_{ji}) \rangle} \in \mathfrak{h} \hspace{.1cm} \bigg| \hspace{.1cm} j\in[k], i\in[n_j] \right)\
\end{gather*}
is a polytope compatible with the factorization structure $\varphi$. In particular, all compatible polytopes arise this way.
\end{proposition}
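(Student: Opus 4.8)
The plan is to read the statement off the two ``Construction'' paragraphs above, which already do essentially all of the work, and then to add the short argument that every compatible polytope arises this way.

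For the first assertion I would proceed as follows. Relabel the distinct points among the $\psi_j(\ell_{ji})$ as $p_1,\dots,p_n \in \mathbb{P}(\h)$; discarding repetitions changes neither the convex hull in the statement nor the property that each $p_m$ lies on a factorization curve. The hypothesis says precisely that $\xi \notin \bigcup_m (p_m)^0$, so $\xi$ lies in the interior of a unique chamber $\sigma$ of the arrangement $\bigcup_m (p_m)^0$ in $\h^*$. By the construction recalled above, $\sigma^\vee = \cone(p_m/\inner{\xi,p_m} \mid m\in[n])$, its extremal rays occur among the rays $p_m^+$, and it is a full-dimensional pointed polyhedral cone; since each $p_m$ lies on a factorization curve, $\sigma^\vee$ is a cone compatible with $\varphi$. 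Because $\xi\in\interior(\sigma)$, the affine hyperplane $\{x\in\h : \inner{\xi,x}=1\}$ (parallel to $\xi^0$) meets the pointed cone $\sigma^\vee$ in a bounded set, so $P_\xi = \sigma^\vee\cap\{x : \inner{\xi,x}=1\}$ is a polytope, and as a section of a compatible cone by an affine hyperplane it is compatible with $\varphi$; finally $P_\xi$ equals the convex hull of the points $p_m/\inner{\xi,p_m}$, because the section of a polyhedral cone by a hyperplane transverse to it is the convex hull of the slices of its extremal rays. (If the $p_m$ fail to span $\h$ one works inside their span; this is automatic in the Veronese setting of interest, where any $d+1$ of the chosen points are linearly independent.)

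For the ``in particular'' I would argue as follows. Let $P$ be a compatible polytope, written $P = C\cap H$ with $C$ a compatible cone and $H = \{x : \inner{\xi,x}=c\}$ an affine hyperplane. Boundedness of $P$ means its recession cone $C\cap\xi^0$ is $\{0\}$, so $\inner{\xi,\cdot}$ has constant strict sign on $C\setminus\{0\}$; after replacing $(\xi,c)$ by $(-\xi,-c)$ if needed, $\xi\in\interior(C^\vee)$, hence $c>0$, and rescaling $\xi$ lets me take $c=1$. Let $p_1,\dots,p_n$ be the extremal rays of $C$; they lie on factorization curves by compatibility, and $\inner{\xi,p_m}>0$ gives $\xi\notin\bigcup_m(p_m)^0$. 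The chamber of $\bigcup_m(p_m)^0$ containing $\xi$ is then cut out by all the sign conditions $\inner{\cdot,p_m}\geq 0$, so it equals $C^\vee$; its dual is $C$, and the construction applied to the points $p_1,\dots,p_n$ and the functional $\xi$ returns exactly $P_\xi = C\cap\{x : \inner{\xi,x}=1\} = P$.

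I do not expect a genuine obstacle: the statement is essentially a repackaging of the preceding construction. The only two points that require a line of care are (i) normalising an arbitrary defining hyperplane of the compatible polytope to the form $\{\inner{\xi,\cdot}=1\}$ with $\xi\in\interior(C^\vee)$, which is precisely the step where the word ``polytope'' (boundedness, as opposed to an unbounded polyhedron) is used; and (ii) recognising the chamber of the arrangement containing $\xi$ as $C^\vee$, which is just cone duality together with the fact that the hyperplanes of the arrangement are the facet-defining hyperplanes of $C^\vee$.
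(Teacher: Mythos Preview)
Your proposal is correct and matches the paper's approach: the paper does not give a separate proof block for this proposition but simply states it as a summary of the preceding ``Construction of compatible cones/polytopes'' paragraphs, which is exactly what you do for the forward direction. Your explicit argument for the ``in particular'' (normalising the hyperplane to $\{\langle\xi,\cdot\rangle=1\}$ with $\xi\in\interior(C^\vee)$ and identifying the chamber of the arrangement with $C^\vee$) spells out what the paper leaves to the reader, and is sound.
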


\begin{defn}
	A polytope compatible with the Veronese factorization structure is called a \emph{Veronese polytope}.
\end{defn}

\begin{rem}
We wish to remark that in \cite{pucek2023factorization} compatible polytopes are defined as duals of those from \Cref{compatible polytope}. Regardless of which cones and polytopes are called compatible with a factorization structure, and which are called their duals, all exhibit an explicit description and compatibility with the given factorization structure.
\end{rem}

\section{Geometry of Veronese polytopes} \label{geometry section}

This section focuses on polytopes compatible with the Veronese factorization structure. In a manner akin to cyclic polytopes, which represent a notable class of Veronese polytopes, their properties are given by a curve and feature a condition which generalizes Gale's evenness condition for cyclic polytopes. \\

\subsection{The curve explicitly}\label{sec:curve}

Recall from \Cref{SV ex} that the Veronese factorization structure $\varphi: \h \to V^*$ is given by $\h = S^dW^*$ and $V^* = (W^*)^{\otimes d}$, where $\dim W = 2$. All of its factorization curves agree, being the rational normal curve
\begin{gather*}
\psi(\ell)=(\ell^0)^{\otimes d}, \hspace{.5cm} \ell \in \mathbb{P}(W).
\end{gather*}

We wish to be more explicit and describe the curve $\psi$ in coordinates. We start by detailing choices of coordinates on the domain and target of $\psi$. To do so, we also briefly recall how a choice of a basis of a (2-dimensional) vector space induces bases on (symmetric) tensor powers of the space. Details can be found in introductory texts to multilinear algebra and tensors, for example in \cite{gorodentsev}.\\

From now on we consider a fixed basis $e_1,e_2$ of $W^*$, and let $e^1,e^2 \in W$ be the dual basis. These induce homogeneous coordinates on $\mathbb{P}(W^*)$ and $\mathbb{P}(W)$. For example, in these coordinates, a $1$-dimensional space $L \subset W^*$, which is the linear span of a vector $x e_1 + y e_2 \in W^*$, has the expression $[x:y]$, and its annihilator $L^0 \in \mathbb{P}(W)$, necessarily containing the vector $y e^1 - x e^2 \in W$, reads $[y:-x]$. We recall, since for any $r\in\mathbb{R}\backslash \{0\}$ the vector $r(xe_1 + ye_2)$ spans $L$ as well, the expression $[rx:ry]$ is also homogeneous coordinates for $L$, and thus $[x:y] = [rx:ry]$ describe the same projective point $L \in \mathbb{P}(W^*)$. This demonstrates that homogeneous coordinates are well-defined only up to multiplication by a non-zero scalar. \par
The affine chart on $\mathbb{P}(W^*)$ given by $e^1 \in W$ induces inhomogeneous coordinates; the image of $L$ in this affine chart is $(1,y/x)$ and the inhomogeneous coordinates $\mathbb{P}(W^*) \backslash (e^1)^0 \to \mathbb{R}$ are defined by $L \mapsto y/x$. The only point not covered by these coordinates is the span of $e_2$. Similarly, and by convention, $-e_2 \in W^*$ induces inhomogeneous coordinates on $\mathbb{P}(W) \backslash (e_2)^0$ in which we have $L^0 \mapsto y/x$.
 \par

Additionally, we have induced bases of the  $(d+1)$-dimensional vector spaces $\h=S^dW^*$ and $\h^*=S^dW$, being the standard basis for symmetric tensors on $W^*$ and $W$. To describe these more explicitly, recall that a permutation $\pi \in S_d$ induces the linear map $\pi: (W^*)^{\otimes d} \to (W^*)^{\otimes d}$ defined as the linear extension of the assignment 
$v_1 \otimes \cdots \otimes v_d \mapsto v_{\pi(1)} \otimes \cdots \otimes v_{\pi(d)}$ on decomposable tensors. In words, it permutes elements in the tensor product.
The basis of $S^dW^*$ then reads
\begin{gather}\label{sym tensors basis}
\epsilon_j = \frac{1}{(d-j)!} \sum_{\pi \in S_d} 
\pi 
\left( (e_2)^{\otimes j} \otimes (e_1)^{\otimes (d-j)} \right),
\hspace{1cm}
j=0,\ldots,d,
\end{gather}
and similarly for $S^dW$, which we denote by $\epsilon^j$, $j=0,\ldots,d$, since it is dual to \eqref{sym tensors basis}. 
Finally, the induced basis for $V^*=(W^*)^{\otimes d}$ consists of each summand in \eqref{sym tensors basis}, and similarly for $V=W^{\otimes d}$.\\

Now we describe $\psi$ in a chart. Let $\xi = \sum_{j=0}^d \xi_j \epsilon^j \in \h^* = S^dW$ be an affine chart and let the annihilator $\ell^0 \in \mathbb{P}(W^*)$ of $\ell \in \mathbb{P}(W)$ have homogeneous coordinates $[x:y]$. We wish to describe the assignment
\begin{align*}
\mathbb{P}(W) &\to \h \\
\ell &\mapsto \frac{\psi(\ell)}{\langle \psi(\ell), \xi \rangle},
\end{align*}
i.e., the curve $\psi$ in the affine chart $\xi$, in the coordinates described above. We find
\begin{align*}
\frac{\psi(\ell)}{\langle \psi(\ell), \xi \rangle} =
             %%%%%%%%%%%%%%%%%%%%%%%%%%%%%%
\frac{(\ell^0)^{\otimes d}}{\langle (\ell^0)^{\otimes d}, \xi \rangle} =
                          %%%%%%%%%%%%%%%%%%%%%%%%%%%%%%
\frac{(x e_1 + y e_2)^{\otimes d}}{\langle (x e_1 + y e_2)^{\otimes d}, \xi \rangle},
\end{align*}
and
\begin{align*}
(x e_1 + y e_2)^{\otimes d} = \sum_{j=0}^d x^{d-j}y^j \epsilon_j \in S^dW^*.
\end{align*}
Therefore, the curve $\psi / \langle \psi, \xi \rangle$, in homogeneous coordinates on $\mathbb{P}(W)$ and in coordinates on  $S^dW^*$ given by the basis, reads
\begin{align*}
[y:-x] \mapsto 
\frac{ (x^d, x^{d-1}y, \ldots, x y^{d-1}, y^d) }{\sum_{j=0}^d \xi_j x^{d-j} y^j},
\end{align*}
which, in the inhomogeneous coordinates on $\mathbb{P}(W) \backslash (e_2)^0$ as above, gives the curve
\begin{align}
	\R &\to \R^{d+1} \nonumber \\ \label{inhom curve}
t &\mapsto \frac{(1,t,\ldots,t^d)}{\sum_{j=0}^d \xi_j t^j}.
\end{align}
The only point of $\mathbb{P}(W)$ not captured by inhomogeneous coordinates is $(e_2)^0 = e^1 = [1: 0]$, which, under $\psi$, corresponds to $[0:\cdots : 0: 1]$.
If the latter point does not belong to the affine chart $\xi$, i.e., if $\xi_d = 0$, then \eqref{inhom curve} describes the entire curve $\psi / \langle \psi, \xi \rangle$.
Otherwise, the representation \eqref{inhom curve} covers all points of $\psi / \langle \psi, \xi \rangle$ except one, $(0,\ldots,0,1/\xi_d)$.
Observe that if $\xi = \epsilon_0$, then \eqref{inhom curve} recovers the moment curve.

\begin{example}\label{ex:ruled-surface}
\Cref{fig:ruled-surface} illustrates the curve $\psi$ in two affine charts $\xi = (1,0,0)$ and $\xi' = (-1,0,1)$.
The ruled surface (in purple) represents $\psi$ as a set of lines in $\mathbb{R}^3$ rather than a set of projective points in $\mathbb{RP}^2$, and is given by the equation $x_0 x_2 - x_1^2 = 0$.
Intersecting it with the affine hyperplane $\langle \xi, (x_0,x_1,x_2) \rangle = 1$, i.e., considering the image of $\psi$ in the affine chart $\xi$, results in the moment curve $(1,t,t^2)$ (in blue).
The affine chart $\xi'$ provides us with the curve $(1,t,t^2)/(t^2 - 1)$ (in red), which, in contrast to the moment curve, is not connected.
The orange lines represent general lines on the ruled surface $\psi$, having a unique intersection point with each of the two curves.
	\begin{figure}
		\begin{tikzpicture}
			\node at (0,0) 
			{\includegraphics[height=22em]{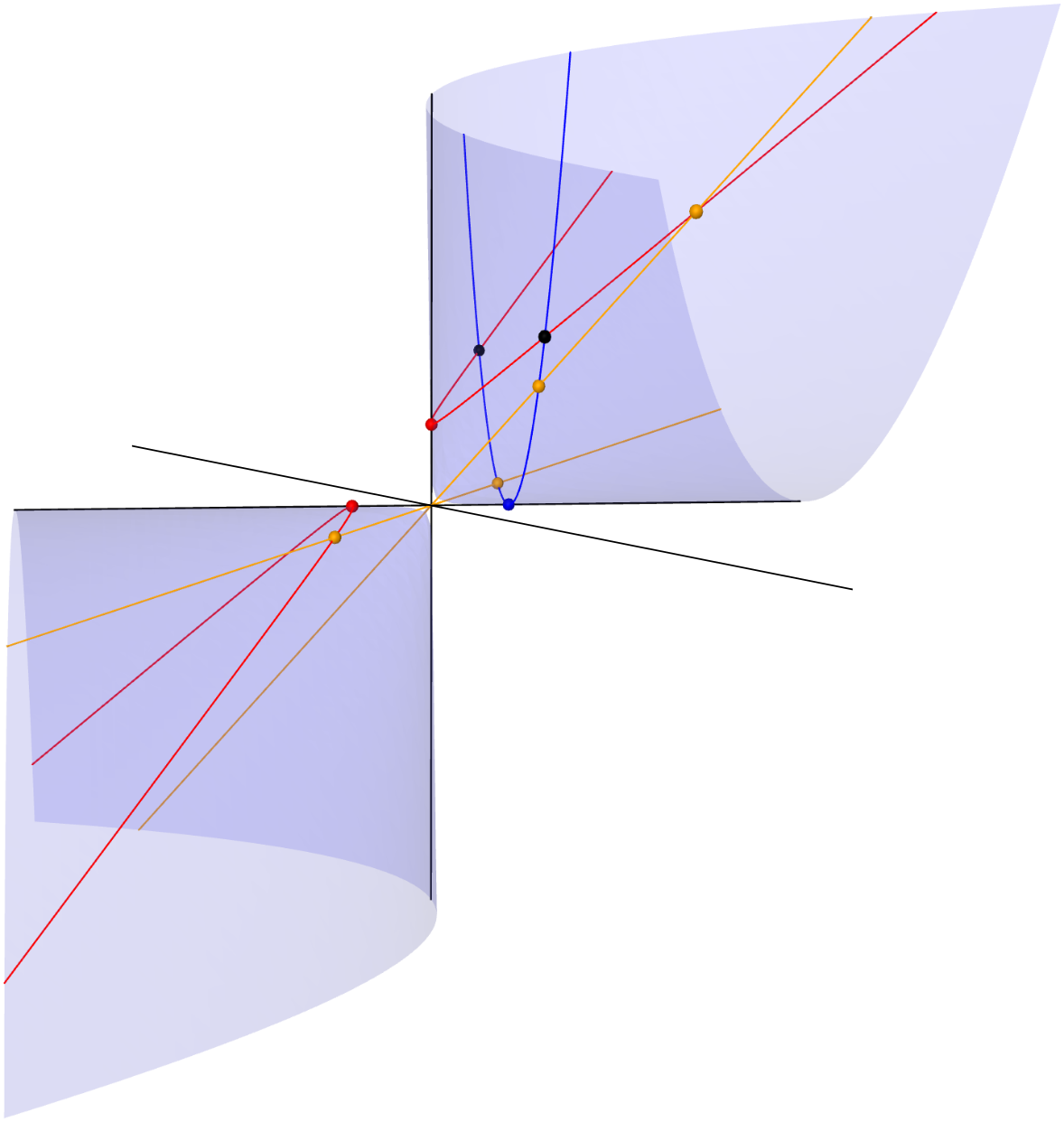}};
			\node at (2.4,0.3) {$x_0$};
			\node at (2,-0.5) {$x_1$};
			\node at (-1.2,3.3) {$x_2$};
		\end{tikzpicture}
		\caption{The curves and ruled surface from \Cref{ex:ruled-surface}}
		\label{fig:ruled-surface}
	\end{figure}
\end{example}

\begin{rem}\label{intrinsic rem}
It is exceptional that a general factorization structure admits bases of $\h$ or $\h^*$ in which computations are be transparent or illuminating. Yet, the curve $\psi$ can be expressed in terms intrinsic to factorization structure theory as follows. Recall that $\varphi^t$ is surjective, and thus for any $\xi \in \h^*$ there exists $\hat{\xi} \in V$ so that $\varphi^t \hat{\xi} = \xi$, and any other $\hat{\xi}'$ such that $\varphi^t \hat{\xi}' = \xi$ is of the form $\hat{\xi}' = \hat{\xi} + X$ for some $X \in (\varphi (\h))^0 \subset V$. By embedding $\psi$ via $\varphi$ we find
\begin{align*}
\frac{\varphi \circ \psi(\ell)}{\langle \psi(\ell), \xi \rangle} =
             %%%%%%%%%%%%%%%%%%%%%%%%%%%%%%
\frac{\varphi \circ \psi(\ell)}{\langle \psi(\ell), \varphi^t\hat{\xi} \rangle} =
             %%%%%%%%%%%%%%%%%%%%%%%%%%%%%%
\frac{\varphi \circ \psi(\ell)}{\langle \varphi \circ \psi(\ell), \hat{\xi} \rangle} =
             %%%%%%%%%%%%%%%%%%%%%%%%%%%%%%
\frac{(x,y)^{\otimes d}}{\langle (x,y)^{\otimes d}, \hat{\xi} \rangle},
\end{align*}
and observe that it does not depend on the choice of $\hat{\xi}$ since
\begin{align*}
\frac{\varphi \circ \psi(\ell)}{\langle \varphi \circ \psi(\ell), \hat{\xi} + X \rangle} =
             %%%%%%%%%%%%%%%%%%%%%%%%%%%%%%
\frac{\varphi \circ \psi(\ell)}{\langle \varphi \circ \psi(\ell), \hat{\xi} \rangle}.
\end{align*}
In inhomogeneous coordinates as above, $\varphi \circ \psi$ in the chart reads
\begin{gather*}
t \mapsto \frac{(1,t)^{\otimes d}}{\langle (1,t)^{\otimes d}, \hat{\xi} \rangle} =
\frac{(1,t)^{\otimes d}}{\sum_{i=0}^d \xi_i t^i}.
\end{gather*}
All properties of polytopes compatible with the Veronese factorization structure presented in this section are proven using this intrinsic description.
\end{rem}

Since $\psi$ is a degree $d$ curve, it follows that any $d$-dimensional Veronese polytope is \emph{simplicial}, i.e., every facet is a $(d-1)$-dimensional simplex. This has already been observed in \cite{pucek2023factorization}, here we provide a proof for completeness.

\begin{proposition}\label{prop:simplicial}
	Every facet of a $(d+1)$-dimensional cone compatible with the Veronese factorization structure of dimension $d$ contains exactly $d$ extremal rays. In particular, a $d$-polytope compatible with the Veronese factorization structure is simplicial.
\end{proposition}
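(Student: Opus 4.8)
The plan is to reduce the polytope statement to the cone statement, then prove the cone statement using the fact that the rational normal curve $\psi$ has degree $d$, together with \Cref{indep dir} which says that any $k \le d+1$ directions on $\psi$ are linearly independent. First I would recall that a $d$-polytope $P_\xi(T)$ compatible with the Veronese factorization structure is obtained as the section $\sigma^\vee \cap \{\langle \xi, x\rangle = 1\}$ of a $(d+1)$-dimensional cone $\sigma^\vee$ whose extremal rays are spanned by the points $\psi(\ell)$, $\ell \in T$ (de-projectivised in the chart $\xi$); facets of the polytope correspond bijectively to facets of the cone, and a facet of $P_\xi(T)$ is a simplex with $d$ vertices exactly when the corresponding facet of the cone contains exactly $d$ extremal rays. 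So it suffices to prove the cone statement.

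Next I would fix a facet $F$ of the cone $\sigma^\vee$, supported by a hyperplane $H = \eta^0$ for some nonzero $\eta \in \h^*$ with $\langle \eta, \cdot\rangle \ge 0$ on $\sigma^\vee$. The extremal rays lying on $F$ are exactly the $\psi(\ell)$ with $\langle \eta, \psi(\ell)\rangle = 0$, i.e.\ the directions $\ell \in T$ at which the curve $\psi$ meets the hyperplane $\eta^0$. Embedding via $\varphi$ as in \Cref{intrinsic rem}, the condition $\langle \eta, \psi(\ell)\rangle = 0$ becomes $\langle \hat\eta, (x,y)^{\otimes d}\rangle = 0$ where $\ell^0 = [x:y]$ and $\varphi^t\hat\eta = \eta$; in inhomogeneous coordinates this is the vanishing of a univariate polynomial of degree at most $d$ in $t$, which is not identically zero since $\eta \ne 0$ and the curve is nondegenerate (its points span $\h$, again by \Cref{indep dir}). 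Hence $F$ contains \emph{at most} $d$ extremal rays. For the lower bound, I would argue that a facet of a full-dimensional pointed cone in $\h \cong \R^{d+1}$ spans a $d$-dimensional subspace, hence must contain at least $d$ linearly independent extremal rays; since the extremal rays are all directions on $\psi$, any $d$ of them are linearly independent by \Cref{indep dir}, and conversely $d+1$ of them would be independent and thus could not lie on a single hyperplane — so $F$ contains \emph{exactly} $d$ extremal rays.

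Finally, I would translate back: each facet of the cone has exactly $d$ extremal rays, so each facet of $P_\xi(T) = \sigma^\vee \cap \{\langle\xi,\cdot\rangle = 1\}$ has exactly $d$ vertices, and $d$ affinely independent points span a $(d-1)$-simplex (affine independence in the hyperplane slice follows from linear independence of the corresponding rays in $\h$, since $\xi$ evaluates positively on each). Therefore $P_\xi(T)$ is simplicial.

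The main obstacle, I expect, is the bookkeeping around the lower bound: one must be careful to argue that a facet of the cone genuinely contains $d$ extremal rays of $\sigma^\vee$ (not merely that its supporting hyperplane contains $d$ points of the curve), which requires knowing that all the de-projectivised curve points are extremal rays of $\sigma^\vee$ — a point already established in the construction of compatible cones in \Cref{sec:compatible-cones} — and that the supporting hyperplane of a facet of a full-dimensional cone cannot contain more than $d$ of these points without forcing a linear dependence forbidden by \Cref{indep dir}. The degree bound and \Cref{indep dir} do all the real work; the rest is standard polyhedral geometry.
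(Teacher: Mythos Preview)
Your proposal is correct and follows essentially the same route as the paper: use the standard fact that a facet of a full-dimensional pointed cone in $\R^{d+1}$ is spanned by $d$ extremal rays (lower bound), and then bound the number of extremal rays on the facet-supporting hyperplane above by $d$ using either the degree of $\psi$ or \Cref{indep dir}. The paper's proof is terser but invokes exactly these two ingredients.

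One small correction to your ``obstacle'' paragraph: it is \emph{not} true that all de-projectivised curve points are extremal rays of $\sigma^\vee$ --- the paper explicitly notes that generating points need not be vertices. Fortunately your argument does not actually need this; what you need (and use correctly earlier) is only the converse, that every extremal ray lies on $\psi$, which is the definition of a compatible cone. The lower bound then comes purely from polyhedral geometry (a $d$-dimensional face is generated by its extremal rays, hence contains at least $d$ of them), not from any statement about which curve points are extremal.
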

\begin{proof}
	Any facet-supporting hyperplane $H$ of a $(d+1)$-dimensional cone is spanned by $d$ linearly independent extremal rays, and because we work with a compatible cone, these lie on $1$-dimensional spaces $\psi(\ell_j)$, $j\in[d]$, for some $\ell_j \in \mathbb{P}(W)$. We need to show that $H$ does not contain any other extremal rays. 
	This follows from the fact that the projective hyperplane $\mathbb{P}(H) \subset \mathbb{P}(\mathfrak{h})$ intersects the degree $d$ curve $\psi$ in at most $d$ points.
	Alternatively, this follows from \Cref{indep dir}, since any $d+1$ points are linearly independent, and thus do not lie in a common hyperplane.
\end{proof}

It is known that the convex hull of $n \geq d+1$ points on a $d$-order curve is combinatorially equivalent to the $d$-dimensional cyclic polytope $C_d(n)$ on $n$ vertices \cite{cordovil00_cyclicpolytopesoriented}.
This result is used and cited repeatedly in the literature,
however, the definition of a $d$-order curve frequently omits
continuity and injectivity, or at least non-constancy, properties on which proofs in these publications rely.

\begin{defn}
Let $I \subset \mathbb{R}$ be a non-trivial interval and $V$ an affine space. A continuous injective map $\alpha: I \to V$ is a \textit{$d$-order curve} if every affine hyperplane in $V$ intersects $\im\alpha$ in at most $d$ points.
\end{defn}

%\cite{derry56_convexhullssimple} curves are defined on a circle or $[0,1]$

We now characterize $\xi \in \mathfrak{h}^*$ for which $\psi / \langle \psi, \xi \rangle$ is a $d$-order curve, implying that associated Veronese polytopes are combinatorially equivalent to cyclic polytopes.

\begin{proposition} \label{d order curve}
Let $\psi: \mathbb{P}(W) \to \mathbb{P}(S^dW^*)$ be the factorization curve of the Veronese factorization structure.
The curve $\psi/ \langle \psi, \xi \rangle$ is a connected curve, and hence a $d$-order curve, if and only if $\xi \in \mathfrak{h}^*$ lies on the curve
\begin{gather}\label{xi curve}
[s:t] \mapsto \left[ (-1)^d {d \choose 0} s^d: (-1)^{d-1} {d \choose 1} s^dt: \cdots: -{d \choose d-1} st^{d-1}: {d \choose 0} t^d \right].
\end{gather}
\end{proposition}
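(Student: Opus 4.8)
The plan is to reduce both sides of the claimed equivalence --- that $\psi/\langle\psi,\xi\rangle$ be connected (hence a $d$-order curve) and that $\xi$ lie on \eqref{xi curve} --- to one and the same condition on the binary form
\[
Q_\xi(x,y)=\langle (xe_1+ye_2)^{\otimes d},\xi\rangle=\sum_{j=0}^{d}\xi_j\,x^{d-j}y^{j},
\]
which by \Cref{intrinsic rem} is exactly the homogenisation of the denominator $q_\xi(t)=\sum_j\xi_j t^j$ of the curve \eqref{inhom curve}, and then to match the two reformulations.

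\emph{Algebraic side.} First I would unwind the definition of \eqref{xi curve}: $\xi$ lies on it iff $\xi_j=\lambda\,(-1)^{d-j}\binom{d}{j}s^{d-j}t^{j}$ for all $j$ and some $\lambda\neq 0$, $[s:t]\in\mathbb{P}^{1}$. Multiplying out, this is equivalent to $Q_\xi(x,y)=\lambda\,(ty-sx)^{d}$, i.e.\ to $Q_\xi$ being a nonzero scalar multiple of the $d$-th power of a real linear form --- equivalently, to $q_\xi$ being, up to a nonzero constant, a constant, $t^{d}$, or $(t-r)^{d}$ with $r\in\R$. Intrinsically this just says $\xi$ is proportional to $\eta^{\otimes d}$ for some $\eta\in W$, i.e.\ a point of the rational normal curve in $\mathbb{P}(\h^{*})$; unsurprising, since \eqref{xi curve} is a parametrisation of exactly this curve.

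\emph{Geometric side.} The curve $\psi/\langle\psi,\xi\rangle$ is defined precisely on $\mathbb{P}(W)\setminus\psi^{-1}(\xi^{0})$, and since $\psi$ embeds $\mathbb{P}(W)$ as the degree-$d$ rational normal curve while $\xi^{0}$ is a hyperplane not containing it, $\psi^{-1}(\xi^{0})$ is exactly the real zero locus of $Q_\xi$, finite of size $\le d$ (by \Cref{indep dir}, as in the proof of \Cref{prop:simplicial}). As $\mathbb{P}(W)\cong\mathbb{S}^{1}$, the domain is all of $\mathbb{S}^{1}$ when $Q_\xi$ has no real root and a disjoint union of $k\ge 1$ open arcs when $Q_\xi$ has $k$ distinct real roots. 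On its domain the map $\psi/\langle\psi,\xi\rangle$ is continuous and injective, and every affine hyperplane of the chart $\{\langle\xi,\cdot\rangle=1\}$ meets the image in at most $d$ points (a projective hyperplane not containing $\im\psi$ meets it in at most $d$ points). Hence the image is that of a $d$-order curve $\alpha\colon I\to V$ precisely when the domain is homeomorphic to an interval, i.e.\ precisely when $Q_\xi$ has a single distinct real root; and it is connected precisely when $Q_\xi$ has at most one such, the no-real-root case yielding a loop $\cong\mathbb{S}^{1}$ that is connected but ruled out by the interval requirement in the definition of a $d$-order curve.

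\emph{Matching, and the main obstacle.} It then remains to match "$Q_\xi$ has a single distinct real root" with "$Q_\xi$ is a $d$-th power of a linear form". One direction is immediate: if $Q_\xi=cL^{d}$ then its only real root is the root of $L$, of multiplicity $d$, so the domain is $\mathbb{P}(W)$ minus a single point, hence connected and $\cong\R$, and $\psi/\langle\psi,\xi\rangle$ is a $d$-order curve. The converse is the step I expect to be hardest: one must show that a lone real zero of $Q_\xi$ already forces it to have multiplicity $d$. The natural route reads $\xi^{0}\cap\im\psi$ as an effective divisor of degree $d$ on $\im\psi\cong\mathbb{P}^{1}$: if it is supported at a single point $\ell_{0}$ it must equal $d\,\ell_{0}$, so $Q_\xi$ vanishes to order $d$ at $\ell_{0}$ and equals $cL^{d}$ with $L$ cutting out $\ell_{0}$, placing $\xi$ on \eqref{xi curve} via the algebraic side. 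The delicate point, where I would concentrate the work, is the multiplicity bookkeeping over $\R$ --- in particular ruling out that a single real zero of $Q_\xi$ coexists with real irreducible quadratic factors.
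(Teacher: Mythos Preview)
Your reduction is exactly right, and it matches the paper's own strategy: both sides come down to conditions on the real zero locus of the binary form $Q_\xi(x,y)=\sum_j\xi_jx^{d-j}y^j$. The obstacle you isolate at the end is not merely delicate --- it is genuine, and your divisor argument cannot repair it, because the step you are trying to prove is false. A single real zero of $Q_\xi$ \emph{can} coexist with irreducible quadratic factors: for $d=3$ and $\xi=(0,1,0,1)$ one has $Q_\xi(x,y)=y(x^2+y^2)$ with the unique real root $[1:0]\in\mathbb{P}^1$, so the domain of $\psi/\langle\psi,\xi\rangle$ is $\mathbb{S}^1$ minus a single point, the curve is connected and a bona fide $3$-order curve, yet $\xi$ does not lie on \eqref{xi curve} since $Q_\xi$ is not a cube of a linear form. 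Likewise, for even $d$ the zero-root case occurs: with $d=2$ and $\xi=(1,0,1)$ one gets $Q_\xi=x^2+y^2$, a connected closed curve, and $\xi$ again off \eqref{xi curve}.

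The paper's own proof contains precisely this slip: it asserts that $\sum_j\xi_j(y/x)^j=0$ has no real solution only when $\xi_1=\cdots=\xi_d=0$ and exactly one only when the polynomial equals $c(y/x-t_0)^d$, but both ``only if'' directions ignore real irreducible quadratic factors. What survives is the implication from right to left (if $Q_\xi=cL^d$ then the curve is connected and a $d$-order curve), together with the corrected equivalence that $\psi/\langle\psi,\xi\rangle$ is a $d$-order curve if and only if $Q_\xi$ has exactly one distinct real zero in $\mathbb{P}^1$ --- a condition strictly weaker than membership in \eqref{xi curve} for every $d\ge 3$. So your analysis is sharper than the paper's: you located the gap, and the correct conclusion is that the proposition needs to be amended rather than proved as stated.
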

\begin{proof}
Since the image of $\psi:\mathbb{P}(W) \to \mathbb{P}(\mathfrak{h})$ is a homeomorphic to a circle, the curve $\psi/ \langle \psi, \xi \rangle$ is connected if and only if $\im \psi \cap \xi^0$ is connected.
Because $| \im \psi \cap \xi^0 | \leq \deg \psi = d$, $\im \psi \cap \xi^0$ is connected if and only if the cardinality is zero or one.
Using the coordinates as above, the curve $\psi$ reads
\begin{gather*}
\psi([y:-x]) = [x^d: x^{d-1} y: \cdots : x y^{d-1}: y^d].
\end{gather*}
We look for all non-zero $\xi \in \mathfrak{h}^*$ such that
\begin{gather}\label{yay}
\sum_{j=0}^d
\xi_j x^{d-j} y^j = 0
\end{gather}
has zero or one solution, i.e., the solution set in $\mathbb{P}(W)$ is at most one point. 
To perform the computation we use two charts on $\mathbb{P}(W)$: one is
$\{ [y:-x] \mid x\neq 0 \} \to \mathbb{R}$, $[y:-x] \mapsto y/x$, given by $-e_2 \in W^*$, and the other is $\{ [y:-x] \mid y\neq 0 \} \to \mathbb{R}$, $[y:-x] \mapsto x/y$, given by $-e_1 \in W^*$. \par
If $x\neq0$, \eqref{yay} reads
\begin{gather}\label{yay1}
\sum_{j=0}^d
\xi_j (y/x)^j = 0,
\end{gather}
which has zero solutions if and only if $\xi_1 = \dots =\xi_d = 0$, and one solution if and only if there exists $c,t_0 \in \mathbb{R}$, $c \neq 0$, such that \eqref{yay1} can be written as $c(y/x - t_0)^d = 0$, i.e.,
\begin{gather}\label{xi on the curve 1}
\xi_{j} = c {d \choose j} (-t_0)^{d-j}, \hspace{.2cm} j = 0, \ldots, d.
\end{gather}
In the latter case the solution is $x=u$ and $y=ut_0$, where $u \in \mathbb{R} \setminus \{0\}$. \par
When $y\neq0$, the equation \eqref{yay} reads
\begin{gather*}
\sum_{j=0}^d
\xi_j (x/y)^j = 0,
\end{gather*}
which has zero solutions if and only if  $\xi_0 = \dots =\xi_{d-1} = 0$, and one solution if and only if
\begin{gather}\label{xi on the curve 2}
\xi_{d-j} = c' {d \choose d-j} (-s_0)^{d-j}, \hspace{.2cm} j = 0, \ldots, d
\end{gather}
for some $s_0, c' \in \mathbb{R}$, $c'\neq0$.
The solution is $x=us_0$ and $y=u$, $u \in \mathbb{R} \setminus \{0\}$. \par
We observe that if $\xi$ does not provide a solution in the chart $x\neq0$ or $y\neq0$, then it gives a unique solution $[x:y] = [0:1]$ or $[x:y] = [1:0]$ in the chart $y\neq0$ or $x\neq0$, respectively.
At the intersection of these charts, i.e., when $xy\neq0$, the unique solution $[x:y] = [1:t_0]$ determined by $\eqref{xi on the curve 1}$ corresponds to the unique solution $[x:y] = [s_0:1]$ of \eqref{xi on the curve 2} for $s_0=1/t_0$. Therefore, a solution always exists, and one can observe that this data define the projective curve in $\mathbb{P}(\mathfrak{h}^*)$ given by \eqref{xi curve}, being the rational normal curve, whose points parametrise $\xi$ such that $|\psi \cap \xi^0| = 1$.
Any such $\xi$ transforms \eqref{yay} into
\begin{gather}
(yt - xs)^d = 0,
\end{gather}
and the unique solution is $[x:y] = [t:s]$.
\end{proof}

\subsection{Generalised Gale condition}

We fix a general $d$-dimensional Veronese polytope $P_\xi$. Hence
\begin{gather}\label{polytope Veronese}
P_\xi =
\conv \left( \frac{\psi(\ell_j)}{\langle \xi, \psi(\ell_j) \rangle} \in S^dW^* \hspace{.1cm} \bigg| \hspace{.1cm} j\in [n] \right),
\end{gather}
where $\xi \in \h^* = S^dW$ is an affine chart, and $n\geq d+1$. Recall that it is not guaranteed that a \textit{generating point} $\psi(\ell_j) / \langle \xi, \psi(\ell_j) \rangle$ is a vertex for each $j\in [n]$. Despite this, we derive a generalisation of the Gale evenness condition which detects facets of $P_\xi$, and subsequently identifies vertices. \\

We wish to expresses $P_\xi$ using inhomogeneous coordinates as in \eqref{inhom curve}, but it may happen that one generating point cannot be parametrised this way, namely if it lies on the 1-dimensional space generated by $(0,\ldots,0,1)$ or, equivalently, if there exist $j\in[n]$ such that $\ell_j^0$ has homogeneous coordinates $[1:0]$.
This can be avoided by choosing the second basis vector $e_2$ of $W^*$ so that it does not lie on any of directions $\ell_j^0 \in \mathbb{P}(W^*)$, $j \in [n]$.
Indeed, such a choice is always possible since we consider only finitely many points, and it ensures that for $\ell_j^0$ with homogeneous coordinates $[x_j:y_j]$ we have $x_j \neq 0$, $j \in [n]$.
Note that we could also choose a basis of $W^*$ so that $x_j \neq 0$ and $y_j \neq 0$. Therefore, $P_\xi$ in these coordinates is
\begin{gather}\label{Veronese polytope}
P_\xi(T) =
\conv \left(
	\frac{\moment(t_j)}{\sum_{i=0}^{d} \xi_i t_j^i} \in \mathbb{R}^{d+1}
	\hspace{.1cm} \bigg| \hspace{.1cm}
	j\in [n]
\right),
\end{gather}
where
$\moment(t) = (1,t,\dots,t^d)$ and
 $T=\{ t_j \in \mathbb{R} \mid j\in[n] \}$ denotes the set of inhomogeneous coordinates of $\ell_j^0$, $j\in [n]$. Observe that for $\xi=\epsilon_0$, i.e., $\xi_i = \delta^i_0$, 
we recover definition of cyclic polytopes, where $\delta^i_j$ denotes the Kronecker delta. 
On occasions, we do not distinguish between $P_\xi$ from \eqref{polytope Veronese} and $P_\xi(T)$ from \eqref{Veronese polytope}, although, strictly speaking, they are subsets of distinct spaces. This is justified by the fact that $P_\xi(T)$ is merely a coordinate representation of $P_\xi$ via a linear isomorphism, and thus $P_\xi$ and $P_\xi(T)$ share the same properties. \\

\begin{rem}
We wish to remark that every Veronese polytope $\poly(T)$ (as in \eqref{Veronese polytope}) is defined with respect to a possibly different set of inhomogeneous coordinates.
For each polytope $P_\xi$ (as in \eqref{polytope Veronese}), these coordinates are chosen so that they are defined on all generating points of $P_\xi$.
This implies subtleties when considering families of Veronese polytopes.
More precisely, let $\ell(P_\xi) := \{ \ell_j \in \mathbb{P}(W) \mid j \in [n] \}$ be the set of directions defining the generating points of $P_\xi$ from \eqref{polytope Veronese}, and let $I$ be a (possibly infinite) family of Veronese polytopes in the sense of \eqref{polytope Veronese}.
If the set $\mathbb{P}(W) \backslash \bigcup_{P \in I} \ell(P)$ is non-empty, then common inhomogeneous coordinates for all polytopes $P \in I$ can be chosen by fixing any of its elements as the second basis vector $e_2$ of $W^*$.
\end{rem}

The following remark complements the affine point of view from above by a projective one.

\begin{rem}\label{proj pic Gale}
Observe that the Veronese polytope $P_\xi$ induces the following two sets of points on the curve $\psi$:
$\{\psi(\ell_j) \mid j\in[n]\}$,
and $\{\psi(\ell) \mid \psi(\ell) \subset \xi^0 \ \exists \ell \in \mathbb{P}(W) \}$. The latter has cardinality $r$ at most $d$, since it consists of intersection points of the degree $d$ curve $\psi$ with the hyperplane $\xi^0$. Using the bijectivity of $\psi$ we transfer these sets onto $\mathbb{P}(W)$, being (topologically, smoothly, algebraically) isomorphic with the circle $\mathbb{S}^1$, thus obtaining $\mathbb{S}^1$ with $n+r$ marked points. In particular, the former set of $n$ points corresponds to $\ell_j \in \mathbb{P}(W)$, $j\in [n]$. Since these $r$ points in the affine chart $\xi$ correspond to points of $\psi$ at infinity, one can interpret this as $\mathbb{S}^1$ being divided into $r$ arcs, on which we have altogether $n$ points. When we use inhomogeneous coordinates on $\mathbb{P}(W)$, there is another special point on $\mathbb{S}^1$, namely the unique point which is not parametrised by inhomogeneous coordinates -- usually called a point at infinity as well. The choice of basis for $W$ implies that this point is distinct from $\ell_j$, $j\in[n]$, however it can be at infinity with respect to $\xi$.
\end{rem}

Now we derive a generalised Gale evenness condition, characterising which generating points form a facet. Since by \Cref{indep dir} any $d$ distinct points on $\psi$ are linearly independent, any $d$ generating points determine a unique hyperplane. Geometrically, to decide if any such is a facet-supporting hyperplane, it is enough to understand if all generating points of $P_\xi$ lie on one of its sides, thus implying that the whole polytope lies on the same side. Analytically, we need to fix a normal vector of the hyperplane, compute its contraction with all generating points, and decide if the contractions have the same sign. \par
As detailed in \cite{pucek2023factorization}, the shape of the normal direction follows naturally from the factorization structure theory. Here, we only verify that $\varphi^t \otimes_{j\in J} \ell_j$ is the normal direction to the hyperplane $H$ given by $\psi_j(\ell_j)$, $j\in J$, where $J = \{j_1,\dots,j_d\} \subset \{1,\ldots,n\}$ is of cardinality $d$. 
We start by choosing a vector on $\varphi^t \otimes_{j\in J} \ell_j$ to be
\begin{gather*}
\frac{\varphi^t \bigotimes_{j\in J} \ell_j}{\langle \varphi^t \bigotimes_{j\in J} \ell_j , g \rangle},
\end{gather*}
where $g \in \h$ is the auxiliary affine chart on $\mathbb{P}(\h^*)$ determined by $\varphi g = (e_2)^{\otimes d}$. We use inhomogeneous coordinates as in \Cref{intrinsic rem} to find the contraction of the normal vector with a general point on the curve
\begin{align}\label{Gale 1 curve}
\lambda_{\xi,S}(t) &:=
\left \langle \frac{\varphi^t \bigotimes_{j\in J} \ell_j}{\langle \varphi^t \bigotimes_{j\in J} \ell_j , g \rangle}, \hspace{.1cm}
\frac{\psi(\ell)}{\langle \psi(\ell), \xi \rangle} \right \rangle = \nonumber
          %%%%%%%%%%%%%%%%%%%%%%%%%%%%%%%
\left \langle \bigotimes_{j\in J} (-t_j,1), \hspace{.1cm}
\frac{(1,t)^{\otimes d}}{\sum_{i=0}^{d} \xi_i t^i} \right \rangle \\ 
           %%%%%%%%%%%%%%%%%%%%%%%%%%%%%%%%%
&=\frac{\prod_{j \in J} (t - t_j)}{\sum_{i=0}^{d} \xi_i t^i} =
          %%%%%%%%%%%%%%%%%%%%%%%%%%%%%%%%%
\frac{1}{\sum_{i=0}^{d} \xi_i t^i}
\sum_{i=0}^d (-1)^i t^{d-i} \sigma_i(t_{j_1},\ldots,t_{j_d})
\end{align}
where $\sigma_i$ is the $i$-th elementary symmetric polynomial, $\sigma_0:=1$, and 
$S = \{t_{j} \mid j \in J\}$. The expression \eqref{Gale 1 curve} is a rational function in $t$ vanishing at point of $S$, thereby showing that $\varphi^t \otimes_{j\in J} \ell_j$ is indeed the normal direction to $H$. Note that the normal vector can be expanded in the basis of $\h^* = S^dW$ either directly or it can be read off of \eqref{Gale 1 curve}; 
its $i$-th component is $(-1)^{d-i}\sigma_{d-i}(t_{j_1},\ldots,t_{j_d})$, $i=0,\ldots,d$. 
Moreover, the signs of $\lambda_{\xi,S}(t)$, 
%\eqref{Gale 1 curve} evaluated at points $t=t_j$,
$t \in T \backslash S$, determine if the hyperplane given by $S$ is facet-supporting. Since Veronese polytopes are simplicial, we showed the generalised Gale evenness condition.

\begin{thm}[Gale condition for Veronese polytopes]\label{geometric Gale}
	Let $P_\xi(T)$ be a Veronese polytope as in \eqref{Veronese polytope}, and let $S \subset T$ be of cardinality $d$. The unique hyperplane determined by the points
	\[
		\frac{\moment(t)}{\sum_{i=0}^d \xi_i t^i} \ , \quad t \in S,
	\]
	 is a facet-supporting hyperplane of $P_\xi(T)$ if and only if the values of $\lambda_{\xi,S}(t)$ at points $t \in T \backslash S$ have equal signs.
	 Moreover, this hyperplane does not contain any other vertices.
\end{thm}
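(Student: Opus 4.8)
The plan is to verify that the vector $N := \varphi^t \bigotimes_{j\in J}\ell_j$ (expanded in the basis of $\h^* = S^dW$ with $i$-th component $(-1)^{d-i}\sigma_{d-i}(t_{j_1},\dots,t_{j_d})$) is genuinely normal to the affine hyperplane through the $d$ generating points indexed by $S$, and then to translate ``all generating points lie on one side'' into a statement about the sign of $\lambda_{\xi,S}$. First I would observe that \eqref{Gale 1 curve} exhibits $\lambda_{\xi,S}$ as a rational function in $t$ whose numerator $\prod_{j\in J}(t-t_j)$ vanishes exactly at the points of $S$; evaluating $\lambda_{\xi,S}(t_j) = \langle N/\langle N,g\rangle,\ \nu_d(t_j)/\sum_i\xi_i t_j^i\rangle = 0$ for each $j\in J$ shows that $N$ contracts to zero with all $d$ generating points of $S$, hence is normal to the hyperplane $H$ they span. (By \Cref{indep dir} these $d$ points are affinely independent in $\R^{d+1}$ modulo the common affine chart, so $H$ is unique and the computation is not vacuous.) I should also note $N\neq 0$, since $\sigma_0 = 1$ forces the top component to be $\pm 1$.

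Next I would argue the equivalence: the hyperplane $H$ is facet-supporting for $P_\xi(T) = \conv(\nu_d(t)/\sum_i\xi_i t^i \mid t\in T)$ if and only if all generating points lie in one of the two closed halfspaces bounded by $H$, i.e.\ if and only if the linear functional $\langle N,\cdot\rangle$ has constant sign (allowing zero) on the generating set $\{\nu_d(t)/\sum_i\xi_i t^i \mid t\in T\}$. On the points of $S$ this functional vanishes, so the condition reduces to $\lambda_{\xi,S}(t)$ having equal signs for $t\in T\setminus S$. Since $P_\xi(T)$ is full-dimensional of dimension $d$ (we fixed it to be a $d$-dimensional Veronese polytope), and since $H$ contains at least $d$ affinely independent points, $H\cap P_\xi(T)$ is a genuine facet (a $(d-1)$-simplex) precisely when this sign condition holds — here I invoke \Cref{prop:simplicial} to know every facet has exactly $d$ extremal rays, so no degeneracy occurs where $H$ meets the polytope only in a lower-dimensional face.

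For the final clause — that a facet-supporting $H$ contains no vertex of $P_\xi(T)$ beyond the $d$ points of $S$ — I would use the key fact from \Cref{indep dir} (equivalently, that $\psi$ has degree $d$): the projective hyperplane $\mathbb{P}(H)\subset\mathbb{P}(\h)$ meets $\im\psi$ in at most $d$ points. Concretely, $\langle N,\cdot\rangle$ vanishes on a generating point $\nu_d(t)/\sum_i\xi_i t^i$ if and only if $\lambda_{\xi,S}(t) = 0$, i.e.\ $\prod_{j\in J}(t - t_j) = 0$, i.e.\ $t\in S$; the denominator $\sum_i \xi_i t^i$ is nonzero at every $t\in T$ by the defining condition $\xi\notin\bigcup_{t\in T}(\nu_d(t))^0$. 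Hence the only generating points on $H$ are those indexed by $S$, and in particular no other \emph{vertex} of $P_\xi(T)$ lies on $H$. I do not anticipate a serious obstacle here; the one point requiring a little care is checking that the contraction identity in \eqref{Gale 1 curve} is legitimate — that the intrinsic expression $\langle\bigotimes_{j\in J}(-t_j,1),\ (1,t)^{\otimes d}\rangle = \prod_{j\in J}(t-t_j)$ is exactly the pairing of $\varphi^t\bigotimes_{j\in J}\ell_j$ with $\varphi\circ\psi(\ell)$, which follows from $\langle\varphi^t(\cdot),\cdot\rangle = \langle\cdot,\varphi(\cdot)\rangle$ and the rank-one decomposition $(1,t)^{\otimes d}$ as computed in \Cref{intrinsic rem}. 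This is the part I would write out most carefully, since everything else is formal.
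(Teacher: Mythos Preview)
Your proposal is correct and follows essentially the same approach as the paper: identify $N=\varphi^t\bigotimes_{j\in J}\ell_j$ as the normal by checking that $\lambda_{\xi,S}$ vanishes on $S$ via \eqref{Gale 1 curve}, then read off the facet condition as constancy of sign on $T\setminus S$, and deduce the ``no other vertices'' clause from the fact that the numerator $\prod_{j\in J}(t-t_j)$ has only the points of $S$ as roots (equivalently, from \Cref{prop:simplicial}). Your write-up is in fact somewhat more explicit than the paper's --- you note $N\neq 0$ via $\sigma_0=1$ and spell out why the denominator is nonzero on $T$ --- but the argument is the same.
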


As observed before, $\poly(T)$ is the cyclic polytope $C_d(n)$ for $\xi=\epsilon_0$. In this case \Cref{geometric Gale} turns into Gale's evenness condition for cyclic polytopes as follows.

\begin{corollary}[{\cite{gale1963neighborly}}]
	Let $\xi = \epsilon_0$. The unique hyperplane determined by $\moment(t_j), t_j \in S$ is a facet supporting-hyperplane of the cyclic polytope $C_d(n) = P_{\epsilon_0}(T)$ if and only if the polynomial $\lambda_{\epsilon_0,S}$ has constant sign on $T\setminus S$, or, equivalently, if and only if any two elements of $T \setminus S$ are separated by an even number of elements from $S$ in the sequence $\{t_1,\dots,t_n\}$.
\end{corollary}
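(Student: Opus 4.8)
The plan is to obtain the corollary as a direct specialisation of \Cref{geometric Gale} followed by an elementary sign-counting argument. First I would substitute $\xi = \epsilon_0$, so that $\xi_i = \delta^i_0$ and the denominator $\sum_{i=0}^d \xi_i t^i$ appearing in \eqref{Gale 1 curve} is identically $1$. Consequently $\lambda_{\epsilon_0,S}(t) = \prod_{s \in S}(t-s)$ is an honest polynomial of degree $d$, and \Cref{geometric Gale} says that the hyperplane spanned by the points $\moment(t)$, $t \in S$, is facet-supporting for $C_d(n) = P_{\epsilon_0}(T)$ precisely when $\prod_{s\in S}(t-s)$ takes the same sign at all $t \in T \setminus S$. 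This is the first equivalence.

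For the second equivalence I would list $T = \{t_1 < \dots < t_n\}$ in increasing order (the labelling of $T$ may be chosen this way) and use that $p(t) := \prod_{s\in S}(t-s)$ has exactly $d$ distinct real roots, namely the elements of $S$, all of them simple. Hence $p$ changes sign when $t$ passes through an element of $S$ and at no other point. Therefore, for $t_a, t_b \in T \setminus S$ with $t_a < t_b$, the values $p(t_a)$ and $p(t_b)$ have the same sign if and only if the number of elements of $S$ lying strictly between $t_a$ and $t_b$ in $\R$ is even. Since $S \subseteq T$ and $t_a,t_b \notin S$, this count agrees with the number of elements of $S$ that separate $t_a$ from $t_b$ in the finite sequence $t_1,\dots,t_n$. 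Demanding that $p$ have constant sign on all of $T \setminus S$ amounts to demanding this parity condition for every pair of elements of $T\setminus S$, which is Gale's evenness condition.

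The only point needing a modicum of care — hardly an obstacle — is the bookkeeping that "elements of $S$ strictly between $t_a$ and $t_b$ in $\R$" coincide with "elements of $S$ between $t_a$ and $t_b$ in the sequence $\{t_1,\dots,t_n\}$", which is immediate once $T$ is written in increasing order and $S \subseteq T$. Beyond \Cref{geometric Gale} and the standard fact that a real polynomial changes sign exactly at its simple real roots, no further input is required.
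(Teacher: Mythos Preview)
Your proposal is correct and matches the paper's treatment: the corollary is stated without its own proof, as an immediate specialisation of \Cref{geometric Gale}, and your argument supplies exactly the obvious details (setting $\xi=\epsilon_0$ kills the denominator, then the sign of $\prod_{s\in S}(t-s)$ on $T\setminus S$ is governed by the parity of the number of roots separating any two points).
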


\begin{rem}
Observe that $\lambda_{\xi,S}$ from \eqref{Gale 1 curve} extends to a real-valued map on $\mathbb{P}(W) \backslash \{ \ell \in \mathbb{P}(W) \mid \psi(\ell) \subset \xi^0 \}$, whose zeros are exactly at $\ell_j$, $j\in J$. In fact, the domain of this map is the circle without at most $d$ points (cf. \Cref{proj pic Gale}). 
\end{rem}

In the light of \Cref{geometric Gale}, we make the following definition.

\begin{defn}
Let $T = \{ t_1,\ldots,t_n \} \subset \mathbb{R}$ be of cardinality $n$, and let $P_\xi(T)$ be a Veronese $d$-polytope.
We say that $S \subset T$ of cardinality $d$ \textit{corresponds to a facet} of $P_\xi(T)$ if $\nu_d(t) / \langle \xi, \nu_d(t) \rangle$, $t \in S$, are vertices of a facet of $P_\xi(T)$.
\end{defn} 

\subsection{Signed and $\sigma$-Decompositions}
\label{sec:decompositions}

In this section we make several observations regarding the Gale condition for Veronese polytopes from \Cref{geometric Gale}. These observations will allow us to derive a combinatorial description of the facial structure of Veronese polytopes in \Cref{sec:combinatorics}.

Let $\bigcup_{j=1}^n ( \psi(\ell_j) )^0 = \bigcup_{j=1}^n ( \moment(t_j) )^0 $ be the hyperplane arrangement as described in \Cref{sec:compatible-cones}, and $\sigma$ its chamber associated with $\poly(T)$, in particular $\xi \in \interior(\sigma)$. As we observed previously, the combinatorial type of $\poly(T)$ is entirely determined by the chamber $\sigma$ and does not depend on its individual elements such as $\xi$; the natural map sending a point $v$ to its ray $\R_{\geq 0} \cdot  v$ provides an isomorphism of face lattices of $\poly(T)$ and $\sigma^\vee$.
This implies that for $S$ corresponding to a facet of $\poly(T)$, the values
$\sgn \circ \lambda_{\xi,S}(t_j)$, $j\in [n]$, remain the same for any $\xi \in \interior(\sigma)$.
Denoting polynomials
\begin{gather}\label{eq:polynomials}
p_S(t) := \prod_{t_j \in S} (t - t_j)
\hspace{.2cm}
\text{ and }
\hspace{.2cm}
q_\xi(t) := \sum_{i=0}^d \xi_i t^i = \langle \xi, \nu_d(t) \rangle,
\end{gather}
the rational function $t \mapsto \lambda_{\xi,S}(t)$ reads $\lambda_{\xi,S}(t) = p_S(t)/q_\xi(t)$, and 
$$\sgn \circ \lambda_{\xi,S} = (\sgn \circ p_S) \cdot (\sgn \circ q_\xi).$$
For $S \subset T$ fixed, since $\sgn \circ \lambda_{\xi,S}(t_j)$ depends on $\sigma$ only, and $\sgn \circ p_S$ does not depend on $\xi$ at all,
it must be that $\sgn \circ q_\xi (t_j)$, $j\in [n]$, depends on $\sigma$ only, i.e., is independent on the exact choice $\xi \in \interior(\sigma)$.
We can thus define
\begin{gather*}\label{eq:sign-cone}
\sgn_\sigma(t_j) := \sgn \circ q_\xi (t_j).
\end{gather*}
Because $q_\xi$ is a polynomial of degree at most $d$, we have an induced decomposition of $T$ into a union of $k+1$ discrete intervals $I_1,\dots,I_k$, $k \leq d$, such that
\begin{gather*}
\forall s,t \in I_i:
\hspace{.2cm}
\sgn_\sigma(s) = \sgn_\sigma(t),
\end{gather*}
and for $i=1,\ldots,k$
\begin{gather*}
\forall t \in I_i
\hspace{.2cm}
\forall s \in I_{i+1}:
\hspace{.1cm}
\sgn_\sigma(s) = - \sgn_\sigma(t)
\end{gather*}
holds.
In words, intervals $I_i$ carry a sign determined by $\sigma$ which alters with $i$.
	We wish to relate to the theory of oriented matroids by noting that the vector $(\sgn_{\sigma}(t_1),\dots,\sgn_\sigma(t_n))$ is the \emph{signed covector} corresponding to $\sigma$ in the oriented matroid associated to the hyperplane arrangement $\bigcup_{j=1}^n ( \moment(t_j) )^0$ .

\begin{defn}\label{def:sigma-decomposition}
A \textit{signed decomposition (of length at most $d$)} of $T=\{t_1 < \cdots < t_n\}$ is a decomposition of $T$ into a disjoint union of discrete intervals $I_i$, $i \in [k+1]$, for some integer $k$, $0 \leq k \leq d$, such that each interval is equipped with a sign $\sgn(I_i) \in \{ \pm 1 \}$ and for $i\in [k]$ holds $\sgn(I_i) = - \sgn (I_{i+1})$.
When such a decomposition is induced by a chamber $\sigma$ we call it \textit{signed $\sigma$-decomposition}.
\end{defn}

Note that if we were working with $-\sigma$ instead of $\sigma$, we would find the same set decomposition of $T$ but intervals $I_j$ would carry the opposite sign.
An example of a signed decomposition is depicted in \Cref{fig:signed-decomposition}. A more concrete example is given as follows.

\begin{figure}
	\begin{tikzpicture}
\draw[gray] (0.7,0) -- (12.3,0);
\draw[thick] (3.5,0.3) -- (3.5,-0.3)
				     (5.5,0.3) -- (5.5,-0.3)
				     (9.5,0.3) -- (9.5,-0.3)
foreach \s in{1,...,12}{
	(\s,0) node[fill,black,circle, inner sep=1.5pt]{}
	node[anchor=north]{$t_{\s}$}
};
\node[anchor=south] at (1,0) {$+$};
\node[anchor=south] at (2,0) {$+$};
\node[anchor=south] at (3,0) {$+$};
\node[anchor=south] at (4,0) {$-$};
\node[anchor=south] at (5,0) {$-$};
\node[anchor=south] at (6,0) {$+$};
\node[anchor=south] at (7,0) {$+$};
\node[anchor=south] at (8,0) {$+$};
\node[anchor=south] at (9,0) {$+$};
\node[anchor=south] at (10,0) {$-$};
\node[anchor=south] at (11,0) {$-$};
\node[anchor=south] at (12,0) {$-$};
\draw [decorate, decoration = {brace},thick] (0.7,0.6) --  (3.3,0.6);
\node at (2,1.1) {$I_1$};
\draw [decorate, decoration = {brace},thick] (3.7,0.6) --  (5.3,0.6);
\node at (4.5,1.1) {$I_2$};
\draw [decorate, decoration = {brace},thick] (5.7,0.6) --  (9.3,0.6);
\node at (7.5,1.1) {$I_3$};
\draw [decorate, decoration = {brace},thick] (9.7,0.6) --  (12.3,0.6);
\node at (11,1.1) {$I_4$};
\node at (-.1,0.25) {$\sgn(I_i)=$};
\end{tikzpicture}
	\caption{A signed decomposition of $12$ points on the real line, decomposed into $4$ discrete intervals $I_1, I_2, I_3, I_4$.}
	\label{fig:signed-decomposition}
\end{figure}
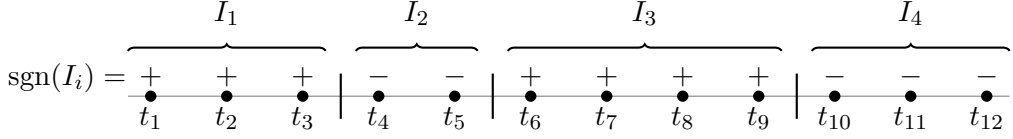

\begin{example}[unit directions]\label{ex:unit-direction}
	We consider polytopes $\poly(T)$ for $\xi = e_r$, where $e_0,$$\dots,$$e_d$ is the standard basis of $\R^{d+1}$, and $T = \{t_1,\dots,t_n\}$ such that $ t_1 < \dots < t_{m} < 0 < t_{m+1} < \dots < t_{n}$.
	For $r\in \{0,1\dots,d\}$, the curve $\psi / \inner{\psi,e_r}$
	is parametrised by  	
	\[
	\frac{\moment(t)}{\inner{\moment(t), e_r}} = \left(\frac{1}{t^r}, \frac{1}{t^{r-1}}, \dots, \frac{1}{t}, 1, t, t^2, \dots, t^{d-r} \right), \quad t\in \R.
	\]
	For $P_{e_r}(T)$ we have $T = I_1 \cup I_2 = \{t_1,\dots,t_m\} \cup \{t_{m+1}, \dots, t_n\}$ if $r$ is odd, and $T = I_1 = \{t_1,\dots,t_n\}$ if $r$ is even. 
\end{example}

\begin{rem}\label{induced cones}
The fact that 
$\sgn \circ \lambda_{\xi, S}(t)$, 
$t \in T$, depends on $\sigma$ only can be alternatively showed as follows. Recall that $\sgn \circ p_S$ is independent of $\xi$, we thus turn our attention to $\sgn \circ q_\xi$. Let $\xi \in \interior(\sigma)$ be fixed, hence the values
\begin{gather}\label{sign conditions}
\sgn \langle \xi, \nu_d(t) \rangle,
\hspace{1cm} t \in T
\end{gather}
are fixed.
We observe that the set of $\hat{\xi}$ having the same sign as $\xi$ in \eqref{sign conditions} are those $\hat{\xi}$ which lie on positive or negative side (depending on the fixed signs) of hyperplanes given by normal vectors $\nu_d(t)$, $t \in T$, and $\xi$ belongs to this set.
Of course, such a set is a chamber $\mathcal{C}$ in the hyperplane arrangement $\bigcup_{j=1}^n (\psi(\ell_j))^0$, equivalently in $\bigcup_{t \in T} (\nu_d(t))^0$, and it must be $\mathcal{C} = \sigma$, since $\xi \in \mathcal{C}$.
\end{rem}

On the other hand, for any signed $\sigma$-decomposition there exists a polynomial $q_\xi$, $\xi \in \h^*$, of degree at most $d$ such that its sign on $I_i$ is exactly $\sgn(I_i)$. 
More explicitly, such a polynomial can be constructed as follows. Let $s_i = \tfrac{1}{2} \left(\max(I_i) + \min(I_{i+1})\right)$. We can choose $q_\xi(t) = \prod_{i=1}^{k} (t - s_i)$, where $\xi$ is the vector of coefficients in the expansion of $q_\xi$, being elementary symmetric polynomials in $s_1,\dots,s_k$.
Now, viewing $q_\xi(t)$ as the contraction $\langle \xi, \nu_d(t) \rangle$, \Cref{induced cones} implies that $\xi$ belongs to a unique chamber $\sigma$. 
Therefore, the signed $\sigma$-decomposition induces the chamber $\sigma$. Similarly, a $\sigma$-decomposition induces the union $\sigma \cup \{ -\sigma \}$. \Cref{induced cones} shows that these correspondences are inverse to each other.
We thus proved the following.
\begin{thm}
There is a bijective correspondence between chambers $\sigma$ of the hyperplane arrangement $\bigcup_{j=1}^n (\psi(\ell_j))^0$ and signed $\sigma$-decompositions of $T$ of length at most $d$ induced by the above constructions. Furthermore, there is a bijective correspondence between $\sigma \cup \{ -\sigma \}$ for any chamber $\sigma$ and $\sigma$-decomposition of $T$.
\end{thm}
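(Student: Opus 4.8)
The plan is to establish the two correspondences separately and then verify they are mutually inverse, which is essentially the content already assembled in the surrounding discussion. First I would fix notation: given a chamber $\sigma$ of $\bigcup_{j=1}^n (\psi(\ell_j))^0 = \bigcup_{t\in T}(\nu_d(t))^0$, the map $\sigma \mapsto D(\sigma)$ sends $\sigma$ to the signed decomposition of $T$ whose underlying set-partition into discrete intervals $I_1,\dots,I_{k+1}$ is determined by the requirement that $\sgn_\sigma$ be constant on each $I_i$ and flip between consecutive intervals, with $\sgn(I_i) := \sgn_\sigma(\min I_i)$. That this is well-defined (i.e.\ that $\sgn \circ q_\xi(t_j)$ depends only on $\sigma$, not on the representative $\xi \in \interior(\sigma)$) is exactly the content of \Cref{induced cones}, and that it genuinely is a signed decomposition of length at most $d$ follows because $q_\xi$ has degree at most $d$ and therefore at most $d$ sign changes along $t_1 < \cdots < t_n$.

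Next I would construct the reverse map. Given an abstract signed decomposition $\mathcal{D} = (I_1,\dots,I_{k+1}; \sgn)$ of length $k \le d$, pick separating points $s_i = \tfrac12(\max I_i + \min I_{i+1})$ for $i\in[k]$ and set $q(t) = \varepsilon\prod_{i=1}^k (t-s_i)$, where $\varepsilon \in \{\pm1\}$ is chosen so that $\sgn q = \sgn(I_{k+1})$ on $I_{k+1}$ (which then forces the correct alternating signs on all $I_i$ since each $s_i$ separates exactly $I_i$ from $I_{i+1}$ and no point of $T$ lies in any $(\,\min I_i, \max I_i)$-crossing interval). Reading the coefficients of $q$ as a covector $\xi\in\h^*$ via $q(t) = \langle \xi, \nu_d(t)\rangle$, \Cref{induced cones} tells us $\xi$ lies in a unique chamber, which I call $\sigma(\mathcal{D})$; crucially this does not depend on the choice of the $s_i$ within their allowed ranges, because any two such choices yield polynomials with the same sign pattern on $T$, hence the same chamber by \Cref{induced cones}.

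Then I would check $D \circ \sigma = \mathrm{id}$ and $\sigma \circ D = \mathrm{id}$. For the first: starting from $\mathcal{D}$, producing $\xi = \xi(\mathcal{D})$, and forming $D(\sigma(\mathcal{D}))$, by construction $\sgn q_\xi$ realizes exactly the sign pattern $\sgn(I_i)$ on each $I_i$, so the induced set-partition and signs coincide with $\mathcal{D}$. For the second: starting from a chamber $\sigma$ with representative $\xi$, forming $\mathcal D(\sigma)$, and then building $\xi(\mathcal D(\sigma))$ from separating points, the resulting polynomial has the same sign pattern on $T$ as $q_\xi$, so by \Cref{induced cones} it lies in the same chamber, i.e.\ $\sigma(D(\sigma)) = \sigma$. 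This gives the first bijection. For the second statement, I would observe that replacing $\sigma$ by $-\sigma$ leaves the set-partition of $T$ unchanged but negates all signs $\sgn(I_i)$, so forgetting signs (passing from signed decompositions to $\sigma$-decompositions) is exactly the quotient by this $\pm$-involution; hence the restriction of the first bijection descends to a bijection between the two-element sets $\{\sigma, -\sigma\}$ and unsigned $\sigma$-decompositions.

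The main obstacle is the well-definedness and independence-of-choices issues rather than any hard computation: one must argue carefully that the chamber assigned to an abstract signed decomposition does not depend on which separators $s_i$ are chosen, and that the sign pattern really is invariant under passing between representatives of a chamber. Both reduce to \Cref{induced cones}, which characterizes a chamber precisely by the sign vector $(\sgn\langle\xi,\nu_d(t)\rangle)_{t\in T}$; once this is invoked, everything else is bookkeeping about sign changes of a degree-$\le d$ univariate polynomial along an increasing finite sequence. A minor point to state cleanly is that a degree-$\le d$ polynomial can realize \emph{any} prescribed alternating sign pattern with at most $d$ blocks on a finite point set (via the explicit product construction), which shows the correspondence is surjective onto all signed decompositions of length at most $d$, not merely the ones a priori known to come from chambers.
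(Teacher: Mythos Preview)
Your proposal is correct and follows essentially the same route as the paper: both directions of the correspondence are constructed exactly as in the discussion preceding the theorem (the chamber-to-decomposition map via $\sgn_\sigma$, and the decomposition-to-chamber map via the explicit polynomial $\prod_i(t-s_i)$ with $s_i$ the midpoints), and mutual inversion is reduced to \Cref{induced cones}. You are somewhat more careful than the paper in checking independence of the choice of separators $s_i$ and in introducing the sign $\varepsilon$ explicitly, but these are refinements of the same argument rather than a different approach.
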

Observe that any signed $\sigma$-decomposition of $T$ of length at most $d$ is induced by a chamber. In particular, there are $2 \sum_{j=0}^d {n-1 \choose j}$ chambers, being the maximal number of chambers a central hyperplane arrangement on $n$ hyperplanes in a $(d+1)$-dimensional space can have.

\begin{corollary}\label{geometric Gale c}
In the setting of \Cref{geometric Gale}, $S$ corresponds to a facet if and only if there exists $c \in \{ \pm 1 \}$ such that for each $i\in[k+1]$ and for each $t \in I_i \backslash S$ we have $(\sgn \circ p_S) (t) = c \cdot \sgn(I_i)$.
\end{corollary}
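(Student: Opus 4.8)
The plan is to read the corollary off \Cref{geometric Gale} directly, using only the multiplicativity of the sign function and the description of $\sgn\circ q_\xi$ in terms of the intervals $I_i$. By \Cref{geometric Gale}, $S$ corresponds to a facet of $\poly(T)$ if and only if the values $\lambda_{\xi,S}(t)$, $t \in T\setminus S$, all share the same sign. First I would record that these signs genuinely lie in $\{\pm 1\}$: since $p_S$ vanishes precisely on $S$ and $q_\xi(t) = \langle \xi, \moment(t)\rangle \neq 0$ for every $t \in T$ (because $\xi$ lies in no annihilator $(\moment(t))^0$), the quotient $\lambda_{\xi,S}(t) = p_S(t)/q_\xi(t)$ is nonzero for every $t \in T\setminus S$, so $\sgn\circ\lambda_{\xi,S}$ is well defined and $\{\pm1\}$-valued on $T\setminus S$.

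Next I would split the index set of the Gale condition along the signed $\sigma$-decomposition, writing $T\setminus S = \bigcup_{i=1}^{k+1}(I_i\setminus S)$ as a disjoint union. On each block, using the factorization $\sgn\circ\lambda_{\xi,S} = (\sgn\circ p_S)\cdot(\sgn\circ q_\xi)$ together with the fact, established just before \Cref{def:sigma-decomposition}, that $\sgn\circ q_\xi(t) = \sgn_\sigma(t) = \sgn(I_i)$ is constant on $I_i$, one obtains $\sgn\circ\lambda_{\xi,S}(t) = (\sgn\circ p_S)(t)\cdot\sgn(I_i)$ for all $t\in I_i\setminus S$. Hence the condition ``all values $\lambda_{\xi,S}(t)$, $t\in T\setminus S$, have the same sign $c\in\{\pm1\}$'' becomes, verbatim, ``$(\sgn\circ p_S)(t)\cdot\sgn(I_i) = c$ for every $i\in[k+1]$ and every $t\in I_i\setminus S$''.

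Finally I would put this into the shape of the statement: multiplying both sides of $(\sgn\circ p_S)(t)\cdot\sgn(I_i) = c$ by $\sgn(I_i)\in\{\pm1\}$ and using $\sgn(I_i)^2 = 1$ gives the equivalent equality $(\sgn\circ p_S)(t) = c\cdot\sgn(I_i)$, and this rewriting is reversible, so the two conditions are equivalent. Combined with \Cref{geometric Gale}, this is exactly the claim. There is no real obstacle here: the corollary is a one-line reformulation of \Cref{geometric Gale}, and the only points deserving a word of care are the non-vanishing of $\lambda_{\xi,S}$ on $T\setminus S$ (so the signs are defined) and the constancy of $\sgn\circ q_\xi$ with value $\sgn(I_i)$ on each interval $I_i$ — both already available in the preceding discussion.
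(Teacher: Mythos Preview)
Your argument is correct and is exactly the intended one: the paper states this corollary without a separate proof, since it follows immediately from \Cref{geometric Gale} together with the factorization $\sgn\circ\lambda_{\xi,S}=(\sgn\circ p_S)\cdot(\sgn\circ q_\xi)$ and the constancy of $\sgn\circ q_\xi$ on each $I_i$ established in the preceding discussion. Your extra care about non-vanishing of $\lambda_{\xi,S}$ on $T\setminus S$ is a harmless clarification.
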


\begin{rem}\label{proj pic Gale 2}
In terms of $\mathbb{S}^1$ from \Cref{proj pic Gale}, the $\sigma$-decomposition substitutes the $r$ points corresponding to infinities with respect to the chart $\xi$, and introduces a decomposition on $\mathbb{S}^1$ in an obvious way. However, because we work in a chart on $\mathbb{P}(W)$, we cannot say at this point, if points on $\mathbb{S}^1$ corresponding to $I_1$ and $I_{k+1}$ are really members of distinct arcs or if they belong together.
\end{rem}

\begin{rem}\label{P sigma}
When we are interested only in combinatorial properties of a Veronese polytope $P_\xi(T)$, we make use of the notation $P_\sigma(T)$, since the combinatorial type depends on the chamber $\sigma$ only. Fixing any $\xi \in \interior(\sigma)$, one recovers a geometric realization $P_\xi(T)$ as the section of $\sigma^\vee$ by the affine chart $\xi$.
\end{rem}

\section{Combinatorics of Veronese polytopes}
\label{sec:combinatorics}

This section gives multiple combinatorial characterizations of Veronese polytopes, resulting in a bijection between combinatorial types and  isomorphism classes of cyclically ordered sets with marked points.
These combinatorial descriptions are further used to identify combinatorial types which are realisable as Veronese polytopes, and give an explicit formula for the number of facets. 

Throughout this section, we assume that $T = \{t_1 < \dots < t_n\} \subset \R$ is an ordered set of cardinality $n$, and that its subsets are equipped with the inherited ordering. In general, $S \subset T$ is to be considered as a set of cardinality $d$.

\subsection{Combinatorial characterizations of facets}\label{sec:facets-line}
This subsection provides two equivalent combinatorial formulations of the generalised Gale evenness condition from \Cref{geometric Gale} (\Cref{geometric Gale c}), and includes an example of a Veronese polytope which is not cyclic.

\begin{defn}[$\sigma$-parity alternating sequence]\label{def:pa-sequences}
	Let $T$ be equipped with a signed $\sigma$-decomposition, inducing the partition $T = \bigcup_{j=1}^{k+1} I_j$. A subsequence $l_1< l_2 < \dots< l_{m}$ of $\{1<\dots<n\}$ (or $\{0<1<\dots<n\}$) is \emph{parity alternating} (p.a.) if $l_i$ and $l_{i+1}$ have distinct parities for all $i\in[m-1]$. The sequence is \emph{$\sigma$-parity alternating} ($\sigma$-p.a.) if for all $i\in[m-1]$ the following condition is satisfied:
	\[
		\sgn_\sigma(t_{l_i})  = \sgn_\sigma(t_{l_{i+1}}) 
		\text{ if and only if } l_i, l_{i+1} \text{ have different parities}.
	\]
\end{defn}

\begin{rem}
Observe that the function $\sgn_\sigma$ in \Cref{def:pa-sequences} is used only to determine if both $t_{l_i}$ and $t_{l_{i+1}}$ belong simultaneously either to $I_{\text{even}}^{\sigma}$ or to $I_{\text{odd}}^{\sigma}$, where
\begin{gather*}\label{internal}
	I_{\text{even}}^{\sigma} := \bigcup_{\substack{i=1 \\ i \text{ even}}}^{k+1} I_i
	\hspace{.2cm}
	\text{ and }
	\hspace{.2cm}
	I_{\text{odd}}^{\sigma} := \bigcup_{\substack{i=1 \\ i \text{ odd}}}^{k+1} I_i \ .
\end{gather*}
Furthermore, because $-\sigma$ induces the signed $(-\sigma)$-decomposition with the same set-theoretical partition $T = \bigcup_{j=1}^{k+1} I_j$ as $\sigma$, 
a sequence is $\sigma$-p.a. if and only if it is $(-\sigma)$-p.a.
Therefore, the definition of a $\sigma$-parity alternating sequence depends only on $\sigma \cup (- \sigma )$, or, equivalently, on the sets $I_{\text{even}}^{\sigma} = I_{\text{even}}^{-\sigma} $ and $I_{\text{odd}}^{\sigma} = I_{\text{odd}}^{-\sigma} $. 
\end{rem}

\begin{thm}\label{facets Veronese}\label{cor:bijection-facets-xi-pa}
	$S$ corresponds to a facet of $\polyc(T)$ if and only if $L   =  [n] \setminus \{j \mid t_j \in S\} = \{l_1 < \dots < l_{n-d}\}$ is a $\sigma$-parity alternating sequence.
\end{thm}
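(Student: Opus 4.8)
The plan is to translate the criterion of \Cref{geometric Gale c} into a condition on consecutive elements of the complement $L=[n]\setminus\{j \mid t_j\in S\}$. Write $L=\{l_1<\dots<l_{n-d}\}$ and set $l_0=0$, $l_{n-d+1}=n+1$ by convention, so that the elements of $S$ occupy exactly the indices strictly between consecutive $l_i$'s. The first observation is that $\sgn\circ p_S(t_{l_i})$ is governed purely by how many elements of $S$ lie to the right of $t_{l_i}$: indeed $p_S(t)=\prod_{t_j\in S}(t-t_j)$ evaluated at $t=t_{l_i}$ has sign $(-1)^{\#\{j \mid t_j\in S,\ j>l_i\}}$. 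Since between $t_{l_i}$ and $t_{l_{i+1}}$ there are exactly $l_{i+1}-l_i-1$ elements of $S$, the signs $\sgn\circ p_S(t_{l_i})$ and $\sgn\circ p_S(t_{l_{i+1}})$ agree if and only if $l_{i+1}-l_i-1$ is even, i.e.\ if and only if $l_i$ and $l_{i+1}$ have the \emph{same} parity; they differ precisely when $l_i,l_{i+1}$ have different parities.

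Next I would unwind \Cref{geometric Gale c}. That corollary says $S$ corresponds to a facet iff there is a global sign $c\in\{\pm1\}$ with $\sgn\circ p_S(t)=c\cdot\sgn(I_i)$ for every $i$ and every $t\in I_i\setminus S$. Applying this to the points $t_{l_i}$, $t_{l_{i+1}}$ (which lie in $I\setminus S$ since $L$ is the complement of the index set of $S$): if $t_{l_i}\in I_a$ and $t_{l_{i+1}}\in I_b$, the facet condition forces $\sgn\circ p_S(t_{l_i})=c\,\sgn(I_a)$ and $\sgn\circ p_S(t_{l_{i+1}})=c\,\sgn(I_b)$, hence the two $p_S$-signs agree iff $\sgn(I_a)=\sgn(I_b)$, i.e.\ iff $\sgn_\sigma(t_{l_i})=\sgn_\sigma(t_{l_{i+1}})$. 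Combining with the parity computation of the previous paragraph: for every consecutive pair in $L$,
\[
\sgn_\sigma(t_{l_i})=\sgn_\sigma(t_{l_{i+1}})\ \Longleftrightarrow\ l_i,\ l_{i+1}\ \text{have different parities},
\]
which is exactly the defining condition of a $\sigma$-parity alternating sequence (\Cref{def:pa-sequences}). This gives the ``only if'' direction immediately.

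For the ``if'' direction I would run the argument in reverse: assuming $L$ is $\sigma$-p.a., define $c:=\sgn\circ p_S(t_{l_1})\cdot\sgn_\sigma(t_{l_1})$ and show by induction along the chain $l_1<l_2<\dots<l_{n-d}$ that $\sgn\circ p_S(t_{l_i})=c\cdot\sgn_\sigma(t_{l_i})$ for all $i$; the inductive step is precisely the equivalence displayed above together with the parity computation. Since every element of $T\setminus S$ equals some $t_{l_i}$, this verifies the hypothesis of \Cref{geometric Gale c} with that constant $c$, so $S$ corresponds to a facet. One should also dispose of the trivial edge case $n=d+1$ (then $L$ is a single element, vacuously $\sigma$-p.a., and $S$ always corresponds to a facet of the simplex), and note the harmless ambiguity in the convention $\{0<1<\dots<n\}$ versus $\{1<\dots<n\}$, which only shifts all parities simultaneously and hence does not affect the ``different parities'' relation between consecutive terms.

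The only mild subtlety — the ``main obstacle'' such as it is — is bookkeeping the parity of $\#\{j\in S\text{-indices} : j>l_i\}$ correctly and making sure the reindexing between the subset $S\subset T$, its index set $\{j\mid t_j\in S\}\subset[n]$, and the complement $L$ is handled consistently; once that is set up, everything reduces to the two sign equivalences above. No genuinely hard estimate or new idea is needed beyond \Cref{geometric Gale c}.
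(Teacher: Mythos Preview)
Your approach is essentially the same as the paper's: both hinge on the observation that $\sgn p_S(t_{l_i})=\sgn p_S(t_{l_{i+1}})$ is governed by the number $l_{i+1}-l_i-1$ of roots of $p_S$ in between, and then combine this with \Cref{geometric Gale c}. There is, however, a parity slip in your first paragraph. You correctly deduce that the two $p_S$-signs agree iff $l_{i+1}-l_i-1$ is even, but then translate this as ``$l_i$ and $l_{i+1}$ have the same parity''; in fact $l_{i+1}-l_i-1$ even means $l_{i+1}-l_i$ is odd, i.e.\ $l_i$ and $l_{i+1}$ have \emph{different} parities (which is also the paper's statement of the observation). Your displayed equivalence at the end of the second paragraph is nonetheless the correct one, so as written the two paragraphs contradict each other; this is a local bookkeeping error, not a structural flaw, and once it is fixed your induction for the ``if'' direction goes through and is a slightly more streamlined variant of the paper's interval-by-interval verification.
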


\begin{proof}
	Observe that for
	\begin{gather*}%\label{p}
		p_S(t) = \prod_{t_j \in S} (t - t_j),
	\end{gather*}
	the signs of $p_S(t_{l_i})$ and $p_S(t_{l_{i+1}})$ are equal if and only if $l_i$ and $l_{i+1}$ have distinct parities.\par
	
	If $S$ corresponds to a facet, then by \Cref{geometric Gale} the function $\sgn \circ \lambda_{\xi,S}$, depending on $\interior(\sigma)$ only, is constant on $T\setminus S$. Thus, we have
	 $\sgn_\sigma(t_{l_i}) = \sgn_\sigma(t_{l_{i+1}})$ if and only if $\sgn \circ p_S (t_{l_i}) = \sgn \circ p_S (t_{l_{i+1}})$.
	By the above observation, this is true if and only if $l_i$ and $l_{i+1}$ have distinct parities. This shows that $L$ is $\sigma$-parity alternating. \par
	
	To prove the other implication, let $L$ be a $\sigma$-parity alternating sequence. We start by showing that $p_S$ has constant sign on $L \cap I_i$, where $L = \bigcup_{i=1}^{k+1} L \cap I_i$.
	Cases when $| L \cap I_i | \leq 1$ are obvious.
	Let $| L \cap I_i | \geq 2$ and let $t_{l_r}, t_{l_{r+1}} \in L \cap I_i$ be two consecutive points.
	Since $L$ is $\sigma$-parity alternating, $l_r$ and $l_{r+1}$ have different parities, which by the above observation, is equivalent with
	$\sgn \circ p_S (t_{l_r}) = \sgn \circ p_S (t_{l_{r+1}})$, and thus $p_S$ has constant sign on $L \cap I_i$. \par
	Let $I_i$ and $I_{i+p}$, $p\geq1$, be intervals such that $| L \cap I_i | \neq 0$, $| L \cap I_{i+p} | \neq 0$ and such that for all $j=1,\ldots,p-1$ we have $| L \cap I_j | = 0$.
	 Let $t_{l_r} = \max I_i$ and $t_{l_{r+1}} = \min I_{i+p}$.
	 By \Cref{geometric Gale c} it remains to show that, up to a global sign $c \in \{ \pm 1 \}$, for each $t \in I_i \backslash S, i \in [k+1]$ we have $(\sgn \circ p_S) (t) = c \cdot \sgn(I_i)$.
	 Now, because $\sgn \circ p_S$ is constant on any interval $L \cap I_i$, we find that
	 $\sgn \circ p_S|_{I_i}$ and $\sgn \circ p_S|_{I_{i+p}}$ are equal if and only if $\sgn \circ p_S(t_{l_r})$ and $\sgn \circ p_S(t_{l_{r+1}})$ are equal,
	 and by the above observation this is equivalent with $l_r$ and $l_{r+1}$ having distinct parities.
	 Because $L$ is $\sigma$-parity alternating, the latter is equivalent with $\sgn_\sigma(t_{l_r})$ and $\sgn_\sigma(t_{l_{r+1}})$ being equal, which, by definition of $t_{l_r}$ and $t_{l_{r+1}}$ and by definition of $\sgn_\sigma$, is equivalent with $\sgn(I_i)$ and $\sgn(I_{i+p})$ being equal.
	 We showed that $\sgn \circ p_S|_{I_i}$ and $\sgn \circ p_S|_{I_{i+p}}$ are equal if and only if $\sgn(I_i)$ and $\sgn(I_{i+p})$ are equal, which proves the claim.
\end{proof}

Recall that for $\xi = (1,0,\dots,0)$, the Veronese polytope $P_\xi(T)$ is a cyclic polytope. In this case, a $\xi$-p.a. sequence is a parity alternating sequence, and \Cref{cor:bijection-facets-xi-pa} recovers the classical Gale evenness condition, which states that $l_{i+1} - l_{i} - 1$ is an even number. It is well-known that cyclic polytopes maximize the number of faces in each dimension. The following example describes a Veronese polytope which has strictly fewer facets than corresponding cyclic polytope.

\begin{example}\label{ex:facets}
	We consider the $4$-dimensional Veronese polytope $\poly(T)$ with $T = \{t_1,\dots,t_7\} = \{-3,-2,-1,1,2,3,4\}$ and $\xi = (0,-1,0,0)$. Then $T = I_1 \cup I_2$ with $I_1 = \{t_1,t_2,t_3\}$ and $I_2 = \{t_4,t_5,t_6,t_7\}$ according to the sign of $q_\xi(t_j) = \sum_{i=0}^4 \xi_i t_j^i = -t_j$. We visualize these points on the real line as follows:
	%\vspace{1em}
	\begin{center}
		\begin{tikzpicture}[scale=0.8]
			\draw[gray] (-3.5, 0) -- (4.5, 0);
			\draw[thick] (0, -0.3) -- (0, 0.3);
			\draw foreach \s in{-3,-2,-1,1,2,3,4}{
				(\s,0) node[fill,black,circle, inner sep=1.3pt]{}
			};
			\node at (-5,0.35) {$\sgn(q_\xi(t_j)) = $};
			\node[anchor=north] at (-3,0) {$t_1$};
			\node[anchor=north] at (-2,0) {$t_2$};
			\node[anchor=north] at (-1,0) {$t_3$};
			\node[anchor=north] at (1,0) {$t_4$};
			\node[anchor=north] at (2,0) {$t_5$};
			\node[anchor=north] at (3,0) {$t_6$};
			\node[anchor=north] at (4,0) {$t_7$};
			\node[anchor=south] at (-3,0) {$+$};
			\node[anchor=south] at (-2,0) {$+$};
			\node[anchor=south] at (-1,0) {$+$};
			\node[anchor=south] at (1,0) {$-$};
			\node[anchor=south] at (2,0) {$-$};
			\node[anchor=south] at (3,0) {$-$};
			\node[anchor=south] at (4,0) {$-$};
		\end{tikzpicture}
	\end{center}
	%\vspace{1em}
	
	In contrast to the cyclic polytope $C_4(T)$, which has 14 facets, the polytope $\poly(T)$ has $12$ facets.
	\Cref{fig:facets} shows all $12$ subsets $S\subset T$ corresponding to facets, i.e., $$\left\{\point{t_j} \mid t_j \in S\right\}$$ are the vertices of a facet.
	This can be checked by \Cref{facets Veronese}, since $L = [n] \setminus \{j \mid t_j \in S\}$ is $\sigma$-parity alternating. 
	For each $S$, \Cref{fig:facets} displays the functions $p_S(t) = \prod_{t_j \in S} (t - t_j)$ (in blue), thus illustrating that $\lambda_{\xi, S}(t) = \frac{p_S(t)}{q_\xi(t)}$ has constant sign on $T\setminus S$ (cf. \Cref{geometric Gale}).
	\Cref{fig:facets} also shows that each generating point is contained in at least one facet, hence all of these points are vertices.
	The red circles mark the points in $S_2 \sqcup S_3$, which will be described in \Cref{lem:facet-roots}. In \Cref{ex:not-neigborly-faets} we continue this example, showing that $\poly(T)$ has indeed precisely $12$ facets.

	\begin{table}[ht]
		\hspace{-2cm}
		\centering
		\begin{tabularx}{0.5\textwidth}{r l}
			\begin{minipage}[m]{7.52em}
				$L = \{t_1,t_2,t_3\}$ \\ $S = \{t_4,t_5,t_6,t_7\}$
			\end{minipage} &
			\multicolumn{1}{m{11.5em}}{
				\resizebox{11.5em}{3em}{
					\begin{tikzpicture}
						\draw[scale=1, domain=0.85:4.15, smooth, variable=\x, blue, very thick] plot ({\x}, {(\x-1)*(\x-2)*(\x-3)*(\x-4)});
						\draw[gray] (-3.5, 0) -- (4.5, 0);
						\draw[thick] (0, -0.3) -- (0, 0.3);
						\draw foreach \s in{-3,-2,-1,1,2,3,4}{
							(\s,0) node[fill,black,circle, inner sep=2.5pt]{}
						};
						\draw[color=red, very thick](1,0) circle (6pt);
						\draw[color=red, very thick](4,0) circle (6pt);
					\end{tikzpicture}
			}} 
			\\[2em]
			\begin{minipage}[m]{7.52em}
				$L = \{t_1,t_2,t_4\}$ \\
				$S = \{t_3,t_5,t_6,t_7\}$ 
			\end{minipage} &
			\multicolumn{1}{m{11.5em}}{
				\resizebox{11.5em}{3em}{
					\begin{tikzpicture}
						\draw[scale=1, domain=-1.1:4.2, smooth, variable=\x, blue, very thick] plot ({\x}, {1/15*(\x+1)*(\x-2)*(\x-3)*(\x-4)});
						\draw[gray] (-3.5, 0) -- (4.5, 0);
						\draw[thick] (0, -0.3) -- (0, 0.3);
						\draw foreach \s in{-3,-2,-1,1,2,3,4}{
							(\s,0) node[fill,black,circle, inner sep=2.5pt]{}
						};
						\draw[color=red, very thick](-1,0) circle (6pt);
						\draw[color=red, very thick](4,0) circle (6pt);
					\end{tikzpicture}
			}} 
			\\[2em]
			\begin{minipage}[m]{7.52em}
				$L = \{t_1,t_2,t_6\}$ \\
				$S = \{t_3,t_4,t_5,t_7\}$ 
			\end{minipage} &
			\multicolumn{1}{m{11.5em}}{
				\resizebox{11.5em}{3em}{
					\begin{tikzpicture}
						\draw[scale=1, domain=-1.3:4.2, smooth, variable=\x, blue, very thick] plot ({\x}, {1/11*(\x+1)*(\x-1)*(\x-2)*(\x-4)});
						\draw[gray] (-3.5, 0) -- (4.5, 0);
						\draw[thick] (0, -0.3) -- (0, 0.3);
						\draw foreach \s in{-3,-2,-1,1,2,3,4}{
							(\s,0) node[fill,black,circle, inner sep=2.5pt]{}
						};
						\draw[color=red, very thick](-1,0) circle (6pt);
						\draw[color=red, very thick](4,0) circle (6pt);
					\end{tikzpicture}
			}} 
			\\[2em]
				\begin{minipage}[m]{7.52em}
				$L = \{t_1,t_5,t_6\}$ \\
				$S = \{t_2,t_3,t_4,t_7\}$ 
			\end{minipage} &
			\multicolumn{1}{m{11.5em}}{
				\resizebox{11.5em}{3em}{
					\begin{tikzpicture}
						\draw[scale=1, domain=-2.4:4.2, smooth, variable=\x, blue, very thick] plot ({\x}, {1/30*(\x+2)*(\x+1)*(\x-1)*(\x-4)});
						\draw[gray] (-3.5, 0) -- (4.5, 0);
						\draw[thick] (0, -0.3) -- (0, 0.3);
						\draw foreach \s in{-3,-2,-1,1,2,3,4}{
							(\s,0) node[fill,black,circle, inner sep=2.5pt]{}
						};
						\draw[color=red, very thick](1,0) circle (6pt);
						\draw[color=red, very thick](4,0) circle (6pt);
					\end{tikzpicture}
			}} 
			\\[2em]
			\begin{minipage}[m]{7.52em}
				$L = \{t_2,t_3,t_5\}$ \\
				$S = \{t_1,t_4,t_6,t_7\}$ 
			\end{minipage} &
			\multicolumn{1}{m{11.5em}}{
				\resizebox{11.5em}{3.5em}{
					\begin{tikzpicture}
						\draw[scale=1, domain=-3.3:4.5, smooth, variable=\x, blue, very thick] plot ({\x}, {1/65*(\x+3)*(\x-1)*(\x-3)*(\x-4)});
						\draw[gray] (-3.5, 0) -- (4.5, 0);
						\draw[thick] (0, -0.3) -- (0, 0.3);
						\draw foreach \s in{-3,-2,-1,1,2,3,4}{
							(\s,0) node[fill,black,circle, inner sep=2.5pt]{}
						};
						\draw[color=red, very thick](-3,0) circle (6pt);
						\draw[color=red, very thick](1,0) circle (6pt);
					\end{tikzpicture}
			}} 
			\\[2em]
			\begin{minipage}[m]{7.52em}
				$L = \{t_2,t_3,t_7\}$ \\
				$S = \{t_1,t_4,t_5,t_6\}$ 
			\end{minipage} &
			\multicolumn{1}{m{11.5em}}{
				\resizebox{11.5em}{4em}{
					\begin{tikzpicture}
						\draw[scale=1, domain=-3.1:3.35, smooth, variable=\x, blue, very thick] plot ({\x}, {1/25*(\x+3)*(\x-1)*(\x-2)*(\x-3)});
						\draw[gray] (-3.5, 0) -- (4.5, 0);
						\draw[thick] (0, -0.3) -- (0, 0.3);
						\draw foreach \s in{-3,-2,-1,1,2,3,4}{
							(\s,0) node[fill,black,circle, inner sep=2.5pt]{}
						};
						\draw[color=red, very thick](-3,0) circle (6pt);
						\draw[color=red, very thick](1,0) circle (6pt);
					\end{tikzpicture}
			}} 
		\end{tabularx}	
		\hspace{1em}
		\begin{tabularx}{0.4\textwidth}{|r l}
			\begin{minipage}[m]{7.52em}
				$L = \{t_2,t_4,t_5\}$ \\
				$S = \{t_1,t_3,t_6,t_7\}$ 
			\end{minipage} &
			\multicolumn{1}{m{11.5em}}{
				\resizebox{11.5em}{4em}{
					\begin{tikzpicture}
						\draw[scale=1, domain=-3.3:4.4, smooth, variable=\x, blue, very thick] plot ({\x}, {1/30*(\x+3)*(\x+1)*(\x-3)*(\x-4)});
						\draw[gray] (-3.5, 0) -- (4.5, 0);
						\draw[thick] (0, -0.3) -- (0, 0.3);
						\draw foreach \s in{-3,-2,-1,1,2,3,4}{
							(\s,0) node[fill,black,circle, inner sep=2.5pt]{}
						};
						\draw[color=red, very thick](-3,0) circle (6pt);
						\draw[color=red, very thick](-1,0) circle (6pt);
					\end{tikzpicture}
			}} 
			\\[2em]
			\begin{minipage}[m]{7.52em}
				$L = \{t_2,t_4,t_7\}$ \\
				$S = \{t_1,t_3,t_5,t_6\}$ 
			\end{minipage} &
			\multicolumn{1}{m{11.5em}}{
				\resizebox{11.5em}{4em}{
					\begin{tikzpicture}
						\draw[scale=1, domain=-3.2:3.4, smooth, variable=\x, blue, very thick] plot ({\x}, {1/15*(\x+3)*(\x+1)*(\x-2)*(\x-3)});
						\draw[gray] (-3.5, 0) -- (4.5, 0);
						\draw[thick] (0, -0.3) -- (0, 0.3);
						\draw foreach \s in{-3,-2,-1,1,2,3,4}{
							(\s,0) node[fill,black,circle, inner sep=2.5pt]{}
						};
						\draw[color=red, very thick](-3,0) circle (6pt);
						\draw[color=red, very thick](-1,0) circle (6pt);
					\end{tikzpicture}
			}} 
			\\[2em]
				\begin{minipage}[m]{7.52em}
				$L = \{t_2,t_6,t_7\}$ \\
				$S = \{t_1,t_3,t_4,t_5\}$ 
			\end{minipage} &
			\multicolumn{1}{m{11.5em}}{
				\resizebox{11.5em}{4em}{
					\begin{tikzpicture}
						\draw[scale=1, domain=-3.15:2.3, smooth, variable=\x, blue, very thick] plot ({\x}, {1/8*(\x+3)*(\x+1)*(\x-1)*(\x-2)});
						\draw[gray] (-3.5, 0) -- (4.5, 0);
						\draw[thick] (0, -0.3) -- (0, 0.3);
						\draw foreach \s in{-3,-2,-1,1,2,3,4}{
							(\s,0) node[fill,black,circle, inner sep=2.5pt]{}
						};
						\draw[color=red, very thick](-3,0) circle (6pt);
						\draw[color=red, very thick](-1,0) circle (6pt);
					\end{tikzpicture}
			}} 
			\\[2em]
			\begin{minipage}[m]{7.52em}
				$L = \{t_3,t_5,t_6\}$ \\
				$S = \{t_1,t_2,t_4,t_7\}$ 
			\end{minipage} &
			\multicolumn{1}{m{11.5em}}{
				\resizebox{11.5em}{3em}{
					\begin{tikzpicture}
						\draw[scale=1, domain=-3.5:4.2, smooth, variable=\x, blue, very thick] plot ({\x}, {1/40*(\x+3)*(\x+2)*(\x-1)*(\x-4)});
						\draw[gray] (-3.5, 0) -- (4.5, 0);
						\draw[thick] (0, -0.3) -- (0, 0.3);
						\draw foreach \s in{-3,-2,-1,1,2,3,4}{
							(\s,0) node[fill,black,circle, inner sep=2.5pt]{}
						};
						\draw[color=red, very thick](1,0) circle (6pt);
						\draw[color=red, very thick](4,0) circle (6pt);
					\end{tikzpicture}
			}} 
			\\[2em]
			\begin{minipage}[m]{7.52em}
				$L = \{t_4,t_5,t_6\}$ \\
				$S = \{t_1,t_2,t_3,t_7\}$ 
			\end{minipage} &
			\multicolumn{1}{m{11.5em}}{
				\resizebox{11.5em}{4em}{
					\begin{tikzpicture}
						\draw[scale=1, domain=-3.5:4.15, smooth, variable=\x, blue, very thick] plot ({\x}, {1/60*(\x+3)*(\x+2)*(\x+1)*(\x-4)});
						\draw[gray] (-3.5, 0) -- (4.5, 0);
						\draw[thick] (0, -0.3) -- (0, 0.3);
						\draw foreach \s in{-3,-2,-1,1,2,3,4}{
							(\s,0) node[fill,black,circle, inner sep=2.5pt]{}
						};
						\draw[color=red, very thick](-1,0) circle (6pt);
						\draw[color=red, very thick](4,0) circle (6pt);
					\end{tikzpicture}
			}} 
			\\[2em]
			\begin{minipage}[m]{7.52em}
				$L = \{t_5,t_6,t_7\}$ \\
				$S = \{t_1,t_2,t_3,t_4\}$ 
			\end{minipage} &
			\multicolumn{1}{m{11.5em}}{
				\resizebox{11.5em}{3em}{
					\begin{tikzpicture}
						\draw[scale=1, domain=-3.4:1.2, smooth, variable=\x, blue, very thick] plot ({\x}, {10/65*(\x+3)*(\x+2)*(\x+1)*(\x-1)});
						\draw[gray] (-3.5, 0) -- (4.5, 0);
						\draw[thick] (0, -0.3) -- (0, 0.3);
						\draw foreach \s in{-3,-2,-1,1,2,3,4}{
							(\s,0) node[fill,black,circle, inner sep=2.5pt]{}
						};
						\draw[color=red, very thick](-3,0) circle (6pt);
						\draw[color=red, very thick](1,0) circle (6pt);
					\end{tikzpicture}
			}} 
		\end{tabularx}
	
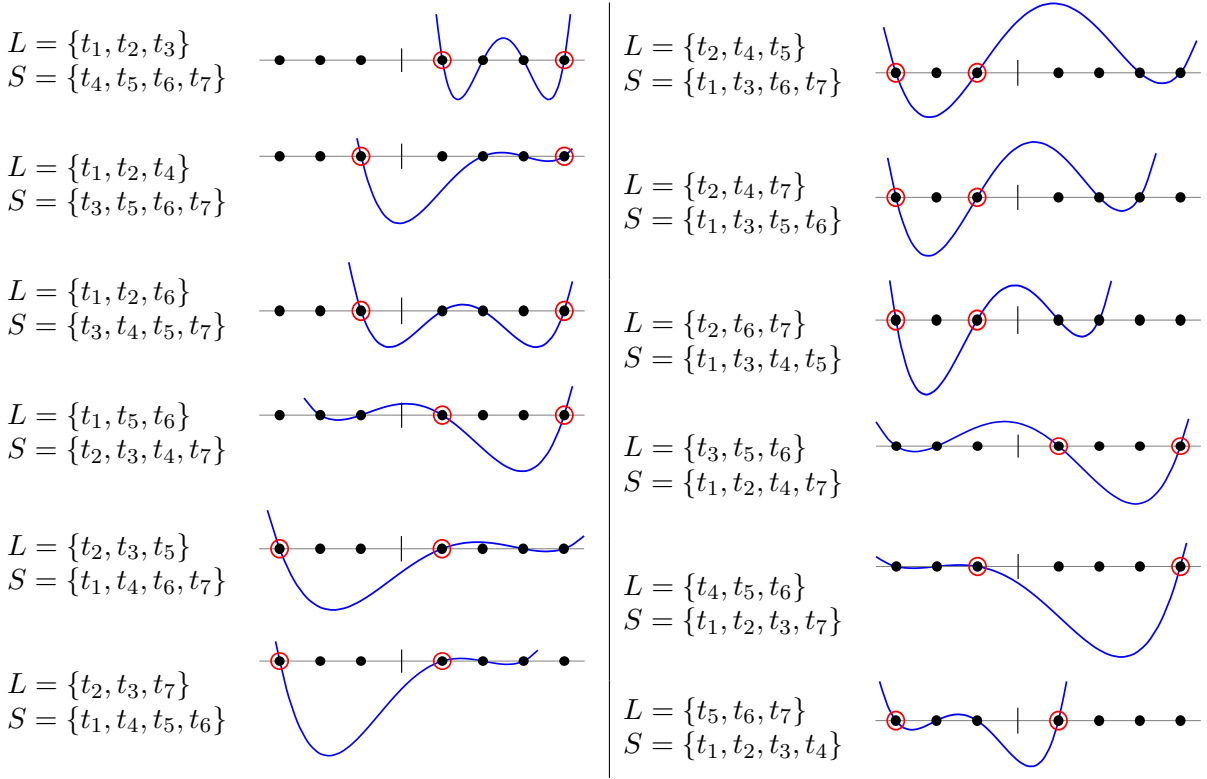
\captionof{figure}{The $12$ facets of the polytope from \Cref{ex:facets}.} 
	\label{fig:facets}
	\end{table}
\end{example}

A $\sigma$-decomposition of $T$ provides the partition $T = \bigcup_{j=1}^{k+1} I_j$, which invites to relabel elements of $T$ as follows. We denote $n_j = |I_j|, j \in [k+1]$, and index elements of $I_j$ uniquely as $t_{j,i}$, $i\in[n_j]$, by requiring that $t_{j,1} < \cdots < t_{j,n_j}$, as illustrated in \Cref{fig:dividers}. A \emph{consecutive pair} in $I_j$ is a pair of the form $\{t_{j,i},t_{j,i+1}\}$.

	\begin{figure}[b]
	\begin{tikzpicture}
	  [
	scale=0.7,
	point/.style={inner sep=1.2pt,circle,draw=black,fill=black,thick,scale=1.1},
	]
	\node[point, label=below:{$t_{1,1}$}] at (1,0) {};
	\node[point, label=below:{$t_{1,2}$}] at (2,0) {};
	\node at (3,0) {$\dots$};
	\node[point, label=below:{$t_{1,n_1}$}] at (4,0) {};
	\node at (4.5,0) {$\vert$};
	%\node at (4.5,0.6) {$s_1$};
	\node[point, label=below:{$\ t_{2,1}$}] at (5,0) {};
	\node at (6.1,0) {$\dots$};
	\node at (7,0) {$\dots$};
	\node[point, label=below:{\small $\! \! \! \! \! t_{j\text{-}1,n_{j\text{-}1}}$}] at (8,0) {};
	\node at (8.5,0) {$\vert$};
	%\node at (8.5,0.6) {$\! \!  s_{j\text{-}1}$};
	\node[point, label=below:{$ \ t_{j,1}$}] at (9,0) {};
	\node at (10,0) {$\dots$};
	\node[point, label=below:{$\! \! t_{j,n_j}$}] at (11,0) {};
	\node at (11.5,0) {$\vert$};
	%\node at (11.5,0.6) {$s_{j}$};
	\node[point, label=below:{$\quad t_{j+1,1}$}] at (12,0) {};
	\node at (13.1,0) {$\dots$};
	\node at (14,0) {$\dots$};
	\node[point, label=below:{$\! \! \!  t_{k,n_k}$}] at (15,0) {};
	\node at (15.5,0) {$\vert$};
	%\node at (15.5,0.6) {$k$};
	\node[point, label=below:{$\quad \  t_{k+1,1}$}] at (16,0) {};
	\node at (17,0) {$\dots$};
	\node[point, label=below:{$\qquad t_{k+1,n_{k+1}}$}] at (18,0) {};
	\draw [decorate, decoration = {brace},thick] (1,0.5) --  (4,0.5);
	\node at (2.5,1.1) {$I_1$};
	\draw [decorate, decoration = {brace},thick] (9,0.5) --  (11,0.5);
	\node at (10,1.1) {$I_j$};
	\draw [decorate, decoration = {brace},thick] (16,0.5) --  (18,0.5);
	\node at (17,1.1) {$I_{k+1}$};
\end{tikzpicture}
	\caption{The indexing of $T$ for \Cref{LEMMA} and \Cref{lem:facet-roots}.
	}
	\label{fig:dividers}
\end{figure}

\begin{lemma}\label{LEMMA}
	Let $S$ correspond to a facet, $J \subseteq S$ be a maximal (discrete) interval with respect to inclusion, and $D = \{j\in [k] \mid J \cap \{t_{j,n_j},t_{j+1,1} \} \neq \emptyset\}$.
	Then there exists a unique decomposition $J = J_1 \sqcup J_2 \sqcup J_3$ satisfying
	\begin{enumerate}[label={\textup{(}J\arabic*\textup{)}},ref={J\arabic*}]
		\item $J_1 = \{J^j_1 \mid j \in D\}$ consists of $|D|$ distinct elements such that $J^j_1 \in \{t_{j,n_j}, t_{j+1,1}\}$ for each $j\in D$,  \label{eq:J1} 
		\item $J_2 \subseteq \{t_{1,1}, t_{k+1,n_{k+1}}\}$, \label{eq:J2}
		\item $J_3$ consists of non-intersecting consecutive pairs. \label{eq:J3}
	\end{enumerate}
\end{lemma}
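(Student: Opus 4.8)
Write $J=\{t_a<\dots<t_b\}$. The plan is to locate $J$ relative to the intervals $I_1,\dots,I_{k+1}$, translate conditions (J1)--(J3) into a small system of parity constraints along these intervals, and then show that the facet hypothesis (via \Cref{geometric Gale c}) makes this system solvable, in fact uniquely. First I would record the geometry: since $J$ is a discrete interval of $T$ and the $I_i$ are consecutive discrete intervals, $J$ meets exactly a contiguous block $I_\alpha,\dots,I_\beta$, with $J\cap I_i=I_i$ for $\alpha<i<\beta$, while $J\cap I_\alpha$ is a suffix of $I_\alpha$ and $J\cap I_\beta$ a prefix of $I_\beta$ (a sub-interval of $I_\alpha$ if $\alpha=\beta$). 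From this one reads off that $D=\{\alpha,\dots,\beta-1\}$, together with $\alpha-1$ precisely when $t_{\alpha,1}\in J$ and $\alpha\ge2$, and together with $\beta$ precisely when $t_{\beta,n_\beta}\in J$ and $\beta\le k$.

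The second step is the reduction. The key observation is that a member of $J_1$ is one of $t_{j,n_j},t_{j+1,1}$ and a member of $J_2$ one of $t_{1,1},t_{k+1,n_{k+1}}$, and each of these, when it lies in $I_i$, equals $\min I_i$ or $\max I_i$, hence is an endpoint of the interval $J\cap I_i$. So $J_3\cap I_i$ arises from $J\cap I_i$ by deleting at most its two ends, is again an interval, and by (J3) must split into consecutive pairs --- forcing $|J_3\cap I_i|$ even, with that splitting unique. Thus, writing $m_i$ for the number of elements of $J\cap I_i$ placed into $J_1\cup J_2$, feasibility inside $I_i$ is exactly $m_i\equiv|J\cap I_i|\pmod2$ with $0\le m_i\le2$. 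Next I would set up the coupling of the $m_i$: an internal divider $j\in\{\alpha,\dots,\beta-1\}$ contributes $1$ to $m_j$ or to $m_{j+1}$ according to a binary choice $f_j$ (whether $J_1^j=t_{j,n_j}$ or $t_{j+1,1}$); the boundary divider $\alpha-1$, if present, is forced to $J_1^{\alpha-1}=t_{\alpha,1}$ (its other candidate is not in $J$) and adds $1$ to $m_\alpha$, symmetrically for $\beta$; and $J_2$ can only add a free bit $g_L$ to $m_\alpha$ (when $\alpha=1$, $t_1\in J$) or $g_R$ to $m_\beta$ (when $\beta=k+1$, $t_n\in J$). Hence $m_i=f_i+(1-f_{i-1})$ for $\alpha<i<\beta$, and at the ends $m_\alpha=f_\alpha+h_L$, $m_\beta=(1-f_{\beta-1})+h_R$ (with the obvious modification if $\alpha=\beta$), where $h_L,h_R\in\{0,1\}$ are the forced/boundary terms. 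The feasibility congruences then become the deterministic recursion $f_i\equiv n_i+f_{i-1}+1\pmod2$ for $\alpha<i<\beta$, together with the two end-conditions $f_\alpha\equiv|J\cap I_\alpha|+h_L$ and $(1-f_{\beta-1})+h_R\equiv|J\cap I_\beta|\pmod2$.

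Third, I would settle uniqueness and existence. At most one of $t_{a-1},t_{b+1}$ can be absent (otherwise $J=T$, impossible since $|S|=d<n$), and whenever $t_{a-1}$ exists the bit $h_L=[t_{\alpha,1}\in J]$ is forced, likewise $h_R$; so at least one boundary bit is forced, the recursion propagates deterministically, and the remaining end-condition is a closed constraint rather than a further choice --- this gives uniqueness of the $f_i,h_L,h_R$, hence of the decomposition. For existence, unfolding the recursion and using $|J\cap I_\alpha|+\sum_{\alpha<i<\beta}n_i+|J\cap I_\beta|=|J|$ collapses the two end-conditions into the single congruence $|J|\equiv(\beta-\alpha)+h_L+h_R\pmod2$. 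If one of $t_{a\mp1}$ is absent, the corresponding bit is free and flipping it flips the right-hand side, so the congruence can be met; otherwise $h_L=[t_{\alpha,1}\in J]$ and $h_R=[t_{\beta,n_\beta}\in J]$ and it must be checked. This is the only point where the facet hypothesis enters: by \Cref{geometric Gale c}, $(\sgn\circ p_S)(t_j)=c\,\sgn(I_{i(j)})=c\,\sgn(I_1)(-1)^{i(j)-1}$ for every $t_j\notin S$, where $t_j\in I_{i(j)}$; since $(\sgn\circ p_S)(t_j)=(-1)^{|\{s\in S:\,s>t_j\}|}$, the quantity $|\{s\in S:\,s>t_j\}|+i(j)$ is constant mod $2$ over $t_j\notin S$. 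Applying this to $t_{a-1}$ and $t_{b+1}$, subtracting, and using $|\{s\in S:s>t_{a-1}\}|-|\{s\in S:s>t_{b+1}\}|=|J|$ together with $i(t_{a-1})=\alpha-[t_{\alpha,1}\in J]=\alpha-h_L$ and $i(t_{b+1})=\beta+[t_{\beta,n_\beta}\in J]=\beta+h_R$, one obtains exactly the displayed congruence.

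The hard part will be the bookkeeping of the degenerate configurations, and that is where I expect the real work to sit: intervals with $|J\cap I_i|=1$, where ``delete the minimum'' and ``delete the maximum'' are the same operation, and the boundary intervals $I_\alpha,I_\beta$ according to whether $J$ reaches their ends. In each such case one must check that the constraint $m_i\equiv|J\cap I_i|\pmod2$ with $0\le m_i\le2$ pins the choice without conflict --- note that it also automatically enforces $m_i\le|J\cap I_i|$ (if $|J\cap I_i|=1$ then $m_i$ is odd, hence $1$), and that the recursion yields $f_i=f_{i-1}$ whenever $n_i=1$, which rules out a forbidden double deletion. Once this finite case-check is carried out, the existence argument (which also shows the pathological ``$m_i=2$ but $|J\cap I_i|=1$'' configurations cannot satisfy the facet condition) and the uniqueness argument combine to give the unique decomposition $J=J_1\sqcup J_2\sqcup J_3$.
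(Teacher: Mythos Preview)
Your approach is correct and closely parallels the paper's, though organised differently. The paper constructs $J_1$ by an explicit left-to-right inductive rule (or right-to-left, according to whether $t_{1,1}\in J$) based on the parities $|J\cap I_j\setminus\{J_1^{j-1}\}|$, then defines $J_2,J_3$ and verifies the three conditions by a case analysis; the key step is the identity $|J|\equiv |D|\pmod 2$, obtained by applying the $\sigma$-parity-alternating condition to the two neighbours of $J$ in $T\setminus S$. You recast the same problem as a linear system mod $2$ in binary variables $f_j,h_L,h_R$, reduce existence to the congruence $|J|\equiv(\beta-\alpha)+h_L+h_R\pmod 2$ --- which coincides with $|D|$ when both neighbours exist --- and derive it from \Cref{geometric Gale c} in exactly the same way. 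What your framing buys is a transparent uniqueness argument (the paper merely asserts uniqueness) and a uniform treatment of the two cases $t_{1,1}\in J$ and $t_{1,1}\notin J$ that the paper handles separately. The degenerate checks you flag (intervals with $|J\cap I_i|=1$, the boundary intervals $I_\alpha,I_\beta$) are real but routine, and your two observations --- that $m_i\equiv|J\cap I_i|\pmod 2$ with $m_i\in\{0,1,2\}$ already forces $m_i\le|J\cap I_i|$, and that the recursion gives $f_i=f_{i-1}$ when $n_i=1$, preventing a double deletion of a singleton --- are precisely what is needed to close them.
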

\begin{proof}
Under the condition $t_{1,1} \notin J$, we first define $J_1, J_2$ and $J_3$, then find conditions under which they satisfy \eqref{eq:J1}, \eqref{eq:J2}, \eqref{eq:J3}, and then show that these conditions are fulfilled. Afterwards, we treat the case $t_{1,1} \in J$.
It can be checked that the constructed decomposition is the unique decomposition of $J$ satisfying \eqref{eq:J1}, \eqref{eq:J2}, \eqref{eq:J3}.

Note that $D$ is a discrete interval, and so $D = \{j_1,\dots,j_r\}$ for some $j_1 \in [k]$, where $r = |D|$ and for $i \in [r-1]$ we have $j_{i+1} = j_i +1$, and $j_{r} \leq k$.
We begin by defining $J_1 = \{J_1^{j_1},\dots,j_1^{j_r}\}$ inductively for $j \in D$ in increasing order by
\begin{equation*}
\begin{cases}
	\text{if }  |J \cap I_j \setminus \{J_1^{j-1}\}| \text{ is odd} : & J_1^j :=  t_{j, n_{j}}\\
	\text{if } |J \cap I_j \setminus \{J_1^{j-1}\}| \text{ is even} : & J_1^j := t_{j+1,1},
\end{cases}
\end{equation*}
where $\{J_1^{j_0}\} := \emptyset$ and $I_j$, $j \in [k+1]$, are discrete intervals induced by the signed $\sigma$-decomposition.
%, and $I_0 = I_{k+2} = \emptyset$ by convention.
The set $J_2$ is defined by
\[
J_2 = \begin{cases}
	\{t_{k+1,n_{k+1}}\} & \text{if } t_{k+1,n_{k+1}} \in J \text{ and } |I_{k+1} \setminus J_1| \text{ odd} \\
	\emptyset & \text{ otherwise},
\end{cases}
\]
which clearly satisfies \eqref{eq:J2}. Finally, let $J_3 := J \setminus (J_1 \cup J_2)$.

The set $J_1$ satisfies $|J_1| = |D|$ and $J_1^j \in \{t_{j,n_j}, t_{j+1,1}\}$ for $j \in D$ by construction. To show \eqref{eq:J1}, it thus remains to show that $J_1 \subseteq J$. 
Note that if $\{t_{j_i,n_{j_i}}, t_{j_{i}+1,1}\} \subseteq J$, then $J^{j_i}_1 \in J_1$.
Since $J$ is an interval, this holds in general for $j_2,\dots,j_{r-1}$.
Now, for $j=j_1$, 
if $\{t_{j_1,n_{j_1}}, t_{j_{1}+1,1}\} \not\subseteq J$, then either $t_{j_1,n_{j_1}} \not \in J$, in which case $|J \cap I_{j_1} \setminus \{J_1^{j_0}\}| = 0$ is even, and $J_1^{j_1} = t_{j_1 + 1,1} \in J$, or $t_{j_1 + 1, n_{j_1 + 1}} \not \in J$. In the latter case we necessarily have $r = 1$ and $j_1 = j_r$, a case covered by the following condition 1).
For $j=j_r$, if $\{t_{j_r,n_{j_r}}, t_{j_{r}+1,1}\} \not\subseteq J$, then $t_{j_{r}+1,1} \not \in J$, since the other case is covered above ($r=1$).
It follows that $J_1^{j_r} \in J$, i.e., $J_1^{j_r} = t_{j_r,n_{j_r}}$, if and only if $|J \cap I_{j_r} \setminus \{J_1^{j_r - 1}\}|$ is odd.
It thus remains to show the following condition for $j = j_r$:
\begin{enumerate}
	\item[1)] If  $J \cap I_{j_r + 1} = \emptyset$, then $|J \cap I_{j_r} \setminus \{J_1^{j_r - 1}\}|$ is odd.
\end{enumerate}
To prove that $J_3$ satisfies \eqref{eq:J3}, we need to show that $|J_3 \cap I_j|$ is even for all $j \in [k+1]$. 
For $D \neq \emptyset$, the definition of $J_1$ implies that this is satisfied for $j \neq j_r + 1$, and always satisfied if 
$J \cap I_{j_{r}+1} = \emptyset$. Moreover, if $t_{k+1,n_{k+1}} \in J$ then the definition of $J_2$ implies that $|J_3 \cap I_{k+1}|$ is even.  
Thus, for $D \neq \emptyset$, it remains to show
\begin{enumerate}
	\item[2)]  if $t_{k+1,n_{k+1}} \not \in J$ and $J \cap I_{j_{r}+1} \neq \emptyset$ then $|J \cap I_{j_{r+1}} \setminus \{J_1^{j_r}\}|$ is even.
\end{enumerate}
If $D = \emptyset$, then $J_1 = \emptyset$. If $t_{k+1,n_{k+1}} \in J$ then $|J_3|$ is even by definition of $J_2$. Otherwise, we need to show
\begin{enumerate}
	\item[3)] if $D = \emptyset$ and $t_{k+1,n_{k+1}} \not \in J$, then $|J| = |J_3|$ is even.
\end{enumerate} 

Now we show conditions 1), 2) and 3). In all three cases we have
$t_{1,1} \not \in J$ and $t_{k+1,n_{k+1}}\not\in J$, so
\begin{gather*}
a = \max\{t \in T \mid \ \forall u \in J : t < u \}, 
\quad
b = \min\{t \in T \mid \ \forall u \in J : t > u \}, 
\end{gather*}
exist.
The maximality of $J$ implies that $a$ and $b$ are contained in $T\setminus S$, which, together with their definition, shows that they are consecutive elements in the $\sigma$-parity alternating sequence $T \setminus S$, and so $a = t_{l_u}$ and $b = t_{l_{u+1}}$ for some index $u \in [n-d-1]$ (cf. \Cref{facets Veronese}).
It follows that $|J| = l_{u+1} - l_u -1$ is even if and only if $\sgn_\sigma(t_{l_{u}})) = \sgn_\sigma(t_{l_{u+1}})$.
In case 3), $t_{l_u}, t_{l_{u+1}}$ are contained in the same interval $I_i$ (for some $i \in [k+1]$), thus proving 3).
If $D \neq \emptyset$, we have $t_{l_u} \in I_{j_1}$ and $t_{l_{u+1}} \in I_{j_r +1}$, and thus $\sgn_\sigma(t_{l_{u+1}}) = (-1)^{j_r + 1 - j_1} \sgn_\sigma(t_{l_{u}}) = (-1)^{r} \sgn_\sigma(t_{l_{u}})$. Therefore, $|J|$ is even if and only if $r$ is even.
For case 1) holds
\begin{align*}
	|J| &= 
	|\{ t \in J \mid t \leq J_1^{j_{r-1}} \}| + |\{ t \in J \mid t > J_1^{j_{r-1}} \}| \\
	&= 
	(r-1) + \sum_{j = j_1}^{j_{r - 1}} 
	| J \cap I_j \setminus \{J_1^{j_1},\dots,J_1^{j_{r-1}}\}| + |J \cap I_{j_r} \setminus \{J_1^{j_r - 1}\}|,
\end{align*}
where the second sum is a sum of even numbers by construction. Therefore, $|J \cap I_{j_r} \setminus \{J_1^{j_r - 1}\}|$ is odd, hence proving 1).
For case 2) holds
\begin{align*}
	|J| &= 
	|\{ t \in J \mid t \leq J_1^{j_{r}} \}| + |\{ t \in J \mid t > J_1^{j_{r}} \}| \\
	&= 
	r + \sum_{j = j_1}^{j_{r}} 
	| J \cap I_j \setminus J_1| + |J \cap I_{j_r +1} \setminus \{J_1^{j_r}\}|,
\end{align*}
where the second sum is an even number by construction. Therefore, $|J \cap I_{j_r+1} \setminus \{J_1^{j_r}\}|$ is even, hence proving 2). \par

To address $J$ such that $t_{1,1} \in J$, we define $J_1 = \{J_1^{1},\dots,J_1^{r}\}$ inductively for $j \in D = \{1,\dots, r\}$ in decreasing order by
\begin{equation*}
	\begin{cases}
		\text{if }  |J \cap I_{j+1} \setminus \{J_1^{j+1}\}| \text{ is odd} : & J_1^j :=  t_{j+1, 1}\\
		\text{if } |J \cap I_{j+1} \setminus \{J_1^{j+1}\}| \text{ is even} : & J_1^j := t_{j,n_j},
	\end{cases}
\end{equation*}
where $\{J_1^{r+1}\} := \emptyset$. 
	As before, $J_1$ satisfies $|J_1| = |D|$ and $J_1^j \in \{t_{j,n_j}, t_{j+1,1}\}$ for $j \in D$, so it remains to show that $J_1 \subseteq J$. Now, $\{t_{j,n_{j}}, t_{j+1,1}\} \subseteq J$ for $j=1,\dots,r-1$, so $J^{j}_1 \in J_1$.
	For $j=r$, 
	if $\{t_{r,n_{r}}, t_{r+1,1}\} \not\subseteq J$, then $t_{r+1,1} \not \in J$, in which case $|J \cap I_{r+1} \setminus \{J_1^{r+1}\}| = 0$ is even, and $J_1^{r} = t_{r,n_r} \in J$. Thus, $J_1$ satisfies \eqref{eq:J1}.

For defining $J_2$, note that $t_{k+1,n_{k+1}} \not \in J$, since $J$ is an interval of length $|J| \leq |S| = d < n$. We thus define $J_2$ by
\[
J_2 = \begin{cases}
	\{t_{1,n_{1}}\} & \text{if } |I_{1} \setminus J_1| \text{ is odd} \\
	\emptyset & \text{ otherwise},
\end{cases}
\]
which clearly satisfies \eqref{eq:J2}. Finally, for $J_3 := J \setminus (J_1 \cup J_2)$ holds that $|J_3 \cap I_j|$, $j \in [k+1]$, is even by construction of $J_1$ and $J_2$, and hence $J_3$ satisfies \eqref{eq:J3}.

\end{proof}

\begin{thm}\label{lem:facet-roots}
Let $T = \bigcup_{j=1}^{k+1} I_j$ be the partition induced by the signed $\sigma$-decomposition corresponding to a Veronese polytope $P_\sigma(T)$.
	A subset $S\subset T$ of cardinality $d$ corresponds to a facet of $\polyc(T)$ if and only if there exists a decomposition $S = S_1 \sqcup S_2 \sqcup S_3$, such that
	\begin{enumerate}[label={\textup{(}S\arabic*\textup{)}}, ref={S\arabic*}]
		\item $S_1 = \{S^1_1,\dots,S^k_1\}$ consists of $k$ distinct elements such that $S^j_1 \in \{t_{j,n_j}, t_{j+1,1}\}$ for each $j\in [k]$, \label{condition-1}
		\item $S_2 \in \{ \emptyset, \{t_{1,1}\}, \{t_{k+1,n_{k+1}}\},  \{t_{1,1}, t_{k+1,n_{k+1}}\} \}$ and $d - k - | S_2 |$ is even, \label{condition-2}
		\item $S_3$ consists of the remaining $d-k-|S_2|$ points, which occur as $\frac{d - k - |S_2|}{2}$ consecutive pairs.
		\label{condition-3}
	\end{enumerate}
	Furthermore, this decomposition is unique for every facet.
\end{thm}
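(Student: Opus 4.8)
The plan is to invoke \Cref{facets Veronese} to replace ``$S$ corresponds to a facet of $\polyc(T)$'' by ``$L := T\setminus S$ is a $\sigma$-parity alternating sequence'', and then to prove the two implications with \Cref{LEMMA} doing the work on one maximal discrete interval of $S$ at a time. Throughout I would record three elementary facts about a decomposition $S = S_1\sqcup S_2\sqcup S_3$ as in \eqref{condition-1}--\eqref{condition-3}: (i) each consecutive pair $\{t_{j,i},t_{j,i+1}\}$ occurring in $S_3$ lies inside a single interval $I_j$, hence inside a single maximal discrete interval of $S$; (ii) each of $t_{1,1}$ and $t_{k+1,n_{k+1}}$ lies in at most one maximal discrete interval of $S$; and (iii) for a maximal discrete interval $J$ of $S$ and $D_J := \{j\in[k]\mid J\cap\{t_{j,n_j},t_{j+1,1}\}\neq\emptyset\}$ (the set $D$ of \Cref{LEMMA}) one has $J\cap S_1 = \{S_1^j\mid j\in D_J\}$ --- if $j\in D_J$ then $S_1^j\in J$ because $J$, being a maximal interval meeting the two $T$-consecutive points $t_{j,n_j},t_{j+1,1}$, contains whichever of them lies in $S$, and conversely $S_1^j\in J$ forces $j\in D_J$.

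For the ``if'' direction I would take a decomposition $S = S_1\sqcup S_2\sqcup S_3$ as in \eqref{condition-1}--\eqref{condition-3} and check $\sigma$-parity alternation of $L$ directly. Fix consecutive elements $u<u'$ of $L$ with $u\in I_a$ and $u'\in I_b$; since the signs of the $I_i$ alternate with $i$, the condition of \Cref{def:pa-sequences} at this pair is exactly: the number $|J|$ of elements of $T$ strictly between $u$ and $u'$ is even $\iff a\equiv b\pmod 2$, where $J$ is a maximal discrete interval of $S$ (possibly empty) avoiding both ends of $T$. If $J=\emptyset$ then $u,u'$ are $T$-consecutive, so $b\in\{a,a+1\}$; the case $b=a+1$ is impossible, for then $u=t_{a,n_a}$ and $u'=t_{a+1,1}$ both lie outside $S$, contradicting $S_1^a\in\{t_{a,n_a},t_{a+1,1}\}\subseteq S$, while $b=a$ makes the equivalence trivial. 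If $J\neq\emptyset$ then $J\cap S_2=\emptyset$ (it avoids $t_{1,1}$ and $t_{k+1,n_{k+1}}$), $|J\cap S_3|$ is even (a union of complete consecutive pairs), and $|J\cap S_1|=|D_J|$ by (iii); a short case analysis on whether $u$ is the last point of $I_a$ and whether $u'$ is the first point of $I_b$ then shows $D_J=\{a,a+1,\dots,b-1\}$, whence $|J|\equiv|D_J|=b-a\pmod 2$, which is what is required.

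For the ``only if'' direction together with existence I would assume $S$ corresponds to a facet, so $L$ is $\sigma$-parity alternating, and decompose $S = J^{(1)}\sqcup\dots\sqcup J^{(m)}$ into its maximal discrete intervals. First I would show that the sets $D^{(a)}:=D_{J^{(a)}}$ partition $[k]$: they are pairwise disjoint because $t_{j,n_j},t_{j+1,1}$ are $T$-consecutive and so cannot meet two distinct maximal intervals, and they cover $[k]$ since if some $j$ lay in none of them, then $t_{j,n_j},t_{j+1,1}\in L$ would be $T$-consecutive while lying in the adjacent intervals $I_j,I_{j+1}$, violating $\sigma$-parity alternation of $L$. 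Then I would apply \Cref{LEMMA} to $S$ with $J=J^{(a)}$ for each $a$ to get $J^{(a)} = J_1^{(a)}\sqcup J_2^{(a)}\sqcup J_3^{(a)}$ satisfying \eqref{eq:J1}--\eqref{eq:J3} (whose set $D$ equals $D^{(a)}$), and set $S_\ell:=\bigsqcup_a J_\ell^{(a)}$. Then $|S_1|=\sum_a|D^{(a)}|=k$, and distinctness of the elements of $S_1$ (across disjoint intervals, and within each interval by \eqref{eq:J1}) makes $S_1^j\in\{t_{j,n_j},t_{j+1,1}\}$ well defined for each $j\in[k]$, so \eqref{condition-1} holds; \eqref{eq:J2} with (ii) gives $S_2\subseteq\{t_{1,1},t_{k+1,n_{k+1}}\}$, while $|S_3|=\sum_a|J_3^{(a)}|$ is even and equals $d-k-|S_2|$, so \eqref{condition-2} holds; and $S_3$, the disjoint union of the $J_3^{(a)}$, consists of $\tfrac{d-k-|S_2|}{2}$ non-intersecting consecutive pairs, so \eqref{condition-3} holds.

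Finally, for uniqueness let $S = S_1\sqcup S_2\sqcup S_3$ be \emph{any} decomposition satisfying \eqref{condition-1}--\eqref{condition-3}. For each maximal discrete interval $J^{(a)}$ of $S$, the triple $(J^{(a)}\cap S_1,\,J^{(a)}\cap S_2,\,J^{(a)}\cap S_3)$ satisfies \eqref{eq:J1}--\eqref{eq:J3} for $J^{(a)}$ --- \eqref{eq:J1} by (iii), \eqref{eq:J2} by $S_2\subseteq\{t_{1,1},t_{k+1,n_{k+1}}\}$, and \eqref{eq:J3} by (i) --- so by the uniqueness clause of \Cref{LEMMA} it is the triple produced there; since the $J^{(a)}$ partition $S$, the decomposition $S=S_1\sqcup S_2\sqcup S_3$ is uniquely determined, completing the proof. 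I expect the main obstacle to be the bookkeeping in the ``if'' direction: establishing $D_J=\{a,\dots,b-1\}$ requires treating the four positional cases for $u$ and $u'$ and noting that the indices involved ($a\le k$ when $u$ is the last point of $I_a$, $b\ge 2$ when $u'$ is the first point of $I_b$) are automatically in range once $J$ is nonempty and avoids the endpoints of $T$.
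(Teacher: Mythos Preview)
Your argument is correct. The ``only if'' direction and the uniqueness clause follow the paper's approach exactly: decompose $S$ into its maximal discrete intervals, apply \Cref{LEMMA} to each, and take the union; your observation that the sets $D_{J^{(a)}}$ partition $[k]$ is the same as the paper's remark that for every $j\in[k]$ one of $t_{j,n_j},t_{j+1,1}$ must lie in $S$, and your explicit uniqueness argument (restrict any admissible decomposition to each maximal interval and invoke the uniqueness clause of \Cref{LEMMA}) spells out what the paper leaves implicit.

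The ``if'' direction is where you diverge. You route through \Cref{facets Veronese} and verify $\sigma$-parity alternation of $L=T\setminus S$ pairwise, which requires establishing $D_J=\{a,\dots,b-1\}$ for each gap $J$ between consecutive elements of $L$ and then counting $|J|\bmod 2$. The paper instead appeals directly to \Cref{geometric Gale c}: condition \eqref{condition-3} forces $p_S$ to have constant sign on each $I_j\setminus S$ (the roots of $p_S$ strictly between two elements of $I_j\setminus S$ can only come from $S_3$, hence occur in pairs), and conditions \eqref{condition-1},\eqref{condition-3} force the sign to flip exactly once between adjacent nonempty $I_j\setminus S$ and $I_{j'}\setminus S$. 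This is appreciably shorter and avoids the four-case bookkeeping you flag as the main obstacle; your route works but the paper's is cleaner here.
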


\begin{proof}
	We first show that $S = S_1 \sqcup S_2 \sqcup S_3$ satisfying \eqref{condition-1}--\eqref{condition-3} forms a facet.
	Given $S_1\sqcup S_2 \sqcup S_3$, condition \eqref{condition-3} ensures that $p_S$
	(as defined in \eqref{eq:polynomials})
	has constant sign on each non-empty $I_j  \setminus S$. 
	Conditions \eqref{condition-1} and \eqref{condition-3} imply that 
	if $I_j \setminus S$ and $I_{j'} \setminus S$, $j<j'$, are non-empty, then $\sgn \circ p_S|_{I_j \setminus S} = (-1)^{j'-j} \sgn \circ p_S|_{I_j' \setminus S}$. Therefore, \Cref{geometric Gale c} implies that $S$ corresponds to a facet.

	Conversely, suppose that $S \subset T$ corresponds to a facet.
	By \Cref{LEMMA}, each maximal interval $J \subset S$ has a unique decomposition $J = J_1 \sqcup J_2 \sqcup J_3$ satisfying  \eqref{eq:J1}, \eqref{eq:J2}, \eqref{eq:J3}. 
	We define $S_i = \bigcup_{J} J_i$, $i\in [3]$, where the union ranges over all maximal intervals of $S$. Note that these unions are disjoint.
	
	To prove that $S_1$ satisfies \eqref{condition-1} it suffices to show that $S_1 \cap \{ t_{j,n_j}, t_{j+1,1} \} \neq \emptyset$ for every $j \in [k]$.
	Observe that $t_{j,n_j}$ or $t_{j+1,1}$ is a root of $p_S$, since otherwise, for some $j$ we would have
	$\sgn \circ p_S (t_{j,n_j}) = \sgn \circ p_S (t_{j+1,1})$, which contradicts $S$ being a facet.
	Therefore, $t_{j,n_j} \in S$ or $t_{j+1,1} \in S$ is contained in a unique maximal interval $J \subseteq S$. Thus, $t_{j,n_j} \in J_1$ or $t_{j+1,n_{j+1}} \in J_1$, and $J_1 \subseteq S_1$.
	
	By definition, $S_3$ consists of $|S_3|/2$ consecutive non-intersecting pairs, where
	\[
	|S_3| = |S| - |S_1| - |S_2| = d - k - |S_2|,
	\]
 and thus \eqref{condition-3} holds.
In particular, $d-k-|S_2|$ is even, and by definition of $J_2$ we have $S_2 \subset \{ t_{1,1}, t_{k+1,n_{k+1}} \}$, which implies \eqref{condition-2}.
\end{proof}

\subsection{Circular facet condition}\label{sec:cyclic-gale}
This section finds that conditions \eqref{condition-1}-\eqref{condition-3} of \Cref{lem:facet-roots} behave more naturally when the ground set $T$ is equipped with a cyclic order.
In turn, it allows us to find many combinatorial isomorphisms of Veronese polytopes, as well as to reinterpret them as cyclically ordered sets with marked points, called dividers.
Cyclically ordered sets were originally introduced by Huntington \cite{huntington16_setindependentpostulates,huntington35_interrelationsfour}.
\begin{defn}[Cyclic Order]
	A \emph{cyclic order} on a non-empty set $\mathring{T}$ is a ternary relation $C \subset \mathring{T} \times \mathring{T} \times \mathring{T}$ satisfying
	\begin{enumerate}
		\item if $(a,b,c) \in C$, then $(b,c,a) \in C$
		\item if $(a,b,c) \in C$, then $(c,b,a)$ does not belong to $C$
		\item if $(a,b,c) \in C$ and $(a,c,d) \in C$, then $(a,b,d) \in C$
		\item if $a,b,c \in \mathring{T}$ are mutually distinct, then either $(a,b,c) \in C$ or $(c,b,a) \in C$
	\end{enumerate}
If $(a,b,c) \in C$ are pairwise distinct, then we say that $b$ \emph{follows after} $a$ and \emph{lies before} $c$ with respect to the cyclic order $C$. For $a,b \in \cT$, $a\neq b$, $b$ \textit{follows consecutively after} $a$ if for all $x \in \cT \backslash \{a,b\}$ we have $(b,x,a) \in C$.
Finally, $a$ and $b$ form a \textit{consecutive pair} if either $b$ follows consecutively after $a$, or $a$ follows consecutively after $b$.
A \emph{divider} is a subset $\{a,b\} \subset \cT$ such that $a$ and $b$ form a consecutive pair.
\end{defn}

\begin{defn}\label{circular Gale}
Let $d$ be a positive integer and $\cT$ a set equipped with a cyclic order such that $|\cT| \geq d+1$. Let
$\cD = \left\{ \{a_1,b_1\}, \ldots, \{a_l,b_l\} \right\}$
be a set of dividers such that $|\cD|=l$, $0 \leq l \leq d$, and $l$ and $d$ have the same parities. The pair $(\mathring{T}, \cD)$ is called a \textit{circular composition} in dimension $d$.
A subset $\mathring{S} \subset \mathring{T}$ of cardinality $d$ satisfies the \emph{circular facet condition} (with respect to $\cD$) if there exists a partition $\cS = \cS_1 \sqcup \cS_2$ such that
	\begin{enumerate}[label={($\mathring{\text{S}}$\arabic*)}, ref={$\mathring{\text{S}}$\arabic*}]
		\item \label{circgale:dividers} $\cS_1 = \{\cS_1^1,\dots,\cS_1^l\}$ consists of $l$ distinct elements such that $\cS_1^j \in \{a_j,b_j\}$ for $j\in[l]$,
		\item  $S_2$ consists of the remaining $d-l$ points, which occur as $\frac{d-l}{2}$ consecutive pairs.
		\label{circgale:pairs}
	\end{enumerate}
	The set of all subsets $\cS \subset \cT$ of cardinality $d$ satisfying the circular facet condition is denoted by $\cfacets(\mathring{T})$.
	An \emph{isomorphism of circular compositions} $(\cT,\cD)$ and $(\cT',\cD')$ is a map $\kappa: \mathring{T} \to \mathring{T}'$ which induces a bijection between $\cfacets(\mathring{T})$ and $\cfacets[\cD'](\mathring{T}')$.
\end{defn}

\begin{figure}[b]
	\begin{tikzpicture}[scale=0.8]
	\def \n {18}
	\def \radius {2}
	\def \small {0.6pt}
	\def \big {1.6pt}
	\draw[color=gray] circle(\radius);
	%%Interval 1
	\draw[color=black]
	(-360/\n*1:-\radius) node[fill,circle, inner sep=\big]{}
	node[anchor=-360/\n*1]{$P_{1,1}$}
	(-360/\n*2:-\radius) node[fill,circle, inner sep=\big]{}
	node[anchor=-360/\n*2]{$P_{1,2}$}
	(-360/\n*2.7:-\radius) node[fill,circle, inner sep=\small]{}
	(-360/\n*3:-\radius) node[fill,circle, inner sep=\small]{}
	(-360/\n*3.3:-\radius) node[fill,circle, inner sep=\small]{}
	(-360/\n*4:-\radius) node[fill,circle, inner sep=\big]{}
	node[anchor=-360/\n*2.5]{$P_{1,m_1}$}
	%%Interval 2
	(-360/\n*5:-\radius) node[fill,circle, inner sep=\big]{}
	node[anchor=-360/\n*6]{$P_{2,1}$}
	(-360/\n*5.7:-\radius) node[fill,circle, inner sep=\small]{}
	(-360/\n*6:-\radius) node[fill,circle, inner sep=\small]{}
	(-360/\n*6.3:-\radius) node[fill,circle, inner sep=\small]{}
	(-360/\n*6.7:-\radius) node[fill,circle, inner sep=\small]{}
	(-360/\n*7:-\radius) node[fill,circle, inner sep=\small]{}
	(-360/\n*7.3:-\radius) node[fill,circle, inner sep=\small]{}
	(-360/\n*8:-\radius) node[fill,circle, inner sep=\big]{}
	node[anchor=-360/\n*8.5]{$P_{j-1,m_{j-1}}$}
	%% Interval j
	(-360/\n*9:-\radius) node[fill,circle, inner sep=\big]{}
	node[anchor=-360/\n*9.5]{$P_{j,1}$}
	(-360/\n*9.7:-\radius) node[fill,circle, inner sep=\small]{}
	(-360/\n*10:-\radius) node[fill,circle, inner sep=\small]{}
	(-360/\n*10.3:-\radius) node[fill,circle, inner sep=\small]{}
	(-360/\n*11:-\radius) node[fill,circle, inner sep=\big]{}
	node[anchor=-360/\n*10]{$P_{j,m_j}$}
	%% Interval j+1
	(-360/\n*12:-\radius) node[fill,circle, inner sep=\big]{}
	node[anchor=-360/\n*11]{$P_{j+1,1}$}
	(-360/\n*12.7:-\radius) node[fill,circle, inner sep=\small]{}
	(-360/\n*13:-\radius) node[fill,circle, inner sep=\small]{}
	(-360/\n*13.3:-\radius) node[fill,circle, inner sep=\small]{}
	(-360/\n*13.7:-\radius) node[fill,circle, inner sep=\small]{}
	(-360/\n*14:-\radius) node[fill,circle, inner sep=\small]{}
	(-360/\n*14.3:-\radius) node[fill,circle, inner sep=\small]{}
	(-360/\n*15:-\radius) node[fill,circle, inner sep=\big]{}
	node[anchor=-360/\n*12]{$P_{l-1,m_{l-1}}$}
	%% Interval l+1
	(-360/\n*16:-\radius) node[fill,circle, inner sep=\big]{}
	node[anchor=-360/\n*17]{$P_{l,1}$}
	(-360/\n*16.7:-\radius) node[fill,circle, inner sep=\small]{}
	(-360/\n*17:-\radius) node[fill,circle, inner sep=\small]{}
	(-360/\n*17.3:-\radius) node[fill,circle, inner sep=\small]{}
	(-360/\n*18:-\radius) node[fill,circle, inner sep=\big]{}
	node[anchor=-360/\n*18]{$P_{l,m_{l}}$}
	;
	%%Dividers
	\draw (-360/\n*0.5:-\radius-0.4) -- (-360/\n*0.5:-\radius+0.4);
	\draw (-360/\n*4.5:-\radius-0.4) -- (-360/\n*4.5:-\radius+0.4);
	\draw (-360/\n*8.5:-\radius-0.4) -- (-360/\n*8.5:-\radius+0.4);
	\draw (-360/\n*11.5:-\radius-0.4) -- (-360/\n*11.5:-\radius+0.4);
	\draw (-360/\n*15.5:-\radius-0.4) -- (-360/\n*15.5:-\radius+0.4);
	%% labels of intervals
	\draw[decorate, decoration = {brace, amplitude=18pt}] (-3.2,1.1) to (-0.5,3);
	\node at (-2.7,3) {$\mathring{I}_1$};
	\draw[decorate, decoration = {brace, amplitude=10pt}] (3.2,0.2) to (2.9,-2);
	\node at (3.9,-1) {$\mathring{I}_j$};
	\draw[decorate, decoration = {brace, amplitude=12pt}] (-2.4,-1.9) to (-3.5,0.3);
	\node at (-4,-1.3) {$\mathring{I}_{l}$};
\end{tikzpicture}
	\caption{The circular composition $\cT = \bigcup_{j=1}^l \cI_j$ induced by the signed $\sigma$-decomposition $T=\bigcup_{j=1}^{k+1} I_j$ from \Cref{fig:dividers}.}
	\label{fig:circle-ordering}
\end{figure}
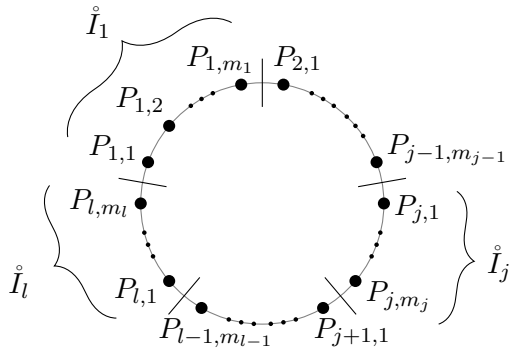

Now we construct a circular composition $(\mathring{T}, \cD)$ and a structure map $\tau: T \to \cT$ from a Veronese $d$-polytope $P_\sigma(T)$.
Recall that the associated cone $\sigma$ induces a signed $\sigma$-decomposition, in particular $T = \bigcup_{j=1}^{k+1} I_j$ for some $k \in \{ 0,\ldots,d \}$. 
Let $\cT$ be a set equipped with a cyclic order such that $|\cT| = |T|$, denote $\parity(k)$ the parity of $k$, and let
\[
l := \begin{cases}
	k+1 & \text{if } \parity(d)\neq\parity(k)\\
	k & \text{if } \parity(d)=\parity(k),
\end{cases}
\
m_j := \begin{cases}
	|I_j| & \text{ if } j \in [l] \text{ and } \parity(d)\neq\parity(k) \\
	|I_{j+1}| &\text{ if } j \in [l-1] \text{ and } \parity(d)=\parity(k) \\
	|I_{k+1}|+|I_1| &\text{ if } j=l \text{ and } \parity(d)=\parity(k).
\end{cases}
\]
If $l=0$, we set $\cD = \emptyset$, and fix a base point $P_{0,1} \in \cT$ to define
$ \tau(t_{1,i}) = P_{0,i} $
for $i \in [n]$. 
Let $l\geq1$ and fix a base point $P_{1,1} \in \cT$.
Iteratively, for each $j=1,\dots,l$ we define $\cI_j = \{P_{j,1},\dots,P_{j,m_j}\}$ such that for each $i \in [m_j -1]$ the point $P_{j,i+1}$ follows consecutively after $P_{j,i}$, and for $j<l$, $P_{j+1,1}$ follows consecutively after $P_{j,m_j}$. 
Therefore,
	\begin{gather} \label{labelling}
		\mathring{T} = \cup_{j=1}^l \cI_j, \quad \cI_j = \{ P_{j,1}, \ldots, P_{j,m_j} \}
	\end{gather}
(see \Cref{fig:circle-ordering}), and it follows that $P_{1,1}$ follows consecutively after $P_{l,m_l}$. 
We define the set of dividers
\[
	\cD = \{ \{P_{j,m_j}, P_{j+1,1} \} \mid j \in [l-1] \} \cup \{ \{P_{l,m_l}, P_{1,1}\} \},
\]
and the map $\tau: T \to \cT$ by
\[
\tau(t_{j,i}) = \begin{cases}
	P_{j,i} &  \text{ if } j \in [k], i \in [n_j] \text{ and } \parity(d)\neq\parity(k) \\
	P_{j-1,i} &  \text{ if } j \in \{2,\dots,k+1\}, i \in [n_j] \text{ and } \parity(d)=\parity(k) \\
	P_{k,m_k+i} &  \text{ if } j =1, i \in [n_1] \text{ and } \parity(d)=\parity(k).
\end{cases}
\]
We obtained a circular composition $(\cT,\cD) = (\tau(T),\cD)$, called the circular composition \emph{induced by} $P_\sigma(T)$ with respect to the base point. When necessary to distinguish between two induced compositions we use the notation $\tau, \tau'$. Note that by construction, $l$ and $d$ have the same parities.

\begin{thm}\label{th:circular-gale}
	Let $P_\sigma(T)$ be a Veronese $d$-polytope with induced circular composition $(\cT,\cD)$ with respect to choices of a base point and cyclic order.
	Then
	$S \subseteq T$ corresponds to a facet of $P_\sigma(T)$ if and only if $\tau(S)$ satisfies the circular facet conditions \eqref{circgale:dividers}, \eqref{circgale:pairs}.
\end{thm}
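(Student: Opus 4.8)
The plan is to reduce the statement to the linear characterisation of facets in \Cref{lem:facet-roots}. Concretely, I would show that the (bijective) structure map $\tau\colon T\to\cT$ transports a decomposition $S=S_1\sqcup S_2\sqcup S_3$ witnessing that $S$ corresponds to a facet (conditions \eqref{condition-1}--\eqref{condition-3}) to a decomposition $\tau(S)=\cS_1\sqcup\cS_2$ witnessing the circular facet condition \eqref{circgale:dividers}, \eqref{circgale:pairs}, and conversely. The whole argument is bookkeeping organised around the parity case split built into the definition of $\tau$: if $\parity(d)\neq\parity(k)$ then $l=k+1$, while if $\parity(d)=\parity(k)$ then $l=k$ (this is exactly what forces $l$ and $d$ to have the same parity, as required by \Cref{circular Gale}); correspondingly, by \Cref{lem:facet-roots}, the first case forces $|S_2|=1$ and the second forces $|S_2|\in\{0,2\}$.

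The core of the proof is a dictionary between the linear and circular combinatorial data, obtained by unwinding the definitions of $\cD$ and $\tau$ (\Cref{fig:dividers} and \Cref{fig:circle-ordering}). First, in both parity cases the $k$ interior pairs $\{t_{j,n_j},t_{j+1,1}\}$, $j\in[k]$, are mapped bijectively onto the dividers of $\cD$; when $\parity(d)=\parity(k)$ these exhaust $\cD$, the wrap-around divider $\{P_{l,m_l},P_{1,1}\}$ being the image of the pair with $j=1$. Second, the boundary pair $\{t_{1,1},t_{k+1,n_{k+1}}\}$ becomes the remaining wrap-around divider $\{P_{l,m_l},P_{1,1}\}$ when $\parity(d)\neq\parity(k)$, whereas when $\parity(d)=\parity(k)$ its image $\{\tau(t_{k+1,n_{k+1}}),\tau(t_{1,1})\}$ is the consecutive pair sitting at the seam inside $\cI_l$ (here one uses the defining formulas for $m_l$ and for $\tau$ on $I_1$ and $I_{k+1}$). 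Third, every consecutive pair contained in some $I_j$ maps to a consecutive pair of $\cT$, and the only consecutive pairs of $\cT$ not arising this way are the dividers and, in the equal-parity case, the seam pair.

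Granting the dictionary, the forward direction is immediate: given $S_1\sqcup S_2\sqcup S_3$ as in \Cref{lem:facet-roots}, set $\cS_1:=\tau(S_1)\cup\tau(S_2)$ and $\cS_2:=\tau(S_3)$ when $\parity(d)\neq\parity(k)$ (so $|\cS_1|=k+1=l$, with one element on each divider, and $\cS_2$ is a disjoint union of consecutive pairs), and set $\cS_1:=\tau(S_1)$, $\cS_2:=\tau(S_2\cup S_3)$ when $\parity(d)=\parity(k)$ (using that a two-element $S_2$ maps to the seam pair); in both cases $|\cS_2|=d-l$ is even. For the converse, pull $\cS_1$ back through $\tau$: exactly one endpoint of each interior divider lands in $S_1$, and in the unequal-parity case the element chosen from the wrap-around divider pulls back into $\{t_{1,1},t_{k+1,n_{k+1}}\}$, which we take as $S_2$. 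Since $\cS_1$ meets every divider, no divider pair is contained in $\cS_2$; hence by the dictionary every consecutive pair of $\cS_2$ pulls back either to a consecutive pair inside some $I_j$ or (equal-parity case) to the boundary pair, which we move into $S_2$. The remaining pairs form $S_3$, and $d-k-|S_2|$ is even because $d-l$ is. Thus $S_1\sqcup S_2\sqcup S_3$ satisfies \eqref{condition-1}--\eqref{condition-3}, and \Cref{lem:facet-roots} finishes the argument. The main obstacle is establishing the dictionary precisely, above all confirming that the boundary pair really is a consecutive pair of $\cT$ in the equal-parity case, and that once one endpoint of every divider has been committed to $\cS_1$ no consecutive pair of $\cT$ can straddle a divider; the rest is routine re-indexing.
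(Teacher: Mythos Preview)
Your proposal is correct and takes essentially the same approach as the paper: both reduce to \Cref{lem:facet-roots}, split on whether $\parity(d)=\parity(k)$, and in the two cases set $(\cS_1,\cS_2)=(\tau(S_1\cup S_2),\tau(S_3))$ respectively $(\tau(S_1),\tau(S_2\cup S_3))$, with the converse obtained by applying $\tau^{-1}$. Your ``dictionary'' framing makes the bookkeeping more transparent than the paper's terser version, but the content is the same.
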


\begin{proof}
	Let $T=\bigcup_{j=1}^{k+1} I_j$ with $n_j = |I_j|$, as in \Cref{lem:facet-roots}. 
	Recall from \Cref{lem:facet-roots} that $S \subset T$ corresponds to a facet if and only if $S = S_1 \sqcup S_2 \sqcup S_3$, satisfying \eqref{condition-1}, \eqref{condition-2} and \eqref{condition-3}.
	Note, that $\cD=\emptyset$ if and only if $l=k=0$ and $d,k$ have the same parity.
	
	We first show the desired equivalence for the case where $d$ and $k$ have distinct parities. 
	Then $l=k+1$, and $m_j = n_j, j\in [k+1]$.
	With the induced partition $\cT=\bigcup_{j=1}^l \cI$ from \eqref{labelling}, we have
	$\tau(I_j) = \cI_j$.  
	We show that if $S$ satisfies \eqref{condition-1}--\eqref{condition-3}, then $\tau(S)$ satisfies \eqref{circgale:dividers},\eqref{circgale:pairs}.
	Since every consecutive pair in $T$ is a consecutive pair in $\cT$, \eqref{condition-3} implies that $\tau(S_3)$ satisfies \eqref{circgale:pairs}.
	By \eqref{condition-1}, we have $\tau(S_1) = \{\cS_1^1,\dots,\cS_1^{l-1}\}$ such that $\cS_1^j \in \{P_{j,n_j},P_{j+1,1}\}$ for $j = 1,\dots,l-1$. 
	Since $d,k$ have distinct parities, \eqref{condition-2} implies that $|S_2|$ is odd, and therefore $S_2 = \{s\}$ where $s \in \{ t_{1,1},t_{k+1,n_{k+1}} \}$. We thus have $\tau(s) =: \cS_1^{l} \in \{ \tau(t_{1,1}),\tau(t_{k+1,n_{k+1}}) \} = \{ P_{1,1},P_{l,n_{l}} \}$, so $\tau(S_1 \cup S_2)$ satisfies \eqref{circgale:dividers}. Hence, $\tau(S) = \tau(S_1 \sqcup S_2) \sqcup \tau(S_3)$ satisfies the circular facet condition. 
	Conversely, given a set $\cS = \cS_1 \sqcup \cS_2$ satisfying the circular facet conditions, we can reverse the process by applying $\tau^{-1}$. More concretely, we define $S_1 = \{\tau^{-1}(\cS_1^1),\dots,\tau^{-1}(\cS_1^{l-1})\}, S_2 = \{\tau^{-1}(\cS_1^l)\}, S_3 = \tau^{-1}(\cS_2)$.
	
	We now consider the case where $d$ and $k$ have the same parity, and show that if $S$ satisfies \eqref{condition-1}--\eqref{condition-3}, then $\tau(S)$ satisfies \eqref{circgale:dividers},\eqref{circgale:pairs}. 
	If $l=k>0$, then $S_1 \neq \emptyset$ and $\tau(S_1^j) \in \{P_{j-1,m_{j-1}},P_{j,1}\}$ for $j=2,\dots,k$ and $\tau(S_1^1) \in  \{P_{k,m_{k}},P_{1,1}\}$, so $\tau(S_1)$ satisfies \eqref{circgale:dividers}. If $l=k=0$ then $S_1 = \emptyset$, and $\tau(S_1)$ satisfies the condition trivially. Since $d$ and $k$ have the same parity, we have that $S_2$ is either empty or $S_2 = \{t_{1,1}, t_{k+1,n_{k+1}}\}$. In the latter case holds $\tau(S_2) = \{P_{k,n_k}, P_{k,n_k+1}\}$, which is a consecutive pair in $\cT$. Thus, $\tau(S_2 \cup S_3)$ satisfies \eqref{circgale:pairs}. The reverse direction can again be shown by applying $\tau^{-1}$.
\end{proof}

\begin{corollary}\label{cor:isomorphims}
	The polytopes $\polyc(T)$ and $P_{\sigma'}(T')$ are combinatorially equivalent if and only if their induced
	circular compositions $(\tau(T),\cD)$ and $(\tau'(T'),\cD')$ are isomorphic.
\end{corollary}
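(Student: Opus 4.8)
The plan is to reduce the statement to the standard fact that two simplicial polytopes are combinatorially equivalent precisely when their boundary complexes are isomorphic as abstract simplicial complexes, and then to transport everything across the structure maps $\tau,\tau'$ via \Cref{th:circular-gale}. First, by \Cref{prop:simplicial} both $\polyc(T)$ and $P_{\sigma'}(T')$ are simplicial, and each of their facets is the convex hull of the generating points indexed by a unique $d$-element subset of $T$ (resp.\ $T'$) that corresponds to a facet. Hence the boundary complex of $\polyc(T)$ is the abstract simplicial complex $\Delta$ whose facets are exactly the subsets $S\subseteq T$ corresponding to a facet, on the vertex set $V(T):=\bigcup\{S\mid S\text{ corresponds to a facet}\}$; simpliciality is what guarantees that $V(T)$ is the actual vertex set, since every $0$-face lies in a facet and every generating point indexed by an element of such an $S$ is a vertex of the corresponding $(d-1)$-simplex, hence of $\polyc(T)$. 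Likewise $P_{\sigma'}(T')$ has boundary complex $\Delta'$ on $V(T')$. Thus $\polyc(T)$ and $P_{\sigma'}(T')$ are combinatorially equivalent if and only if there is a bijection $V(T)\to V(T')$ carrying the facets of $\Delta$ onto the facets of $\Delta'$.

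Next I would transport $\Delta$ and $\Delta'$ to the circular side. By construction $\tau\colon T\to\cT$ and $\tau'\colon T'\to\mathring T'$ are bijections, and by \Cref{th:circular-gale} the assignment $S\mapsto\tau(S)$ is a bijection from the facets of $\Delta$ onto $\cfacets(\cT)$, and similarly for $\tau'$ and $\cfacets[\cD'](\mathring T')$. Consequently $\tau$ restricts to a bijection from $V(T)$ onto $\mathring V:=\bigcup\cfacets(\cT)$ which carries $\Delta$ isomorphically onto the simplicial complex $\mathring\Delta$ on $\mathring V$ with facet set $\cfacets(\cT)$, and analogously $\tau'$ yields $\Delta'\cong\mathring\Delta'$, where $\mathring\Delta'$ lives on $\mathring V':=\bigcup\cfacets[\cD'](\mathring T')$. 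So the corollary reduces to the purely combinatorial equivalence: $\mathring\Delta\cong\mathring\Delta'$ as simplicial complexes if and only if $(\cT,\cD)\cong(\mathring T',\cD')$ as circular compositions.

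For the forward implication of this last equivalence, an isomorphism $\mathring\Delta\cong\mathring\Delta'$ is a facet-preserving bijection $\mathring V\to\mathring V'$; extending it to a map $\kappa\colon\cT\to\mathring T'$ by sending the points of $\cT\setminus\mathring V$ to an arbitrary point of $\mathring T'$ does not alter the induced assignment $\mathring S\mapsto\kappa(\mathring S)$ on $\cfacets(\cT)$ (those points occur in no member of $\cfacets(\cT)$), so $\kappa$ is an isomorphism of circular compositions as in \Cref{circular Gale}. For the reverse implication, let $\kappa\colon\cT\to\mathring T'$ be an isomorphism of circular compositions; it maps $\mathring V$ into $\mathring V'$ and is surjective onto $\mathring V'$, because every $\cS\in\cfacets[\cD'](\mathring T')$ equals $\kappa(\cS_0)$ for some $\cS_0\in\cfacets(\cT)$, so every point of $\cS$ lies in $\kappa(\mathring V)$. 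The one step that needs genuine care---and the main obstacle---is to show that $\kappa$ is \emph{injective} on $\mathring V$, i.e.\ that an isomorphism of circular compositions cannot identify two distinct points both occurring in facets; granting this, $\kappa|_{\mathring V}$ is the desired facet-preserving bijection $\mathring V\to\mathring V'$, hence an isomorphism $\mathring\Delta\cong\mathring\Delta'$, and the three displayed equivalences combine to give the corollary. I would deduce this injectivity from the explicit characterisation of the points of a circular composition occurring in facets (equivalently, of which generating points are vertices, cf.\ \Cref{vertices}); alternatively it can be forced by a counting comparison, for a hypothetical collapsed pair $p\neq q$, of the facets through $p$, the facets through $q$ (which are disjoint collections), and the facets through their common image.
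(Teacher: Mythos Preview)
Your argument is correct and follows essentially the same route as the paper: both directions are obtained by transporting the facet description across $\tau,\tau'$ via \Cref{th:circular-gale}. For the direction ``combinatorially equivalent $\Rightarrow$ circular compositions isomorphic'' the paper first replaces $T$ by its vertex set $V$ (using that $P_\sigma(T)$ and $P_\sigma(V)$ have isomorphic induced compositions via the inclusion) and then reads off the facet bijection from the combinatorial equivalence; your device of extending a vertex bijection arbitrarily on $\cT\setminus\mathring V$ is an equivalent manoeuvre.

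Where you go beyond the paper is in isolating the one genuine subtlety: in the direction ``circular compositions isomorphic $\Rightarrow$ combinatorially equivalent'', one must know that $\kappa$ is injective on $\mathring V$. The paper's proof simply asserts the conclusion (``the map $\kappa$ restricts to a map between vertices, and hence induces an isomorphism of face lattices'') without addressing this. Your two proposed fixes point in the right direction but are not yet complete as stated: \Cref{vertices} identifies $\mathring V$ inside $\cT$ but does not by itself prevent $\kappa$ from collapsing two of its points, and the bare count of facets through $p$, through $q$, and through their common image only gives $f'(\kappa(p))\geq f(p)+f(q)$, which is not a contradiction. A clean way to finish is to use that $\mathring\Delta$ and $\mathring\Delta'$ are boundary complexes of simplicial $d$-polytopes, hence pseudomanifolds: if $\kappa(p)=\kappa(q)=r$ with $p\neq q$, then the facets of the $(d-2)$-sphere $\operatorname{lk}(r)$ split into two nonempty classes (images of facets through $p$ versus through $q$) with no adjacent pair across the classes---for such an adjacency would force $p$ and $q$ into a common facet of $\mathring\Delta$, contradicting injectivity of $\kappa$ on that facet---and this violates connectedness of the dual graph of $\operatorname{lk}(r)$.
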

\begin{proof}
	Let  $(\tau(T),\cD)$ and $(\tau'(T'),\cD')$ be the induced circular decompositions of $\polyc(T)$ and $P_{\sigma'}(T')$.
By definition of isomorphism of circular compositions and maps $\tau, \tau'$, one finds a map $\kappa: T \to T'$ which induces a bijection between subsets of $T$ and $T'$ corresponding to facets. The map $\kappa$ restricts to a map between vertices, and hence induces an isomorphism of face lattices $\polyc(T)$ and $P_{\sigma'}(T')$.

To show the other implication, note that if $V \subset T$ corresponds to the set of vertices of $P_\sigma(T)$, then the induced circular compositions of $\polyc(T)$ and $\polyc(V)$ are isomorphic via the inclusion $V \hookrightarrow T$.
Hence, to prove the claim, it is enough to show it for $\polyc(V)$ and  $P_{\sigma'}(V')$, where $V \subset T$ and $V' \subset T'$ are sets of vertices.
The combinatorial equivalence provides a bijection on vertices which induces a bijection on subsets corresponding to facets. Tautologically, this yields the isomorphism of induces circular compositions.
\end{proof}

\begin{corollary}\label{cor:more-isomorphisms}
Any two induced compositions of a Veronese polytope are isomorphic.
\end{corollary}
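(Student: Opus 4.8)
The plan is to deduce this directly from \Cref{th:circular-gale}, which characterises the facet-subsets of $P_\sigma(T)$ by the circular facet condition without reference to the auxiliary choices made in forming an induced composition. Fix a Veronese $d$-polytope $P_\sigma(T)$, and let $(\cT,\cD)$ with structure map $\tau : T \to \cT$, and $(\cT',\cD')$ with structure map $\tau' : T \to \cT'$, be the circular compositions induced by $P_\sigma(T)$ with respect to two (possibly different) choices of base point and cyclically ordered set.

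First I would observe that $\tau$ and $\tau'$ are bijections. Indeed, by construction $|\cT| = |\cT'| = |T|$; the discrete intervals $I_1,\dots,I_{k+1}$ partition $T$, the arcs $\cI_1,\dots,\cI_l$ partition $\cT$ (and similarly for $\cT'$); and the integers $m_j$ are chosen precisely so that $\tau$ maps each block $I_j$ onto a block of equal cardinality by an order-preserving relabelling, matching up the parity cases $\parity(d)\neq\parity(k)$ and $\parity(d)=\parity(k)$. Hence $\kappa := \tau' \circ \tau^{-1} : \cT \to \cT'$ is a well-defined bijection, and the induced set map $\cS \mapsto \kappa(\cS)$ is a bijection between the $d$-element subsets of $\cT$ and those of $\cT'$.

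It then remains to check that $\kappa$ restricts to a bijection between $\cfacets(\cT)$ and $\cfacets[\cD'](\cT')$, which is exactly the definition of an isomorphism of circular compositions in \Cref{circular Gale}. Let $\cS \subseteq \cT$ have cardinality $d$. Applying \Cref{th:circular-gale} to the induced composition $(\cT,\cD)$, the set $\cS$ satisfies the circular facet condition with respect to $\cD$ if and only if $\tau^{-1}(\cS) \subseteq T$ corresponds to a facet of $P_\sigma(T)$. Applying \Cref{th:circular-gale} once more, now to the induced composition $(\cT',\cD')$, this holds if and only if $\tau'\bigl(\tau^{-1}(\cS)\bigr) = \kappa(\cS)$ satisfies the circular facet condition with respect to $\cD'$. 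Therefore $\cS \in \cfacets(\cT)$ if and only if $\kappa(\cS) \in \cfacets[\cD'](\cT')$, so $\kappa$ is an isomorphism of circular compositions.

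I do not expect a genuine obstacle here: the only point needing care is verifying that $\tau$ is a bijection, so that $\kappa = \tau' \circ \tau^{-1}$ is meaningful, after which the claim follows from two applications of \Cref{th:circular-gale}. (Equivalently, one may simply invoke \Cref{cor:isomorphims} with $P_{\sigma'}(T') = P_\sigma(T)$ and the identity combinatorial equivalence.)
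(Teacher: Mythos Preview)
Your proof is correct and matches what the paper intends: the corollary is stated without proof in the paper, as it is an immediate consequence of \Cref{cor:isomorphims} applied with $P_{\sigma'}(T') = P_\sigma(T)$ (as you note in your final parenthetical), or equivalently of two applications of \Cref{th:circular-gale} via $\kappa = \tau' \circ \tau^{-1}$, exactly as you wrote.
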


	\begin{figure}
	\centering
	\begin{tikzpicture}[scale=1]
	\node at (-4,0.2) {
		\begin{tikzpicture}[scale=1]
			\def \n {9}
			\def \radius {1.5}
			\draw[gray] circle(\radius);
			\draw
			foreach\s in{1,...,\n}{
				(-360/\n*\s:-\radius) node[fill,circle, inner sep=1.5pt]{}
				node[anchor=-360/\n*\s]{$$}
			};
			\draw[thick] (-360/\n*1.5:-\radius-0.4) -- (-360/\n*1.5:-\radius+0.4);
			\draw[thick] (-360/\n*8.5:-\radius-0.4) -- (-360/\n*8.5:-\radius+0.4);
			\node at (-2.2,0) {$\tau(t_{1,1})$};
			\node at (-1.8,1) {$\tau(t_{1,2})$};
			%\node at (1.1,-1.7) {$\tau(t'_{1,1})$};
			%\node at (-0.45,-1.9) {$\tau(t'_{1,2})$};
		\end{tikzpicture}
	};
	\node at (3.6,0) {
	\begin{tikzpicture}[scale=1]
		\def \n {9}
		\def \radius {1.5}
		\draw[gray] circle(\radius);
		\draw
		foreach\s in{1,...,\n}{
			(-360/\n*\s:-\radius) node[fill,circle, inner sep=1.5pt]{}
			node[anchor=-360/\n*\s]{$$}
		};
		\draw[thick] (-360/\n*11.5:-\radius-0.4) -- (-360/\n*11.5:-\radius+0.4);
		\draw[thick] (-360/\n*13.5:-\radius-0.4) -- (-360/\n*13.5:-\radius+0.4);
		%\node at (-2.2,0) {$\tau(t_{1,1})$};
		%\node at (-1.8,1) {$\tau(t_{1,2})$};
		\node at (1.1,-1.7) {$\tau'(t'_{1,2})$};
		\node at (-0.45,-1.9) {$\tau'(t'_{1,1})$};
	\end{tikzpicture}
};
	\node at (-3.5,-3.5) {
		\begin{tikzpicture}
			\draw[gray] (0.4,0) -- (6.3,0);
			\draw[thick] (1.66,0.3) -- (1.66,-0.3)
			foreach \s in{1,...,9}{
				(\s/1.5,0) node[fill,black,circle, inner sep=1.5pt]{}
			};
			\node[anchor=south] at (1/1.5,0) {$t_{1,1}$};
			\node[anchor=south] at (2/1.5,0) {$t_{1,2}$};
		\end{tikzpicture}
	};
	\node at (3.5,-3.5) {
		\begin{tikzpicture}
			\draw[gray] (0.4,0) -- (6.3,0);
			\draw[thick] (2.33,0.3) -- (2.33,-0.3); 
			\draw[thick] (3.66,0.3) -- (3.66,-0.3) 
			foreach \s in{1,...,9}{
				(\s/1.5,0) node[fill,black,circle, inner sep=1.5pt]{}
			};
		\node[anchor=south] at (1/1.5,0) {$t_{1,1}'$};
		\node[anchor=south] at (2/1.5,0) {$t_{1,2}'$};
		\end{tikzpicture}
	};
	\draw[->] (3.55,-3.3) -- (3.55,-2.3);
	\node[anchor=west] at (3.55,-2.8) {$\tau'$};
	\draw[->] (-3.45,-3.3) -- (-3.45,-2.3);
	\node[anchor=east] at (-3.45,-2.8) {$\tau$};
	\draw[->] (-1.5,0.2) -- (1.5,0.2);
	\node[anchor=south] at (0,0.1) {$\sim$};
	\node[anchor=north] at (0,0.2) {$\kappa$};
	%\draw[<-,thick] (-2.2,0.1) to [out=190,in=90] (-4,-2.3);
	%\draw[<-,thick] (2,0.1) to [out=0,in=90] (4,-2.3);
	%\draw[->,thick] (1.6,-1.4) to [out=20,in=90] (4,-2.3);
\end{tikzpicture}
	\caption{Isomorphism between two combinatorially equivalent Veronese polytopes, as described in \Cref{ex:circle-cut}.}
	\label{fig:circle-cut}
\end{figure}
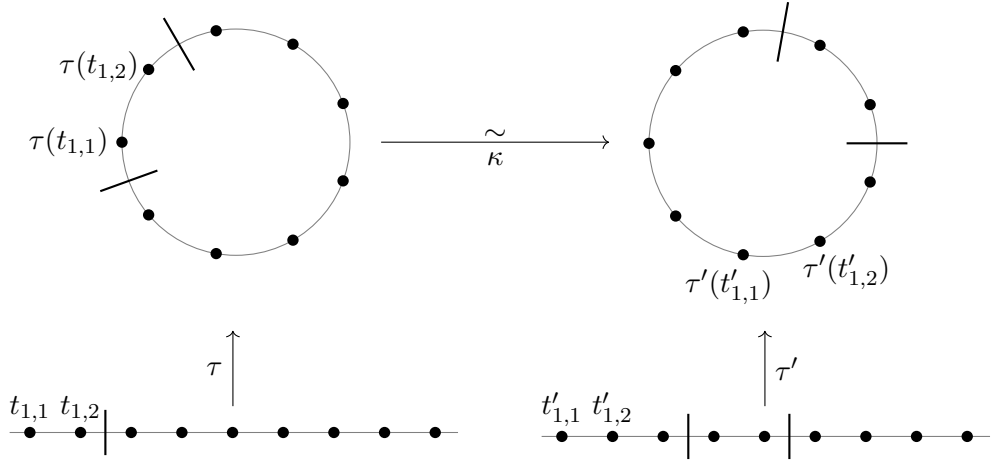

From the above statements, we obtain the following.

\begin{thm}\label{th:circular-compositions-comb-types}
There is a bijective correspondence between combinatorial types of Veronese $d$-polytopes and isomorphism classes of circular compositions with at most $d$ dividers. 
	This bijection is given by the construction of induced circular composition, and maps facets bijectively onto sets satisfying the circular facet condition.
\end{thm}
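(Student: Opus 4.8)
The plan is to read the theorem off from \Cref{th:circular-gale}, \Cref{cor:isomorphims} and \Cref{cor:more-isomorphisms}, the only genuinely new ingredient being surjectivity, i.e.\ that every circular composition in dimension $d$ is the circular composition induced by some Veronese $d$-polytope. First I would set up the map: to a Veronese $d$-polytope $P_\sigma(T)$ assign the isomorphism class of its induced circular composition. By \Cref{cor:more-isomorphisms} this class is independent of the auxiliary choices (base point, cyclic order on $\cT$) and of the chosen coordinate representation, and by \Cref{cor:isomorphims} combinatorially equivalent Veronese polytopes have isomorphic induced compositions; hence the assignment descends to a well-defined map $\mathcal{F}$ from combinatorial types of Veronese $d$-polytopes to isomorphism classes of circular compositions. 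Since a signed $\sigma$-decomposition has length $k\le d$ and the induced composition has $l\in\{k,k+1\}$ dividers with $l\equiv d\pmod 2$, the target of $\mathcal{F}$ is precisely the set of isomorphism classes of circular compositions in dimension $d$. Injectivity of $\mathcal{F}$ is the remaining implication of \Cref{cor:isomorphims}: isomorphic induced compositions force combinatorial equivalence. That $\mathcal{F}$ carries facets bijectively to the subsets satisfying the circular facet condition is exactly \Cref{th:circular-gale}, transported along the bijection $\tau\colon T\to\cT$, together with the fact that a Veronese polytope is simplicial (\Cref{prop:simplicial}), so that its combinatorial type is determined by the family of vertex sets of its facets.

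The core of the proof is surjectivity of $\mathcal{F}$. Let $(\cT,\cD)$ be a circular composition in dimension $d$, with $|\cT|=n\ge d+1$ and $|\cD|=l\le d$, $l\equiv d\pmod 2$. If $l=0$, then $d$ is even, and for any $n$-element $T\subset\R$ the trivial signed decomposition $T=I_1$ (length $k=0$, so $\parity(k)=\parity(d)$) induces a circular composition with $l'=k=0$ dividers whose cyclically ordered set is $\cT$ and $\cD'=\emptyset$; it is isomorphic to $(\cT,\emptyset)$, and it is realized e.g.\ by the cyclic polytope $C_d(n)$. If $l\ge 1$, choose a divider $\{a,b\}\in\cD$ with $b$ following consecutively after $a$, and cut the cyclic order of $\cT$ between $a$ and $b$: this yields a linear order $c_1<\dots<c_n$ on $\cT$ (with $c_1=b$, $c_n=a$) refining the cyclic order, in which the remaining $l-1$ dividers appear as $l-1$ pairs of adjacent elements. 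These pairs split $\{c_1<\dots<c_n\}$ into $l$ non-empty discrete intervals $I_1<\dots<I_l$; setting $k:=l-1\le d-1\le d$ and equipping the $I_j$ with alternating signs gives a signed decomposition of length $k$ with $\parity(k)=\parity(l-1)\ne\parity(l)=\parity(d)$. Transport it to $\R$ by any order-preserving identification $c_i\mapsto t_i$; as established earlier, every signed $\sigma$-decomposition of length at most $d$ is induced by a chamber $\sigma$ of the arrangement $\bigcup_{j}(\psi(\ell_j))^0$ (equivalently, one may take $\xi$ to be the coefficient vector of $\prod_{i=1}^{l-1}(t-s_i)$ for separators $s_i$ between $\max I_i$ and $\min I_{i+1}$), giving a Veronese polytope $P_\sigma(T)$. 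By the construction of the induced circular composition in the case $\parity(d)\ne\parity(k)$, $P_\sigma(T)$ has $l'=k+1=l$ dividers, with $\cI_j$ the image of $I_j$: the induced dividers $\{P_{j,m_j},P_{j+1,1}\}$, $j\in[l-1]$, are exactly the $l-1$ cut pairs, and the loop-closing divider $\{P_{l,m_l},P_{1,1}\}$ corresponds to the divider $\{a,b\}$ we cut along. Hence the identification $P_{j,i}\mapsto$ (the corresponding $c_i$) is a bijection preserving both the cyclic order and the set of dividers, so it is an isomorphism of circular compositions, and $\mathcal{F}$ attains the class of $(\cT,\cD)$.

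The step I expect to be the main obstacle is the bookkeeping in this surjectivity argument: verifying that ``cutting a circular composition open along one divider'' and ``reading off the induced circular composition of a Veronese polytope'' are mutually inverse up to isomorphism, matched correctly against the two parity cases of the construction preceding \Cref{th:circular-gale}, and checking the degenerate configurations (the case $l=0$; adjacent dividers sharing an endpoint, which produces singleton arcs $\cI_j$; intervals of size one). Everything else is a direct repackaging of \Cref{cor:isomorphims}, \Cref{cor:more-isomorphisms} and \Cref{th:circular-gale}.
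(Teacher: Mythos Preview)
Your proposal is correct and follows essentially the same approach as the paper: well-definedness and injectivity are read off from \Cref{cor:isomorphims} and \Cref{cor:more-isomorphisms}, the facet correspondence from \Cref{th:circular-gale}, and surjectivity is established by cutting the cyclic order open to obtain a signed decomposition realised by a chamber $\sigma$. The only cosmetic difference is that you always cut at a chosen divider (forcing $k=l-1$ and landing in the parity case $\parity(d)\neq\parity(k)$), whereas the paper cuts at an arbitrary base point (allowing $k\in\{l,l-1\}$) and leaves the verification that the induced composition recovers $(\cT,\cD)$ implicit; your version makes this bookkeeping explicit, which is a minor improvement in clarity rather than a different argument.
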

\begin{proof}
\Cref{th:circular-gale} and 
\Cref{cor:isomorphims,cor:more-isomorphisms}
provide the bijection from combinatorial types of Veronese $d$-polytopes onto their induced circular compositions, which necessarily have at most $d$ dividers.
It remains to
show that the isomorphism class $[(\cT,\cD)]$ of any circular composition with at most $d$ dividers occurs as an isomorphism class of an induced circular composition.
Therefore,
we a fix a representative $(\cT,\cD)$ and a base point $P \in \cT$, and proceed as follows.\par

We induce a linear order $<$ on $\cT$ by declaring $P$ to be the minimal element and for $x,y \in \cT \backslash \{ P \}$ we define $x<y$ if $(P,x,y) \in C$.
Note that points of $(\cT,<)$ can be parametrised by a order-preserving bijective map defined on a subset $T \subset \mathbb{R}$ of cardinality $|T|=|\cT|$ with the standard linear order, on which the dividers induce a partition  $T = \cup_{i=1}^{k+1} I_i$, $k \in \{|\cD|, |\cD|-1\}$.
Equipping the intervals with signs which alter with $i$, we determine a cone $\sigma$ in the hyperplane arrangement $\cup_{t \in T} (\nu_d(t))^0$: there are two such choices of signs, one corresponding to $\sigma$ and the other to $-\sigma$.
The choices of $T$ and $\sigma$ uniquely determine the combinatorial type $P_{\sigma}(T)$, whose induced circular composition is isomorphic with $(\cT,\cD)$ by construction. We thus showed that
the isomorphism class $[(\cT,\cD)]$ is an isomorphism class of an induced circular composition, thereby proving the claim.
\end{proof}

\begin{example}\label{ex:circle-cut}
	Let $d\geq 2$ be even. We consider the $\sigma$-decompositions 
	\begin{align*}
		T &= I_1 \cup I_2 = \{t_{1,1},t_{1,2}\}  \cup \{t_{2,1},\dots,t_{2,7}\}, \text{ and } \\
		T' &= I_1' \cup I_2' \cup I_3' = \{t_{1,1}',t_{1,2}',t_{1,3}'\} \cup \{t_{2,1}',t_{2,2}'\} \cup \{t_{3,1}',\dots,t_{3,4}'\}.
	\end{align*}
	The induced circular compositions $(\tau(T),\cD),(\tau'(T'),\cD')$ are isomorphic, via a composition of ``rotation and reflection'' $\kappa(P_{j,i}) = P'_{j,i}$, where $P_{j,i} = \tau(t_{j,i})$ and  $P'_{j,i} = \tau'(t'_{j,i})$ from the construction of induced compositions (see \Cref{fig:circle-cut}).
\end{example}

\begin{example}\label{ex:cyclic-dividers}
Let $P_\sigma(T)$ be a cyclic polytope realised by the trivial $\sigma$-decomposition $T=I_1$ (compare \Cref{d order curve}), and $C_d(T)$ denote the standard cyclic $d$-polytope generated by the points $T$.
The shape of the induced circular composition of $P_\sigma(T)$ depends on parity of the dimension $d$ as follows.
If $d$ is even, then $l=0$ and hence we have no dividers on the cyclically ordered $\cT$, $|\cT| = |T|$.
Therefore, the isomorphism class of the circular composition $(\cT, \emptyset)$ for $d$ even corresponds to the isomorphism class of $C_d(T)$. 
In the case when $|T|=d+2$, this isomorphism class contains also some circular compositions with two dividers (cf. \Cref{prop:xi-unit-direction-combinatorial}).

If $d$ is odd, then $l=1$ and we have exactly one divider.
Observe that any non-trivial $\sigma$-decomposition $T = I_1 \cup I_2$ in an odd dimension $d$ induces a circular composition with one divider as well.
Since any two circular compositions with exactly one divider are isomorphic in a fixed (odd) dimension, it must be $P_\sigma(T) \cong C_d(T)$.
Therefore, the isomorphism class of circular compositions corresponding $C_d(T)$ contains circular compositions $(\cT,\cD)$ with $|\cT|=|T|$ and $|\cD| = 1$.
\end{example}

\subsection{Combinatorial types}
\label{sec:combinatorial-types}

This section explores combinatorial types of Veronese polytopes, showing that it is a rich class containing many known polytopes.
\Cref{fig:comb-types} exemplifies some of the combinatorial types constructed in this section.

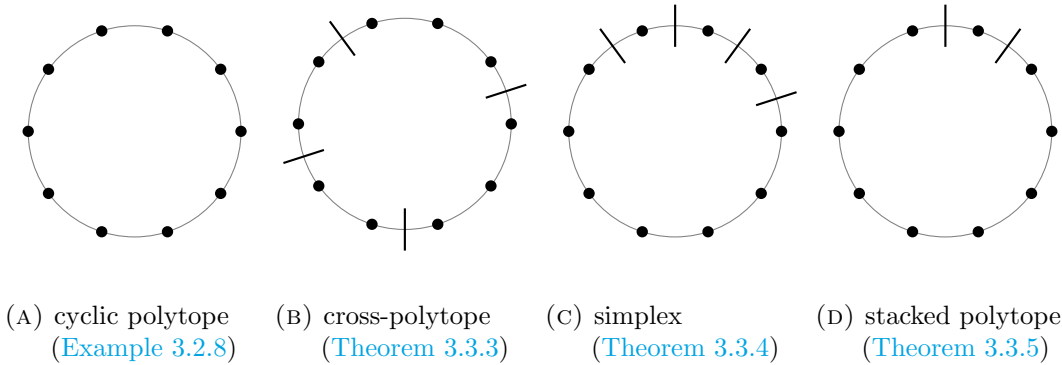
\begin{figure}[h]
	\begin{subfigure}[t]{0.23\textwidth}
		\centering
		\begin{tikzpicture}[scale=0.7]
	\def \n {10}
	\def \radius {2}
	\draw[gray] circle(\radius);
	\draw
	foreach\s in{1,...,\n}{
		(-360/\n*\s:-\radius) node[fill,circle, inner sep=1.5pt]{}
		node[anchor=-360/\n*\s]{$$}
	};
%	\draw[thick] (-360/\n*1.5:-\radius-0.5) -- (-360/\n*1.5:-\radius+0.5);
%	\draw[thick] (-360/\n*2.5:-\radius-0.5) -- (-360/\n*2.5:-\radius+0.5);
%	\draw[thick] (-360/\n*3.5:-\radius-0.5) -- (-360/\n*3.5:-\radius+0.5);
%	\draw[thick] (-360/\n*4.5:-\radius-0.5) -- (-360/\n*4.5:-\radius+0.5);
\end{tikzpicture}
		\caption{cyclic polytope \\ 
		\hspace*{1.4em} (\Cref{ex:cyclic-dividers})}
	\end{subfigure}
	\begin{subfigure}[t]{0.23\textwidth}
		\centering
		\begin{tikzpicture}[scale=0.7]
	\def \n {10}
	\def \radius {2}
	\draw[gray] circle(\radius);
	\draw
	foreach\s in{1,...,\n}{
		(-360/\n*\s:-\radius) node[fill,circle, inner sep=1.5pt]{}
		node[anchor=-360/\n*\s]{$$}
	};
	\draw[thick] (-360/\n*1.5:-\radius-0.4) -- (-360/\n*1.5:-\radius+0.4);
	\draw[thick] (-360/\n*4.5:-\radius-0.4) -- (-360/\n*4.5:-\radius+0.4);
	\draw[thick] (-360/\n*7.5:-\radius-0.4) -- (-360/\n*7.5:-\radius+0.4);
	\draw[thick] (-360/\n*9.5:-\radius-0.4) -- (-360/\n*9.5:-\radius+0.4);
\end{tikzpicture}
		\caption{cross-polytope \\ \hspace*{1.4em} (\Cref{th:cross})}
	\end{subfigure}
	\begin{subfigure}[t]{0.23\textwidth}
		\centering
		\begin{tikzpicture}[scale=0.7]
	\def \n {10}
	\def \radius {2}
	\draw[gray] circle(\radius);
	\draw
	foreach\s in{1,...,\n}{
		(-360/\n*\s:-\radius) node[fill,circle, inner sep=1.5pt]{}
		node[anchor=-360/\n*\s]{$$}
	};
	\draw[thick] (-360/\n*1.5:-\radius-0.4) -- (-360/\n*1.5:-\radius+0.4);
	\draw[thick] (-360/\n*2.5:-\radius-0.4) -- (-360/\n*2.5:-\radius+0.4);
	\draw[thick] (-360/\n*3.5:-\radius-0.4) -- (-360/\n*3.5:-\radius+0.4);
	\draw[thick] (-360/\n*4.5:-\radius-0.4) -- (-360/\n*4.5:-\radius+0.4);
\end{tikzpicture}
		\caption{simplex \\
			\hspace*{1.4em} (\Cref{th:simplex})}
	\end{subfigure}
	\begin{subfigure}[t]{0.23\textwidth}
		\centering
		\begin{tikzpicture}[scale=0.7]
	\def \n {10}
	\def \radius {2}
	\draw[gray] circle(\radius);
	\draw
	foreach\s in{1,...,\n}{
		(-360/\n*\s:-\radius) node[fill,circle, inner sep=1.5pt]{}
		node[anchor=-360/\n*\s]{$$}
	};
	%\draw[thick] (-360/\n*1.5:-\radius-0.5) -- (-360/\n*1.5:-\radius+0.5);
	\draw[thick] (-360/\n*2.5:-\radius-0.4) -- (-360/\n*2.5:-\radius+0.4);
	\draw[thick] (-360/\n*3.5:-\radius-0.4) -- (-360/\n*3.5:-\radius+0.4);
	%\draw[thick] (-360/\n*4.5:-\radius-0.5) -- (-360/\n*4.5:-\radius+0.5);
\end{tikzpicture}
		\caption{stacked polytope \\ \hspace*{1.4em} (\Cref{th:stacked})}
	\end{subfigure}
	\caption{Circular compositions corresponding to $4$-dimensional combinatorial Veronese polytopes on $10$ generating points. (A) and (D) have $10$ vertices, (B) has $8$ vertices and (C) has $5$ vertices.}
	\label{fig:comb-types}
\end{figure}

\begin{thm}\label{thm:comb-types-few-vertices}
	Every $d$-dimensional simplicial polytope on $d+1$, $d+2$ or $d+3$ vertices is combinatorially equivalent to a Veronese polytope.
\end{thm}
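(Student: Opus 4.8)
The plan is to reduce, via \Cref{th:circular-compositions-comb-types}, to producing a circular composition with the required face lattice for each combinatorial type; equivalently, and as I would proceed here, to choose $T$ and $\xi$ and read off the faces of $P_\xi(T)$ from the Gale condition \Cref{geometric Gale}. The first step is a direct sign computation. Write $T=\{t_1<\dots<t_n\}$ with $n\in\{d+1,d+2,d+3\}$, take $S\subset T$ of cardinality $d$, and set $\epsilon_m:=(-1)^m\sgn_\sigma(t_m)$. Evaluating $\sgn p_S$ at the $n-d$ points of $T\setminus S$ and multiplying by $\sgn_\sigma$ as in \Cref{geometric Gale c}, one obtains: if $n=d+2$ and $T\setminus S=\{t_i,t_j\}$ with $i<j$, then $S$ corresponds to a facet if and only if $\epsilon_i\neq\epsilon_j$; if $n=d+3$ and $T\setminus S=\{t_i<t_j<t_k\}$, then $S$ corresponds to a facet if and only if $\epsilon_i=\epsilon_k\neq\epsilon_j$. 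Moreover, as $\sigma$ ranges over the chambers of the arrangement, $\sgn_\sigma$ ranges over all sign vectors on $n$ points with at most $d$ sign changes, so $\epsilon$ ranges over all sign vectors with at least $n-1-d$ sign changes: no restriction when $n=d+1$, ``non-constant'' when $n=d+2$, and ``at least two sign changes'' when $n=d+3$.

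For $n=d+1$ the only simplicial $d$-polytope is the $d$-simplex, which is the cyclic polytope $C_d(d+1)=P_{\epsilon_0}(T)$ for any $T$ of size $d+1$, hence Veronese. For $n=d+2$ I would use the classical fact \cite[\S6.2]{grunbaum1967convex} that the simplicial $d$-polytopes on $d+2$ vertices are exactly the free sums $\Delta_a\oplus\Delta_b$ of an $a$- and a $b$-simplex with $a+b=d$ and $a,b\geq1$; a $d$-subset of vertices is a facet precisely when its complementary pair meets both summands. Choosing $\epsilon$ to consist of $a+1$ plus signs followed by $b+1$ minus signs — a non-constant, hence admissible, vector — the description above shows that the complementary pairs of facets of $P_\xi(T)$ are exactly the pairs $\{t_i,t_j\}$ with $\epsilon_i\neq\epsilon_j$. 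Identifying the ``$+$''-indices with the vertices of one summand and the ``$-$''-indices with those of the other yields $P_\xi(T)\cong\Delta_a\oplus\Delta_b$, and since each sign class has size at least $2$ every generating point is a vertex.

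For $n=d+3$ I would invoke the classification of simplicial $d$-polytopes on $d+3$ vertices via their reduced Gale diagrams \cite[\S6.3]{grunbaum1967convex}: up to combinatorial equivalence such a polytope corresponds to a configuration of $d+3$ points on the circle $\mathbb{S}^1$ with no two antipodal (a standard Gale diagram), and a $d$-subset of vertices is a facet if and only if its complementary triple is not contained in any closed semicircle. The remaining task is to encode every such configuration as a linear order $t_1<\dots<t_{d+3}$ of the vertices together with a sign vector $\epsilon$ having at least two sign changes, so that ``the sorted complementary triple $\{t_i,t_j,t_k\}$ is not contained in a closed semicircle'' translates into ``$\epsilon_i=\epsilon_k\neq\epsilon_j$''. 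Concretely, I would bring the Gale diagram into standard form, so that its occupied directions sit among the vertices of a regular $2m$-gon, label the $d+3$ vertices following the cyclic order of the Gale points after cutting the circle at a generic point, and define $\epsilon_m$ according to which of the two arcs between consecutive occupied directions this cut point lies on; the ``opposite vertex of the $2m$-gon'' structure then turns the semicircle condition into exactly the required parity pattern, while the admissible multiplicity vectors of standard Gale diagrams match precisely the admissible sign vectors $\epsilon$ (equivalently, the signed $\sigma$-decompositions of length at most $d$).

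The routine parts are the initial sign computation and the cases $n=d+1$ and $n=d+2$. The main obstacle is this last dictionary: checking that every admissible standard Gale diagram on $d+3$ vertices produces a sign vector $\epsilon$ with the correct number of sign changes, and that the closed-semicircle cofacet condition is genuinely equivalent to $\epsilon_i=\epsilon_k\neq\epsilon_j$. This is a finite but delicate case analysis of how the chosen cut point interacts with the occupied directions and their antipodes; alternatively it can be organised entirely in the circular-composition language of \Cref{th:circular-gale}, matching standard Gale diagrams with cyclic compositions of $d+3$ into at most $d$ parts of the parity of $d$.
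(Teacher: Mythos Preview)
Your route differs substantially from the paper's, and the $n=d+3$ case is, as you yourself say, only a plan. The paper avoids all combinatorics with a single classical fact from algebraic geometry: through any $d+3$ points in general position in $\mathbb{P}^d$ there passes a unique rational normal curve \cite[Chapter~1]{harris2013algebraic}. One first replaces the given simplicial polytope by a combinatorially equivalent one with vertices in general position, embeds its ambient affine $d$-space as an affine chart in a $(d+1)$-dimensional vector space $U$, cones over the polytope, and projectivises the extreme rays; this yields $n\le d+3$ points in general position in $\mathbb{P}(U)$. A rational normal curve through them identifies $\mathbb{P}(U)$ with $\mathbb{P}(S^dW^*)$ and realises the original polytope directly as a section of a compatible cone, hence as a Veronese polytope. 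For $n=d+2$ or $n=d+1$ the curve is not unique, but existence is all that is needed. No classification of simplicial polytopes on few vertices, no Gale diagrams, and no matching with $\sigma$-decompositions enter.

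Your approach is correct and pleasantly explicit for $n\le d+2$: the sign computation is right, and one sees exactly which $\sigma$-decomposition produces $\Delta_a\oplus\Delta_b$. For $n=d+3$, however, the dictionary between standard Gale diagrams on $\mathbb{S}^1$ and sign vectors $\epsilon$ carries the full weight of the argument, and you have only sketched it. You would need to check, for every reduced Gale diagram on $d+3$ points, that a cut point and labelling can be chosen so that the resulting $\epsilon$ has at most $d$ sign changes and the cofacet condition ``the complementary triple lies in no closed semicircle'' becomes exactly $\epsilon_i=\epsilon_k\ne\epsilon_j$. This is plausible and, if carried out, would yield explicit $\sigma$-decompositions for each combinatorial type, which the paper's proof does not; but until that case analysis is actually written the proposal is not a proof. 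The paper's argument trades that explicitness for a two-line existence statement and sidesteps the difficulty entirely.
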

\begin{proof}
	Since every simplicial polytope is combinatorially equivalent with a simplicial polytope whose vertices are in general position, we consider, in particular, polytopes $P_1$, $P_2$ and $P_3$ whose $d+1$, $d+2$ and $d+3$ vertices are in a general position, respectively. By embedding the $d$-dimensional affine space containing $P_i$ into a $(d+1)$-dimensional vector space $U$ as an affine chart and erecting a pointed and full-dimensional cone over the polytope we obtain, by projectivising the extremal rays of the cone, $d+1$, $d+2$ or $d+3$ projective points in general position in a $d$-dimensional projective space $\mathbb{P}(U)$. \par 
	
	Recall from \cite[Chapter 1]{harris2013algebraic} that the rational normal curve is uniquely determined by $d+3$ projective points in general position, meaning that one can write a unique map from a projective line $\mathbb{P}(W)$, for some 2-dimensional $W$, into $\mathbb{P}(U)$ which, in the coordinates on $\mathbb{P}(U)$ uniquely determined by this procedure, is given by $[x:y] \mapsto [b_1([x:y] : \cdots : b_{d+1}([x:y]))]$, where $b_1,\ldots,b_{d+1}$ is a basis of homogeneous polynomials of degree $d$ in 2 variables. This allows us to identify $\mathbb{P}(U)$ and $\mathbb{P}(S^dW^*)$, and see such a map $\mathbb{P}(W) \to \mathbb{P}(U)$ as the Veronese embedding $\mathbb{P}(W) \to \mathbb{P}(S^dW^*)$, $\ell \mapsto \ell^0 \otimes \cdots \otimes \ell^0$. \par
	
	Returning to our setting, $d+3$ points uniquely determine the identification, and hence $P_3$ is realised as the affine hyperplane section of a cone compatible with the Veronese factorization structure, i.e., $P_3$ is a Veronese polytope. In case of $d+2$ points, we have a $(d-1)$-dimensional family of rational normal curves passing through these points, showing that there are multiple ways how $P_2$ is combinatorially equivalent with a Veronese polytope. Similarly, in case of $d+1$ points, there are several ways how to realise the simplex $P_1$ as a Veronese polytope.
\end{proof}

	The examples displayed in 
	\Cref{fig:comb-types} show that not all generating points are necessarily vertices. We now give a full characterization of vertices.

\begin{thm}\label{vertices}
Let $\polyc(T)$ be a Veronese polytope with the induced circular composition $(\cT,\cD)$.
If $|\cD|<d$, then all generating points $\cT$ correspond to vertices of $\polyc(T)$. If $|\cD|=d$, then the set of vertices corresponds to the union of its dividers.
\end{thm}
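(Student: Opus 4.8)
We work throughout with the combinatorial description of facets from \Cref{th:circular-gale}. Since every vertex of a polytope lies in some facet, that description shows that a generating point of $\polyc(T)$ is a vertex if and only if the corresponding point of $\cT$ lies in some $\cS\in\cfacets(\cT)$. Thus the theorem is equivalent to the assertion that $\bigcup_{\cS\in\cfacets(\cT)}\cS$ equals $\cT$ when $|\cD|<d$, and equals $\bigcup_{D\in\cD}D$ when $|\cD|=d$.

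The case $|\cD|=d$ is immediate: then $d-|\cD|=0$, so \eqref{circgale:pairs} forces $\cS_2=\emptyset$ and $\cfacets(\cT)$ consists exactly of the sets of $d$ pairwise distinct divider representatives, one from each divider. Each such set is contained in $\bigcup_{D\in\cD}D$, so no point outside this union is a vertex. Conversely, with the labelling \eqref{labelling}, the set $\{P_{1,1},\dots,P_{l,1}\}$ is such a set (assign $P_{j,1}$ to the divider $\{P_{j-1,m_{j-1}},P_{j,1}\}$, indices modulo $l$) and contains every $P_{j,1}$; and an endpoint $P_{j,m_j}$ with $m_j\geq2$ is handled by the small modification of assigning $P_{j,m_j}$ to the unique divider it bounds and keeping a $P_{\cdot,1}$ for every other divider, which again yields $d$ distinct representatives since $P_{j,m_j}$ is not of the form $P_{\cdot,1}$. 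As every divider endpoint has one of these two forms, $\bigcup_{D\in\cD}D$ is precisely the vertex set.

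Assume now $|\cD|=l<d$; then $l\equiv d\pmod 2$, so $d-l\geq2$, and $n:=|\cT|\geq d+1\geq l+3$. Fix $P\in\cT$; we construct $\cS=\cS_1\sqcup\cS_2\in\cfacets(\cT)$ with $P\in\cS$. First one selects the divider representatives $\cS_1$ so that, after deleting them, the set $\cT\setminus\cS_1$ — a disjoint union of at most $l$ sub-arcs of total size $n-l$, across which every usable consecutive pair lies inside a single sub-arc — satisfies: (a) the chosen representatives are pairwise distinct; (b) at most one sub-arc has odd size; (c) if $P$ is a divider endpoint then $P\in\cS_1$, and otherwise $P$ lies in an even sub-arc of $\cT\setminus\cS_1$. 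Given such $\cS_1$, the sub-arcs of $\cT\setminus\cS_1$ together contain pairwise disjoint consecutive pairs numbering at least $(n-l-1)/2\geq(d-l)/2$, and when $P\notin\cS_1$ this family may be taken to cover $P$, since the sub-arc containing $P$ is then even and so has a perfect matching (in the single exceptional configuration $l=1$ with $n$ even, one instead invokes the elementary fact that a path has a maximum matching through any prescribed vertex). Discarding pairs down to exactly $(d-l)/2$ of them while keeping the pair through $P$ gives $\cS_2$, and $\cS_1\sqcup\cS_2$ is the required facet. The case $l=0$ is even simpler: a facet is a set of $d/2$ disjoint consecutive pairs, and since $n\geq d+1$ one can choose $d/2$ of them, one through $P$.

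The crux — and the step I expect to be the main obstacle — is the existence of a choice of $\cS_1$ satisfying (a)--(c). I would prove it by viewing each divider as "pointing" to one of its two adjacent arcs and thereby deleting that arc's end: a sub-arc has even size precisely when a certain equality/inequality relation holds between the pointers of its two bounding dividers, and the $l$ such relations form a cyclic system whose number of inequality-type relations has the same parity as $n-d$. If $n\equiv d\pmod 2$, the system is satisfiable, making every sub-arc even, with exactly two solutions related by reversing all pointers — freedom enough to additionally force (c). If $n\not\equiv d\pmod 2$, one can satisfy all relations but one and is free to choose which to violate; violating it away from the arc containing $P$ yields (c), while violating it away from any length-one arc — using the reversal freedom where necessary to violate it in whichever of the two ways avoids a clash of representatives — yields distinctness (a), since two representatives can coincide only at a length-one arc whose relation is violated "inward". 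The existence of the length-$\geq2$ arc that this uses follows from $n\geq l+3$.
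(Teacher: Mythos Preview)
Your proof is correct and takes a genuinely different route from the paper. The paper argues by \emph{local modification}: it starts from an arbitrary existing facet $\cS$ and, if the target point $P$ is missing, swaps a consecutive pair in $\cS_2$ for one containing $P$, handling a short case split on where $P$'s cyclic neighbour $Q$ sits relative to $\cS$ and invoking the fact that a vertex cannot lie in every facet to deal with divider endpoints. Your approach is instead a \emph{global construction}: you encode a choice of $\cS_1$ as pointer variables $x_1,\dots,x_l\in\{0,1\}$, reduce conditions (a)--(c) to a cyclic system of parity constraints $x_{j-1}+x_j\equiv m_j+1\pmod 2$, and solve that system explicitly. The paper's argument is shorter and sidesteps the $n\bmod 2$ case analysis entirely; yours is more explicit about the structure of $\cS_1$ and produces a concrete facet rather than one obtained from an unspecified starting point.

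One expository gap is worth flagging. When $n\not\equiv d$ and $P$ is a divider endpoint, your sentence ``violating it away from the arc containing $P$ yields (c)'' addresses only the non-endpoint branch of (c); making $P$'s sub-arc even does not by itself force $P\in\cS_1$. What actually works here is to violate the relation at any length-$\geq 2$ arc (such an arc exists since $n\geq l+3$) and then use the reversal freedom to set the single $x_i$ controlling $P$ --- no collision arises because the violated arc has length $\geq 2$. Conversely, when $P$ is \emph{not} an endpoint and the only length-$\geq 2$ arc is $P$'s own, you must violate at a length-one arc $j\neq j_0$ and spend the reversal freedom on the outward direction there; this still leaves $P$'s sub-arc even. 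You have all the ingredients for both cases, but the two uses of the reversal freedom (for (a) in one case, for $P\in\cS_1$ in the other) should be separated explicitly rather than folded into a single clause.
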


\begin{proof}
	We use the notation as introduced in \Cref{sec:cyclic-gale}.
	Fix
	 $P_{j,i} \in \cT$ for some $j \in [l]$, $i\in [m_j]$, and $\cS = \cS_1 \cup \cS_2 \subset \cT, |\cS|=d$, corresponding to a facet.
	Note that in order to show that $P_{j,i}$ is a vertex, it is enough to show that $P_{j,i}$ belongs to a facet since every facet-supporting hyperplane intersects the factorization curve in at most $d$ points, i.e., the vertices of the facet.
	Thus, if $P_{j,i} \in \cS$ then $P_{j,i}$ is a vertex, so from now on we assume that we assume that $P_{j,i} \not \in \cS$,
	 and construct $\cS' \subset \cT$, $|\cS'|=d$, 
	 corresponding to a facet and containing 
	 $P_{j,i}$.
	We distinguish if $P_{j,i}$ belongs to a divider, or not.
	
	Suppose $P_{j,i}$ is contained in a divider, i.e., $\{P_{j,i},Q\} \in \cD$.
	Then, using \eqref{circgale:dividers} (from \Cref{circular Gale}), we find that $Q \in \cS_1$, and so $Q$ corresponds to a vertex. However, since a vertex cannot be contained in all facets of $\polyc(T)$, there exists a facet $\cS'$ such that $Q \notin \cS'$.
	The condition \eqref{circgale:dividers} implies $P_{j,i} \in \cS'$, and so $P_{j,i}$ corresponds to a vertex.
	This shows that every point contained in a divider is a vertex. In particular, for $|\cD|=d$ the condition \eqref{circgale:dividers} implies that each facet of $\polyc(T)$ has vertices contained only in dividers, thus proving the second claim.

		Suppose $P_{j,i} \in I_j$ does not belong to a divider, therefore $|I_j|\geq 3$, and assume that $|\cD| < d$.
		Then there exists a generating point $Q \in \cI_j$ such that $\{P_{j,i}, Q\}$ is a consecutive pair (in fact, there exist exactly two such points.)  Observe that $|\cD| < d$ implies $\cS_2 \neq \emptyset$, and thus there exists a consecutive pair $\{ a,b \} \in \cS_2$. We have the following cases.
		\begin{enumerate}
			\item If $Q \in \cT \setminus \cS$, then $S' = (\cS \setminus \{a,b\})\cup \{P_{j,i},Q\}$ satisfies \eqref{circgale:dividers} and \eqref{circgale:pairs}, and hence is a facet of $\polyc(T)$ containing $P_{j,i}$. \label{item:vertices-case}
			\item If $Q \in \cS_2$, then \eqref{circgale:pairs} implies that $\{Q,Q'\} \subset \cS_2$ form a consecutive pair for some $Q' \in \cT$, and that $(\cS \setminus \{Q, Q'\}) \cup \{P_{j,i},Q\}$ corresponds to a facet containing $P_{j,i}$.
			\item If $Q \in \cS_1$ such that $\{P_{j,i},Q\}$ form a consecutive pair, then, as before, $Q$ cannot be contained in all facets of $\polyc(T)$, and hence there exists a facet $\cS'$ not containing $Q$. If $P_{j,i} \in \cS'$, then it corresponds to a vertex. Otherwise, $\{Q,P_{j,i}\} \subset \cT\setminus \cS$ form a consecutive pair, which is covered by case \eqref{item:vertices-case}.
		\end{enumerate}
	\vspace*{-1.5em}
\end{proof}

Recall that the $d$-dimensional \emph{cross-polytope} in $\R^d$ is the convex hull of $\pm e_1,\dots,\pm e_d$, where $e_1,\dots,e_d$ is the standard basis of $\R^d$.

\begin{thm}[Cross-polytope]\label{th:cross}
Let $|T|=n$ and $\polyc(T)$ be a Veronese $d$-polytope, $d \geq 3$, such that the signed $\sigma$-decomposition partitions $T$ into $d+1$ intervals, $T = \bigcup_{j=1}^{d+1} I_j$.
If $|I_j| \geq 2$ for all $j \in [d+1]$, then $\poly(T)$ is combinatorially equivalent to the $d$-dimensional cross-polytope.
\end{thm}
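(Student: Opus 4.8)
The plan is to reformulate the hypothesis in the language of circular compositions and then match the resulting combinatorial data with that of the cross-polytope. Since the signed $\sigma$-decomposition partitions $T$ into $d+1$ intervals we are in the case $k=d$ of \Cref{sec:cyclic-gale}; because $\parity(d)=\parity(k)$, the induced circular composition $(\cT,\cD)$ of $\polyc(T)$ has exactly $l=k=d$ dividers, and the arcs $\cI_1,\dots,\cI_d$ of $\cT$ have sizes $m_1=|I_2|,\dots,m_{d-1}=|I_d|$ and $m_d=|I_{d+1}|+|I_1|$. The first step is to note that the hypothesis $|I_j|\ge 2$ for all $j$ forces $m_j\ge 2$ for every $j\in[d]$, so within each arc $\cI_j$ the endpoints $P_{j,1}$ and $P_{j,m_j}$ are distinct; since the arcs partition $\cT$ and the $d$ dividers are $\{P_{j,m_j},P_{j+1,1}\}$ for $j\in[d-1]$ together with $\{P_{d,m_d},P_{1,1}\}$, their $2d$ endpoints are pairwise distinct. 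By \Cref{vertices} (the case $|\cD|=d$) the vertex set of $\polyc(T)$ is precisely the union of the dividers, so $\polyc(T)$ has exactly $2d$ vertices.

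Next I would read off the facets from \Cref{th:circular-gale}: a $d$-subset $S\subset T$ corresponds to a facet if and only if $\tau(S)$ admits a decomposition $\cS_1\sqcup\cS_2$ satisfying \eqref{circgale:dividers}, \eqref{circgale:pairs}. Because $|\cD|=l=d$, condition \eqref{circgale:pairs} forces $\cS_2$ to be a union of $\tfrac{d-l}{2}=0$ consecutive pairs, hence $\cS_2=\emptyset$; so the facets are exactly the transversals of $\cD$ picking one endpoint from each of the $d$ dividers, and conversely every such transversal has cardinality $d$ (the dividers being pairwise disjoint) and satisfies \eqref{circgale:dividers}, \eqref{circgale:pairs}. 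Thus $\polyc(T)$ has precisely $2^d$ facets, each a choice of one endpoint per divider.

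To finish, I would set up the bijection $\kappa$ from the $2d$ vertices of $\polyc(T)$ onto $\{\pm e_1,\dots,\pm e_d\}$ sending the two endpoints of the $j$-th divider to $e_j$ and $-e_j$, where $e_1,\dots,e_d$ is the standard basis of $\R^d$. Under $\kappa$ the facets of $\polyc(T)$ correspond exactly to the $d$-subsets of $\{\pm e_1,\dots,\pm e_d\}$ containing one of $\{e_j,-e_j\}$ for each $j$, which are precisely the facets of the cross-polytope $\conv(\pm e_1,\dots,\pm e_d)$. Since both $\polyc(T)$ and the cross-polytope are simplicial — the former by \Cref{prop:simplicial} — the face lattice of each is the family of subsets of its vertex set that are contained in some facet, ordered by inclusion; hence $\kappa$ extends to an isomorphism of face lattices, and $\polyc(T)$ is combinatorially equivalent to the $d$-dimensional cross-polytope.

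I expect the only genuinely delicate point to be the first step: verifying that the $2d$ divider endpoints are pairwise distinct. This requires tracking which interval sizes become which arc sizes $m_j$ — in particular that the last arc $\cI_d$ amalgamates $I_{d+1}$ and $I_1$, so that $|I_1|,|I_{d+1}|\ge 2$ is used only through $m_d\ge 2$ — and it is exactly here that the hypothesis $|I_j|\ge 2$ enters. Once the dividers are known to be pairwise disjoint, everything else is a formal consequence of \Cref{th:circular-gale}, \Cref{vertices}, and the fact that a simplicial polytope is determined by its collection of facets.
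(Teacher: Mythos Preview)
Your proof is correct and arrives at the same conclusion as the paper, but by a slightly different route. The paper works directly with \Cref{lem:facet-roots} in the linear setting: since $k=d$, condition \eqref{condition-1} forces $S=S_1$ with $S_1^j\in\{t_{j,n_j},t_{j+1,1}\}$ for $j\in[d]$, and setting $e_j^-:=t_{j-1,n_{j-1}}$, $e_j^+:=t_{j,1}$ for $j=2,\dots,d+1$ matches the cross-polytope facet description at once. You instead pass to the induced circular composition $(\cT,\cD)$, invoke \Cref{th:circular-gale} and \Cref{vertices}, and read off the cross-polytope structure from the $d$ pairwise disjoint dividers. The two arguments are logically equivalent --- \Cref{th:circular-gale} is a repackaging of \Cref{lem:facet-roots} --- but yours makes the vertex count explicit via \Cref{vertices}, and, as you observe in your last paragraph, reveals that the hypothesis on $|I_1|$ and $|I_{d+1}|$ enters only through $m_d=|I_1|+|I_{d+1}|\ge 2$, which is automatic. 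The paper's argument is more self-contained and shorter, but leaves this refinement implicit.
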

\begin{proof}
	The polytope $P_\sigma(T)$ is combinatorially equivalent to the $d$-dimensional cross-polytope if and only if  it is has $2d$ vertices $e_1^+,\dots,e_d^+,e_1^-,\dots,e_d^- \in T$ and a subset $S \subset T$ of cardinality $d$ forms a facet if and only if $|S \cap \{e_j^+, e_j^-\} |=1$ for all $j \in [d]$. 
	Recall from \Cref{lem:facet-roots}, that $S \subset T$ corresponds to a facet of $\polyc(T)$ if and only if $S = S_1 \sqcup S_2 \sqcup S_3$ satisfies \eqref{condition-1}--\eqref{condition-3}, and since $|S_1| = d$, we have $S_2 = S_3 = \emptyset$.
	Finally, if we define $e_j^- := t_{j-1,n_{j-1}}$ and $e_j^+ := t_{j,1}$, $j =2,\dots,d+1$, where $n_j = |I_j|$, then \eqref{condition-1} is equivalent to $\polyc(T)$ being a cross-polytope.
\end{proof}

\begin{thm}[Simplex]\label{th:simplex}
Let $|T|=n$ and $\polyc(T)$ be a Veronese $d$-polytope, $d \geq 3$, such that the signed $\sigma$-decomposition partitions $T$ into $d+1$ intervals, $T = \bigcup_{j=1}^{d+1} I_j$.
If $|I_1| = |I_2| = \dots = |I_{d}| = 1$ and $|I_{d+1}| = n-d$, then $\polyc(T)$ is a $d$-dimensional simplex.
\end{thm}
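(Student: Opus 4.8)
The plan is to show that $\polyc(T)$ has exactly $d+1$ vertices; since $\polyc(T)$ is a $d$-dimensional polytope and the only $d$-polytope with $d+1$ vertices is the $d$-simplex, this finishes the proof. The first step is to count dividers: the signed $\sigma$-decomposition partitions $T$ into $k+1 = d+1$ discrete intervals, so $k = d$, and since then $\parity(d) = \parity(k)$, the construction of \Cref{sec:cyclic-gale} gives $l = k = d$. Hence the induced circular composition $(\cT, \cD)$ has $|\cD| = d$, and \Cref{vertices} applies: the vertices of $\polyc(T)$ correspond precisely to the union $\bigcup_{\{a,b\} \in \cD} \{a,b\}$ of its dividers.

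The second step is to compute this union explicitly. Because $\parity(d) = \parity(k)$ and $|I_j| = 1$ for $j \in [d]$, the relabelling of \Cref{sec:cyclic-gale} gives $m_j = |I_{j+1}| = 1$ for $j \in [d-1]$ and $m_d = |I_{d+1}| + |I_1| = (n-d) + 1 \geq 2$; that is, the arcs $\cI_1,\dots,\cI_{d-1}$ are singletons $\{P_{j,1}\}$ while $\cI_d = \{P_{d,1},\dots,P_{d,n-d+1}\}$ is the unique non-singleton arc (it absorbs both $I_1$ and $I_{d+1}$). Reading off $\cD = \{\{P_{j,1}, P_{j+1,1}\} \mid j \in [d-1]\} \cup \{\{P_{d,n-d+1}, P_{1,1}\}\}$, the union of all dividers is $\{P_{1,1},\dots,P_{d,1}\} \cup \{P_{d,n-d+1}\}$, which has exactly $d+1$ elements since $P_{d,n-d+1}$ lies in $\cI_d$ and is distinct from $P_{d,1}$ as $m_d \geq 2$. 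Thus $\polyc(T)$ has $d+1$ vertices and is a $d$-simplex.

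An equivalent route, which makes the simplicial structure fully transparent, bypasses \Cref{vertices} and argues from \Cref{lem:facet-roots} directly: since $|S_1| = k = d = |S|$, every facet $S$ forces $S_2 = S_3 = \emptyset$, so the facets are exactly the sets $\{S_1^1,\dots,S_1^d\}$ with $S_1^j \in \{t_{j,1}, t_{j+1,1}\}$ (using $n_j = 1$ for $j \in [d]$). Writing $v_i := t_{i,1}$ for $i \in [d+1]$, a short greedy argument shows that choosing one endpoint from each edge of the path $v_1 - v_2 - \cdots - v_{d+1}$ so that all $d$ choices are distinct forces the omission of a single vertex $v_i$, hence the facets of $\polyc(T)$ are precisely the sets $\{v_1,\dots,v_{d+1}\} \setminus \{v_i\}$ for $i \in [d+1]$ — the boundary complex of the $d$-simplex on $\{v_1,\dots,v_{d+1}\}$.

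I do not expect a genuine obstacle here; the only care needed is the bookkeeping in the first approach, namely tracking through the relabelling of \Cref{sec:cyclic-gale} that $\cI_d$ is the unique non-singleton arc and that $P_{d,n-d+1}$ is not already among $P_{1,1},\dots,P_{d,1}$. Once that is in place, both arguments are immediate, and I would present whichever the referee finds cleaner — probably the \Cref{lem:facet-roots} version, since it exhibits the facets of the simplex directly rather than only counting vertices.
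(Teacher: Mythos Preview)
Your proposal is correct, and your second route via \Cref{lem:facet-roots} is essentially the paper's own proof: the paper observes that $k=d$ forces $S=S_1$, hence $S\subset\{t_{1,1},\dots,t_{d+1,1}\}$, so there are at most $d+1$ vertices and the full-dimensional polytope must be a simplex. The paper stops there and does not carry out your greedy enumeration of the facets, which is a pleasant extra but not needed for the statement. Your first route via \Cref{vertices} is a legitimate alternative and the bookkeeping you flag is handled correctly; it is just slightly heavier machinery for the same conclusion.
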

\begin{proof}
	Let $S \subseteq T$ correspond to a facet. Then by \Cref{lem:facet-roots} we have that $S = S_1$, satisfying \eqref{condition-1}. For $j=1,\dots,d$ we have $t_{j,1} = t_{j,n_{j}}$, so $S \subset \{t_{1,1},t_{2,1},\dots,t_{d,1},t_{d+1,1}\}$. It follows that $\polyc(T)$ has at most $d+1$ vertices. By \Cref{prop:simplicial}, $\polyc(T)$ is full-dimensional, and thus $\polyc(T)$ is a $d$-dimensional simplex.
\end{proof}

We note that the conditions of \Cref{th:simplex} are not the only ones for which $\polyc(T)$ is a simplex. For example, if $d=n-1$ then $\poly(T)$ is a simplex for any $T = \bigcup_{j=1}^{k+1} I_j$. 

In the following, we show that for every $d$ and $n$, there exists a $d$-dimensional Veronese polytope that is a stacked polytope. 
A polytope is \emph{stacked} if it can be built from a $d$-dimensional simplex by a sequence of stackings. To define the stacking operation, let $P = \{x \in \R^d \mid \forall i \in [m]: \ \inner{x,u_i} \geq \alpha_i \}$ be a $d$-polytope with facets $F_i = \{x \in P \mid \inner{x,u_i} = \alpha_i\}$, $i \in [m]$. Fix $j \in [m]$ and $p \in \R^d$ such that $\inner{p,u_i} > \alpha_i$ for all $i \neq j$, and $\inner{p,u_j} < \alpha_j$. Then a \emph{stacking} onto the facet $F_j$ is the polytope $\conv(p,P)$, whose combinatorial type only depends on the choice of $j \in [m]$. Indeed, the facets of $\conv(P,p)$ are 
\begin{enumerate}[label={\arabic*)}]
	\item $F_i, i \in [m] \setminus \{j\}$,
	\item pyramids $\conv(G,p)$, where $G$ ranges over all $(d-2)$-dimensional facets of $F_j$.
\end{enumerate}
It is known that in dimensions $d\geq 4$, stacked polytopes are precisely the minimizers of the lower bound theorem, minimizing the number of facets of a simplicial polytope.

\begin{thm}[Stacked polytope]\label{th:stacked}
Let $|T|=n$ and $\polyc(T)$ be a Veronese $d$-polytope, $d \geq 3$, such that the signed $\sigma$-decomposition partitions $T$ into $d-2$ intervals, $T = \bigcup_{j=1}^{d-2} I_j$.
If $|I_1| = |I_2| = \dots = |I_{d-3}| = 1$ and $|I_{d-2}| = n-(d-3)$, then $\polyc(T)$ is a $d$-dimensional stacked polytope on $n$ vertices.
\end{thm}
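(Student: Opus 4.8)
The plan is to prove the statement by induction on $n$, realising $\polyc(T)$ as the result of stacking a single new vertex onto a Veronese polytope of the same shape on $n-1$ generating points. Since $T=\bigcup_{j=1}^{d-2}I_j$ has $k+1=d-2$ intervals we have $k=d-3$, so $d$ and $k$ have opposite parities and the induced circular composition has $l=k+1=d-2<d$ dividers; by \Cref{vertices} every generating point is then a vertex, so $\polyc(T)$ has exactly $n$ vertices. For the base case $n=d+1$, \Cref{prop:simplicial} makes $\polyc(T)$ a simplicial $d$-polytope with $d+1$ vertices, i.e.\ a $d$-simplex, which is stacked (no stackings).

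For the inductive step assume $n\ge d+2$ and write $T=\{t_1<\dots<t_n\}$, so $I_j=\{t_j\}$ for $j\in[d-3]$ and $I_{d-2}=\{t_{d-2},\dots,t_n\}$; fix $\xi\in\interior(\sigma)$ and put $v_i=\moment(t_i)/q_\xi(t_i)$, so $\polyc(T)=\conv(v_1,\dots,v_n)$. I would delete the second-largest element: set $T'=T\setminus\{t_{n-1}\}$ and let $\sigma'$ be the chamber of $\bigcup_{t\in T'}(\moment(t))^0$ containing $\xi$, so $P_{\sigma'}(T')=\conv(v_i\mid i\neq n-1)$. Restricting the signs of $q_\xi$ from $T$ to $T'$ shows that the $\sigma'$-decomposition of $T'$ is $I_1,\dots,I_{d-3},\,I_{d-2}\setminus\{t_{n-1}\}$; since $I_{d-2}$ has $n-d+3\ge 5$ elements this is again a partition into $d-2$ intervals with singleton first entries and last interval of size $(n-1)-(d-3)$, so $P_{\sigma'}(T')$ satisfies the hypotheses with $n-1$ and is, by induction, a stacked $d$-polytope on $n-1$ vertices. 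By \Cref{vertices}, $v_{n-1}$ is a vertex of $\polyc(T)$, hence $v_{n-1}\notin P_{\sigma'}(T')$ and $\polyc(T)=\conv\bigl(P_{\sigma'}(T'),v_{n-1}\bigr)$; it remains to prove that $v_{n-1}$ lies beyond exactly one facet of $P_{\sigma'}(T')$.

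Here \Cref{geometric Gale} supplies the translation: for $S\subseteq T'$ with $|S|=d$, $S$ corresponds to a facet of $P_{\sigma'}(T')$ iff $\lambda_{\xi,S}$ has a constant sign $\varepsilon_S$ on $T'\setminus S$, and in that case $v_{n-1}$ lies beyond that facet iff $\lambda_{\xi,S}(t_{n-1})$ has sign $-\varepsilon_S$, i.e.\ iff $S$ is \emph{not} a facet of $\polyc(T)$. So it suffices to show that exactly one size-$d$ subset of $T'$ is a facet of $P_{\sigma'}(T')$ but not of $\polyc(T)$. I would obtain this by comparing the two facet families through \Cref{lem:facet-roots}: the conditions \eqref{condition-1}--\eqref{condition-3} for $\polyc(T)$ and for $P_{\sigma'}(T')$ coincide, the only difference being that $\{t_{n-2},t_n\}$ is a consecutive pair of $I_{d-2}\setminus\{t_{n-1}\}$ but not of $I_{d-2}$ (in the latter $t_{n-1}$ separates them). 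Hence a facet $S\subseteq T'$ of $P_{\sigma'}(T')$ is also a facet of $\polyc(T)$ precisely when its unique decomposition does not use the pair $\{t_{n-2},t_n\}$ for $S_3$; and from \eqref{condition-1}--\eqref{condition-3} one checks there is a single facet of $P_{\sigma'}(T')$ that does use it, namely $S^{\ast}=\{t_1,\dots,t_{d-2},t_{n-2},t_n\}$ — because $t_n\in S^{\ast}$ forces $S_2=\{t_1\}$, hence $S_1=\{t_2,\dots,t_{d-2}\}$ and $S_3=\{t_{n-2},t_n\}$, and $S^\ast$ fails \eqref{condition-3} for $\polyc(T)$ since $\{t_{n-2},t_n\}$ is not consecutive in $I_{d-2}$ (using $n-2>d-2$ and $d-1<n-2$, valid as $n\ge d+2$). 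Therefore $v_{n-1}$ is beyond exactly the facet $\conv(v_i\mid t_i\in S^{\ast})$, so $\polyc(T)$ is a stacking of the stacked polytope $P_{\sigma'}(T')$ and is thus a stacked $d$-polytope on $n$ vertices.

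The geometric content is packaged cleanly by \Cref{geometric Gale}, so the actual work — and the main obstacle — is combinatorial: spotting the right point to delete (a vertex of $\polyc(T)$ contained in exactly $d$ facets, which here is $t_{n-1}$), and carrying out the facet bookkeeping with \Cref{lem:facet-roots} while keeping careful track of the orientation of the defining inequalities so that ``beyond'' is read off with the correct sign, and of the degenerate index comparisons that hold only because $n\ge d+2$.
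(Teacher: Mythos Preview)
Your argument is correct and follows essentially the same approach as the paper: induct on $n$, delete $p=t_{n-1}$, observe that the smaller polytope $P_{\sigma'}(T')$ again satisfies the hypotheses, and identify the single ``lost'' facet $\bar S=\{t_1,\dots,t_{d-2},t_{n-2},t_n\}$ (your $S^\ast$). The only stylistic difference is that you invoke the standard ``point beyond exactly one facet'' characterisation of a stacking and do the bookkeeping through \Cref{lem:facet-roots}, whereas the paper verifies the facet structure of the stacking directly via sign computations with $p_S/q_\xi$; both routes are equally short here.
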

\begin{proof}
	With indexing $T = \{t_1,\dots,t_n\}$ let $p = t_{n-1}$, $\bar{S} = \{t_1,\dots,t_{d-2},t_{n-2},t_n\} \subset T$ and
	fix any $\xi \in \interior(\sigma)$. 
	We denote $P_n = \poly(T)$ and $P_{n-1} = \poly(T\setminus \{p\})$, and show that $P_n$ is a stacking over $P_{n-1}$ onto the facet corresponding to $\bar{S},$ with additional vertex corresponding to $p$.
	Since $P_{n-1}$ has signed $\sigma$-decomposition $T \setminus \{p\} = \bigcup_{j=1}^{d-2} I'_j$ so that
	$|I'_1| = |I'_2| = \dots = |I'_{d-3}| = 1$ and $|I'_{d-2}| = n-(d-3)-1$, the following argument can then be applied iteratively, terminating at the simplex $P_{d+1}$, thereby proving the claim. \medskip
	
	To show that $P_n$ is a stacking over $P_{n-1}$,
	we need to prove that 
	\begin{enumerate}[label={\arabic*)}]
		\item for all $S \subset T \setminus \{p\}, S \neq \overline{S}$, the set $S$ corresponds to a facet of $P_n$ if and only if $S$ corresponds to a facet of $P_{n-1}$, and that
		\item for all $S \subset T, p \in S$, the set $S$ corresponds to facet of $P_{n}$ if and only of $S \setminus \{p\}$ is a $(d-2)$-dimensional facet of $\overline{S}$. 
	\end{enumerate}

	For the first part, recall from \Cref{geometric Gale} that a set $S \subset T$ is a facet of $P_n$ if and only if $|S|=d$ and $\lambda_{S,\xi} = \frac{p_S}{q_\xi}$ has constant sign on $t \in T \setminus S$.
	In particular, for any facet $S \subset T \setminus \{p\}$ of $P_n$ holds that $S \neq \bar{S}$, and moreover
	$\lambda_{S,\xi}$ has constant sign on all $(T \setminus \{p\}) \setminus S$, so $S$ corresponds to a facet of $P_{n-1}$. 
	
	To prove the other direction, let $S \subset (T \setminus \{p\})$ correspond to a facet of $P_{n-1}$. 
	If $t_{n-2} \not \in S$, then $\sgn(\lambda_{S,\xi}(t_{n-2})) = \sgn(\lambda_{S,\xi}(t_{n-1})) \neq 0$. Thus, $\lambda_{S,\xi}$ has constant sign on all $T\setminus S$, so $S$ is  facet of $P_n$. A similar argument applies in the case where $t_n \not \in S$.
	It remains to show the statement if $t_{n-2},t_n \in S$. 
	Following \Cref{cor:bijection-facets-xi-pa} or \Cref{lem:facet-roots}, it can be checked that the facets of $P_{n-1}$ containing both $t_{n-2}$ and $t_n$ are either 
	\[
	S = \overline{S} \quad \text{ or } \quad S = S_i = \{t_j \mid j \in [d-2] \setminus \{i\}\} \cup \{t_{n-3},t_{n-2},t_n\} \text{ for } i \in [d-2] .
	\]
	Since $S_i$ is a facet of $P_{n-1}$, it follows that $\lambda_{S_i,\xi}$ has constant sign on $(T\setminus \{p\}) \setminus S_i$.
	In order to show that $S_i$ is a facet of $P_n$, it remains to show that 
	$\sgn\!\left(\frac{p_{S_i}(t_{i})}{q_\xi(t_{i})}\right) = \sgn\!\left(\frac{p_{S_i}(t_{n-1})}{q_\xi(t_{n-1})}\right)$, or, equivalently, that
	\[
	\sgn(p_{S_i}(t_i)) \sgn(q_\xi(t_i)) = 				\sgn(p_{S_i}(t_{n-1})) \sgn(q_\xi(t_{n-1})) .
	\]
	Since $\{t_i\} = I_i$, $t_{n-1} \in I_{d-2}$ and the sign of $q_\xi$ is alternating on intervals $I_j$, it follows that
	$\sgn(q_\xi(t_{n-1})) = (-1)^{d-i-2} \sgn(q_\xi(t_{i})) $. The number of roots of $p_{S_i}$ between $t_i$ and $t_{n-1}$ equals $d - (i-1) - 1$, so $\sgn(p_S(t_{n-1})) = (-1)^{d-i} \sgn(p_S(t_{i})) $. Hence,
	\[
	\sgn(p_S(t_{n-1})) \sgn(q(t_{n-1})) =
	(-1)^{2d - 2i - 2}
	\sgn(p_S(t_i)) \sgn(q(t_i)) = 	\sgn(p_S(t_i)) \sgn(q(t_i))  ,
	\]
	so $S_i$ is a facet of $P_n$. 
	
	For the second part, following again \Cref{cor:bijection-facets-xi-pa} or \Cref{lem:facet-roots}, it can be checked that the facets of $P_{n}$ containing $p = t_{n-1}$ are 
	$S = \overline{S} \setminus \{t_j\} \cup \{p\}$ for $t_j\in \overline{S}$. 
	Because $P_{n-1}$ is a simplicial polytope, $\overline{S}$ is a simplex, hence the facets of $\overline{S}$ are $\overline{S} \setminus \{t_j\}, t_j \in \overline{S}.$ 
	It follows that $S$ is a facet of $P_{n}$ containing $p$ if and only if $S \setminus \{p\}$ is a facet of $\overline{S}$.
\end{proof}

Recall from \Cref{ex:cyclic-dividers} that odd-dimensional Veronese polytopes with one divider correspond to cyclic polytopes. We now turn our attention to this setting for even dimensions.
A polytope $P$ is \emph{$k$-neighbourly} if any subset of $k$ or less vertices form a face of $P$. A $\lfloor d/2 \rfloor$-neighbourly polytope is called \emph{neighbourly}. 

\begin{proposition}\label{prop:xi-unit-direction-combinatorial}
Let $\polyc(T)$ be a Veronese polytope of even dimension $d \geq 4$, such that the $\sigma$-decomposition partitions $T$ into $2$ intervals, $T = \{t_1,\dots,t_n\} = I_1 \cup I_2$,  $|T|=n$.
Then the following holds.
	\begin{enumerate}[label={\textup{(}\roman*\textup{)}},ref={\roman*}]
		\item If $n = d+2$ and $|I_1|,|I_2|$ are both even, then $\polyc(T)$ is cyclic. \label{item:case4a}
		\item \label{item:case-new} If $n = d+2$ and $|I_1|,|I_2|$ are both odd, then $\polyc(T)$ is not a neighbourly polytope. 
		\item If $n > d+2$ then $\polyc(T)$ is not a neighbourly polytope.  \label{item:case4}
	\end{enumerate}
	In particular, the polytopes in \eqref{item:case-new} and \eqref{item:case4} are not cyclic.
\end{proposition}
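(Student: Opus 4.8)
The plan is to read the facets of $P_\sigma(T)$ off \Cref{lem:facet-roots} with $k=1$: since $d$ is even and the condition \eqref{condition-2} asks $d-k-|S_2|=d-1-|S_2|$ to be even, necessarily $|S_2|=1$, so every facet is $S=S_1\sqcup S_2\sqcup S_3$ with $S_1=\{S_1^1\}$, $S_1^1\in\{t_{1,n_1},t_{2,1}\}$, $S_2=\{S_2^1\}$, $S_2^1\in\{t_{1,1},t_{2,n_2}\}$, and $S_3$ a disjoint union of $(d-2)/2$ consecutive pairs, where $n_j=|I_j|$. As $|\mathring D|=l=k+1=2<d$, \Cref{vertices} makes all $n$ generating points vertices, so (using that $P_\sigma(T)$ is simplicial, \Cref{prop:simplicial}) a subset $A\subseteq T$ spans a face precisely when it is contained in a facet. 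Since cyclic polytopes are neighbourly and $C_d(d+2)$ is the unique neighbourly simplicial $d$-polytope with $d+2$ vertices (see, e.g., \cite{grunbaum1967convex}), the ``in particular'' assertion is immediate once (ii) and (iii) are proved, (ii) and (iii) amount to producing a missing $\lfloor d/2\rfloor$-face, and (i) will follow from showing $P_\sigma(T)$ is neighbourly.

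\textbf{Cases (ii) and (iii).} Here the engine is a non-face criterion: call $t_{1,1},t_{1,n_1},t_{2,1},t_{2,n_2}$ the \emph{corners}; if $A\subseteq T$ has $|A|=d/2$, contains no corner, and contains no two consecutive elements of an interval, then $A$ spans no face. Indeed, in a facet $S=S_1\sqcup S_2\sqcup S_3$ containing $A$, every non-corner $a\in A$ must lie in a consecutive pair of $S_3$, and these pairs are pairwise disjoint because $A$ contains no consecutive pair, whence $|S_3|\ge 2|A|=d>d-2$, a contradiction. It remains to exhibit such an $A$: inside $I_j$ the non-corner points $t_{j,2},\dots,t_{j,n_j-1}$ contain a set with no two consecutive (the even-indexed ones) of size $\lceil\max(n_j-2,0)/2\rceil$; summing over $j=1,2$ and using $\lceil x\rceil+\lceil y\rceil\ge\lceil x+y\rceil$, this total equals $\tfrac{n_1-1}{2}+\tfrac{n_2-1}{2}=d/2$ when $n=d+2$ with $n_1,n_2$ both odd, and is at least $\lceil(n-4)/2\rceil\ge d/2$ when $n\ge d+3$. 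Taking $A$ of size exactly $d/2$ inside such a set shows $P_\sigma(T)$ is not $\lfloor d/2\rfloor$-neighbourly, which proves (ii) and (iii), and hence that these polytopes are not cyclic.

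\textbf{Case (i).} Write $n_1=2a$, $n_2=2b$, so $a+b=(d+2)/2$ and $d/2=a+b-1$; I would show that \emph{every} $A\subseteq T$ with $|A|=d/2$ lies in a facet, which (by the reductions above) forces $P_\sigma(T)\cong C_d(d+2)$. Call a consecutive pair a \emph{domino} and a single point a \emph{monomino}, so each facet is $S_3$ (dominoes) together with two monominoes, one in $\{t_{1,n_1},t_{2,1}\}$ and one in $\{t_{1,1},t_{2,n_2}\}$. First suppose $A\cap I_1\ne I_1$ and $A\cap I_2\ne I_2$. For $p=t_{1,i}\in I_1\setminus A$, removing $p$ splits $I_1$ into two paths whose lengths sum to the odd number $2a-1$; each such path is covered by dominoes plus a monomino at its boundary end exactly when its length is odd, so $I_1\setminus\{p\}$ is covered by dominoes and one monomino, at $t_{1,1}$ when $i$ is even and at $t_{1,n_1}$ when $i$ is odd, and symmetrically the monomino of $I_2\setminus\{q\}$ for $q=t_{2,j}\in I_2\setminus A$ sits at $t_{2,n_2}$ when $j$ is odd and at $t_{2,1}$ when $j$ is even. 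Hence $(I_1\setminus\{p\})\sqcup(I_2\setminus\{q\})$ is a facet containing $A$ precisely when $i\equiv j\pmod 2$, and a short check shows such matching $p,q$ fail to exist only if $A\cap I_1=I_1$ or $A\cap I_2=I_2$ (otherwise the only obstruction would force $\mathrm{Even}(I_1)\subseteq A$ and $\mathrm{Odd}(I_2)\subseteq A$, or the reverse diagonal, so $|A|\ge a+b>d/2$). In the remaining case, say $A\cap I_1=I_1$ (which forces $b\ge a+1$, so $|I_2\setminus A|=a+b+1$), keep $S\cap I_1=I_1$ and delete from $I_2$ a pair $\{x,y\}\subseteq I_2\setminus A$ of opposite index-parities, which exists because $a+b+1$ exceeds the number $b$ of positions of one parity in $I_2$; one then checks, distinguishing whether the two monominoes land both in $I_1$ or both in $I_2$, that $I_2\setminus\{x,y\}$ tiles into the required shape, giving a facet containing $A$.

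\textbf{Main obstacle.} Everything outside case (i) is bookkeeping once \Cref{lem:facet-roots}, \Cref{vertices}, \Cref{prop:simplicial}, and the uniqueness of the neighbourly $(d+2)$-vertex polytope are in hand. The genuinely delicate point is case (i): keeping track of \emph{which} monomino type (one of $\{t_{1,n_1},t_{2,1}\}$ versus one of $\{t_{1,1},t_{2,n_2}\}$) is supplied by \emph{which} interval, together with the parity and boundary constraints this forces. Each sub-case, however, reduces to the trivial facts that a path of even length is a union of dominoes and a path of odd length is a union of dominoes plus one boundary monomino, so the difficulty is organisational rather than conceptual.
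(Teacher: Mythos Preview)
Your argument is correct. For parts \eqref{item:case-new} and \eqref{item:case4} you do essentially what the paper does---exhibit a $d/2$-subset not contained in any facet---only you phrase it via the decomposition $S=S_1\sqcup S_2\sqcup S_3$ of \Cref{lem:facet-roots} rather than via $\sigma$-parity alternating complements as in \Cref{cor:bijection-facets-xi-pa}. Your ``no corners, no consecutive pair'' criterion is a clean repackaging of the same obstruction; the counting that produces such an $A$ matches the paper's explicit $V=V_1\cup V_2$. (A cosmetic slip: when matching parities fails you should conclude $\mathrm{Odd}(I_1)\subseteq A$ and $\mathrm{Even}(I_2)\subseteq A$, not the other way round, but the cardinality bound $|A|\ge a+b$ is unaffected.)

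For part \eqref{item:case4a} your route is genuinely different. The paper observes that, since $n-d=2$, a facet is determined by its two-element complement, and the permutation $\pi$ fixing $[m]$ and reversing $\{m+1,\dots,n\}$ sends $\sigma$-p.a.\ pairs bijectively to p.a.\ pairs; this is a two-line isomorphism of facet sets with $C_d(n)$. You instead verify neighbourliness directly by a domino/monomino tiling analysis and then invoke the (standard) uniqueness of the neighbourly simplicial $d$-polytope on $d+2$ vertices. Your argument is sound---the parity match $i\equiv j\pmod 2$ is exactly right, and in the residual case $A\supseteq I_1$ the opposite-parity pair in $I_2\setminus A$ exists because $|I_2\setminus A|=a+b+1>b$---but it costs a page where the paper spends a sentence, and it imports an external uniqueness fact. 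The paper's permutation trick is the more economical proof; your approach has the mild advantage of making the neighbourliness visible intrinsically rather than via a comparison.
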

An example of \eqref{item:case4} is provided in \Cref{ex:facets}.

\begin{proof}
	\eqref{item:case4a}.
	Let $m = |I_1|$ and $\pi \in S_n$ be the permutation given by $\pi(i) = i$ for $i = 1,\dots,m$ and $\pi(i) = n+m+1-i$ for $i = m+1,\dots,n$.
	By \Cref{cor:bijection-facets-xi-pa}, facets of $\polyc(T)$ correspond to $\sigma$-parity alternating sequences of length $n-d = 2$. Because of this short length, it is easy to verify that $(l_1,l_2)$ is $\sigma$-p.a.\ if and only if $(\pi(l_1),\pi(l_2))$ is p.a.\, which proves that $\pi$ induces a bijection between facets of $\polyc(T)$ and facets of the cyclic polytope $C_d(n)$.

	\eqref{item:case-new} and \eqref{item:case4}. 
	To show that $\polyc(T)$ is not neighbourly, it is enough to find
	a set $V$ of cardinality $d/2$ corresponding to vertices of $\polyc(T)$ which are not contained in any facet.
	To this end, we define $V = V_1 \cup V_2$, for
	\[
		V_1 = \{m-2k_1 +1,\dots,m-3,m-1\} \quad \text{and} \quad V_2 = \{m+2,m+4,\dots,m+2k_2\},
	\]
	where 
 	$k_1 + k_2 = \tfrac{d}{2}$ are such that $m \geq 2k_1 +1$ and  $n-m \geq 2k_2 + 1$. The assumptions of \eqref{item:case-new}, \eqref{item:case4} ensure that such $k_1$ and $k_2$ exist.
 	We verify that 
 	$\{t_j \mid j \in V\}$ is not contained in any facet by showing that $[n] \setminus V$ does not contain a $\sigma$-p.a.\ sequence.

	To prove the claim by contradiction, suppose that $L = (l_1,\dots,l_{n-d}) \subset [n] \setminus V$ is $\sigma$-p.a.  Restricted to $[m]$, $L$ is parity-alternating, and thus 
	\[L \cap [m] \subseteq \{1,2,\dots,m-2k_1 - 1\} \cup \{m-2k_1 +2i\} \quad \text{ for some } i \in \{0,\dots,k_1\} .
	\]
	Similarly, $L$ is p.a.\ when restricted to $\{m+1,\dots,n\}$, implying that
	\[
	 L \cap \{m+1,\dots,n\} \subseteq \{m+2j + 1\}\cup \{m+2k_2+2,m+2k_2+3,\dots,n-1,n\}.
	 \]
	  for some $j \in \{0,\dots,k_2\}$.
	We obtain that
	\[
		L \subseteq \tilde{L}_{ij} = [m-2k_1 -1] \cup \{m-2k_1 + 2i, m+2j + 1\} \cup \{m+2k_2+2,m+2k_2+3,\dots,n\},
	\]
	where $|\tilde{L}_{ij}| = n-d$. As $|L|=n-d$ by assumptions, we deduce that $L = \tilde{L}_{ij}$ for some $i \in \{0,\dots,k_1\}, j \in \{0,
	\dots,k_2\}$. However, the parities of $m-2k_1 + 2i, m+2j + 1$ are distinct, thereby contradicting that $L$ is $\sigma$-p.a.
\end{proof}

\begin{corollary}\label{cor:3d}
	Any Veronese $3$-polytope is a cyclic polytope or an octahedron.
\end{corollary}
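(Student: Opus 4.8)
The plan is to read the possible combinatorial types directly off the classification of Veronese polytopes by circular compositions. By \Cref{prop:simplicial} every Veronese $3$-polytope $P$ is simplicial, hence full-dimensional, and so has at least $d+1=4$ vertices. By \Cref{th:circular-compositions-comb-types}, the combinatorial type of $P$ is encoded by a circular composition $(\cT,\cD)$ with $|\cD|\le d=3$ and $|\cD|$ of the same parity as $d$; therefore $|\cD|\in\{1,3\}$, and I would treat these two cases in turn.

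If $|\cD|=1$, then by \Cref{ex:cyclic-dividers} the circular composition is isomorphic to the one induced by the standard cyclic $3$-polytope on $|\cT|$ vertices, so $P$ is a cyclic polytope.

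If $|\cD|=3=d$, I would argue directly via \Cref{th:circular-gale} and \Cref{vertices}. Since $d-|\cD|=0$, a subset of $\cT$ corresponds to a facet precisely when it is a transversal of the three dividers consisting of three distinct elements; in particular two elements of $\cT$ lying in a common divider are never joined by an edge, and by \Cref{vertices} the vertex set of $P$ is the union of the three dividers. Writing $\cT=\cI_1\cup\cI_2\cup\cI_3$ for the induced decomposition into arcs, not all three arcs can be singletons (else $|\cT|=3<4$), and I would distinguish cases by the number of singleton arcs. If no arc is a singleton, the six divider endpoints are distinct, all $2^3=8$ transversals are facets, and the three pairwise-disjoint dividers are exactly the non-edges, so the graph of $P$ is $K_{2,2,2}$ and $P$ is the octahedron (equivalently, this is a special case of \Cref{th:cross}). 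If exactly one arc is a singleton, its vertex lies in two of the dividers, so $P$ has $5$ vertices; a short enumeration of the admissible transversals shows $P$ has $6$ facets and a single non-edge, hence $P$ is the triangular bipyramid, which is $C_3(5)$. If two arcs are singletons, $P$ has $4$ vertices and is a $3$-simplex, which is $C_3(4)$. In every case $P$ is cyclic or the octahedron, completing the proof.

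I expect the only part requiring genuine care to be the case analysis for $|\cD|=3$, namely the (routine) verification of the facet counts and non-edges in the singleton sub-cases; everything else is an immediate consequence of the circular-composition dictionary established earlier. As an alternative to this bookkeeping one may simply invoke the classical enumeration of simplicial $3$-polytopes with at most $6$ vertices — the tetrahedron on $4$, the triangular bipyramid $=C_3(5)$ on $5$, and exactly $C_3(6)$ and the octahedron on $6$ — and note that in the case $|\cD|=3$ the polytope $P$ is simplicial with at most $6$ vertices, hence one of these.
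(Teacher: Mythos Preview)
Your argument is correct and follows essentially the same approach as the paper: split into the cases $|\cD|=1$ and $|\cD|=3$, handle the first via \Cref{ex:cyclic-dividers}, and in the second distinguish by the number of singleton arcs, invoking \Cref{vertices} to bound the vertex count and then identifying the resulting simplicial $3$-polytopes on $4$, $5$, or $6$ vertices (the paper cites \Cref{th:cross} and \Cref{th:simplex} in place of your direct enumeration, and uses the uniqueness of $C_3(5)$ just as you do). One stylistic quibble: ``simplicial, hence full-dimensional'' is a non sequitur---full-dimensionality is part of the hypothesis ``Veronese $3$-polytope'', not a consequence of simpliciality.
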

\begin{proof}
	We use the characterization from \Cref{cor:isomorphims} to describe the isomorphism class of $\polyc(T)$ via the isomorphism class of the induced circular composition $(\cT,\cD)$.
	Since $d=3$ is odd, we have that $|\cD| \in \{1,3\}$.
	For $|\cD|=1$, \Cref{ex:cyclic-dividers} implies that $\polyc(T)$ is a cyclic polytope on $n$ vertices. If $|\cD|=3$, then $\cT = \cI_1 \cup \cI_2 \cup \cI$.
	If $|I_j|\geq 2$ for all $j\in [3]$, then $\polyc(T)$ is an octahedron, the $3$-dimensional cross-polytope, by \Cref{th:cross}. 
	For the case when exactly one of $I_1,I_2,I_3$ has cardinality $1$, we can assume without loss of generality that $|\cI_1|=1, |\cI_2|=a, |\cI_3|=b$, where $|T|=1+a+b$ and $a,b \geq 2$. 
	It follows from \eqref{circgale:dividers} that $\polyc(T)$ has only $5$ vertices, namely the union of all dividers.
	Since the cyclic polytope $C_3(5)$ is the unique combinatorial type of simplicial $3$-polytopes with $5$ vertices, we found $\polyc(T) \cong C_3(5)$.
 	Finally, if exactly two of the sets $I_1,I_2,I_3$ have cardinality $1$, then $\polyc(T)$ is a $3$-dimensional simplex by \Cref{th:simplex}, being a cyclic polytope on $4$ vertices.
\end{proof}

\begin{rem}[Computational results]\label{rem:computational}
	\Cref{table:small-dimensions} (on \cpageref{table:small-dimensions}) and \Cref{table:more-comb-types} (on \cpageref{table:more-comb-types}) show the numbers of distinct combinatorial types of Veronese polytopes of dimension $d \leq 7$ with $n$ vertices, for small $n$. These results were obtained by a computational enumeration
	via \texttt{SageMath} \cite{sagemath}. 
	In each of these computations, we additionally checked how many of the obtained combinatorial types are neighbourly, and how many types are stacked. 
	For every pair $(d,n), d \geq 4$ from \Cref{table:small-dimensions} or \labelcref{table:more-comb-types} with well-defined values, 
	we found that
	there is exactly one combinatorial type that is a stacked polytope, and that for most pairs $(d,n)$, the cyclic polytope $C_d(n)$ is the only neighbourly polytope (with the exception of $(5,8)$ with two neighbourly types, and $(7,10)$ with $4$ neighbourly types). Together with \Cref{thm:comb-types-few-vertices}, this indicates that for any $(d,n)$ such that $n > d+3$, cyclic polytopes might be the only neighbourly Veronese polytopes, and the polytope as constructed in \Cref{th:stacked} might be the only stacked Veronese polytope.
\end{rem}

\subsection{Number of facets of Veronese polytopes}

In this section, we count the number of facets of Veronese polytopes.
In the following, for fixed $l\in \N$ and $r$ a non-negative integer
we denote 
\begin{align*}
	\Delta_{r}= \{(r_1,\dots,r_{l}) \in \Z^l \mid r_j \geq 0, \sum_{j=1}^{l} r_j = r\} ,
\end{align*}
and use the conventions that $\prod_{i \in \emptyset} x_i = 1$, and that for the binomial coefficient holds $\binom{n}{k} = 0$ whenever $n<0$, and $\binom{n}{0}=1$ for $n\geq 0$. Given two sets $A = \{a_1,\dots,a_{q}\}, M = \{m_1,\dots,m_{q}\} \subset [l]$, we say that $A$ and $M$ are \emph{interlacing} if $a_1 < m_1 < a_2 < m_2 < \dots < m_q$ or $m_1 < a_1 < m_2 < a_2 < \dots < a_q$. 

\begin{thm}[Number of facets of Veronese polytopes]\label{th:facet-count} Let $\polyc(T)$ be a $d$-dimensional Veronese polytope with induced circular composition $(\cT,\cD)$ such that $\cD \neq \emptyset$,
	 $\cT = \bigcup_{j=1}^l \cI_j$ and $m_j = |\cI_j|$. The number of facets of $\polyc(T)$ is
		
		\begin{equation*}
			\begin{split}
				\sum_{(r_1,\dots,r_{l}) \in \Delta_{\frac{d-l}{2}}} \ 
				\sum_{\substack{A, B \subset [l] \\ A,B \text{ interlacing}   \\ 1 \leq |B|\leq \lfloor l/2 \rfloor } } \ 
				 %\\
				%&\qquad \ 
				\prod_{j \in A} \sma m_j  - r_j \\ r_j\strix  
				&\prod_{j \in B} \sma m_j - 2- r_j \\ r_j \strix 
				\prod_{j \in [l] \setminus (A \cup B)} \sma m_j - 1 - r_j \\ r_j \strix \\
				&+2 \sum_{(r_1,\dots,r_{l}) \in \Delta_{\frac{d-l}{2}}} \ \prod_{j \in [l]} \sma m_j - 1 - r_j \\ r_j \strix  \ .
			\end{split}
		\end{equation*}

\end{thm}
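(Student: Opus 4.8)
The strategy is to count subsets $\cS \subset \cT$ of cardinality $d$ satisfying the circular facet condition from \Cref{circular Gale}, using the characterisation from \Cref{th:circular-compositions-comb-types} that facets of $\polyc(T)$ correspond bijectively to such subsets. Recall that $\cS = \cS_1 \sqcup \cS_2$, where $\cS_1$ picks one endpoint from each of the $l$ dividers (contributing $l$ elements), and $\cS_2$ consists of $\frac{d-l}{2}$ consecutive pairs drawn from the arcs $\cI_1,\dots,\cI_l$ after the divider-endpoints have been chosen. So the count naturally splits along: (a) which divider-endpoint is chosen on each divider, and (b) how many consecutive pairs land in each arc $\cI_j$, i.e.\ a composition $(r_1,\dots,r_l) \in \Delta_{(d-l)/2}$ recording that $r_j$ pairs are placed in $\cI_j$. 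Since a pair uses two points, placing $r_j$ non-overlapping consecutive pairs among the available points of $\cI_j$ is the classical ``choose $r_j$ non-adjacent dominoes in a row of $k$ cells'' count, giving $\binom{k - r_j}{r_j}$ where $k$ is the number of available points in $\cI_j$.

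The key bookkeeping issue is how many points of $\cI_j$ remain available for pairs, which depends on whether the two dividers bounding $\cI_j$ have their chosen endpoint inside $\cI_j$ or in the neighbouring arc. First I would fix the composition $(r_1,\dots,r_l)$, and then sum over all $2^l$ choices of divider-endpoints (equivalently, over the subset of dividers whose chosen endpoint lies ``to the right''). For a fixed such choice, each arc $\cI_j$ loses $0$, $1$, or $2$ points to the divider endpoints of its two bounding dividers; classify arcs as type-$A$ (loses $0$), type-generic (loses $1$), type-$B$ (loses $2$). A short combinatorial argument shows that the set $A$ of arcs losing nothing and the set $B$ of arcs losing two are necessarily \emph{interlacing} around the circle (you cannot have two consecutive type-$A$ arcs without a type-$B$ arc between them absorbing the ``extra'' endpoint, and vice versa), and that $|A| = |B|$ when $l$ is even, or $|A| = |B| \pm 1$ with one extra ``special'' configuration when the wrap-around contributes, which is exactly where the additive $+2\sum \prod \binom{m_j-1-r_j}{r_j}$ term comes from (this corresponds to the two ``all arcs lose exactly one'' cyclic assignments, i.e.\ the endpoints marching consistently clockwise or counterclockwise, for which $A = B = \emptyset$). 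With $A, B$ fixed, the number of endpoint-assignments realising that $(A,B)$ pattern is $1$ (once $A$ and $B$ are chosen interlacingly, the clockwise/counterclockwise pattern of the remaining endpoints is forced), and the pair-count factorises as $\prod_{j \in A}\binom{m_j - r_j}{r_j}\prod_{j\in B}\binom{m_j - 2 - r_j}{r_j}\prod_{j \notin A\cup B}\binom{m_j - 1 - r_j}{r_j}$.

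Assembling these pieces and summing over $(r_1,\dots,r_l) \in \Delta_{(d-l)/2}$ and over interlacing pairs $A,B$ (with $1 \le |B| \le \lfloor l/2\rfloor$, the natural range once one observes $|A|=|B|$ in the generic wrap, up to the special term), plus the additive correction term, yields the stated formula. The main obstacle I anticipate is the careful cyclic case analysis in the middle step: correctly identifying which endpoint-assignments around the circle give rise to which $(A,B)$ interlacing patterns, handling the wrap-around between $\cI_l$ and $\cI_1$ without double-counting or omitting the two consistently-oriented assignments, and verifying that $|A| = |B|$ in the interlacing count while the uniform assignments contribute the separate summand with factor $2$. I would organise this by introducing, for a fixed endpoint-assignment, the $\{+1,-1\}$-vector recording per divider whether its chosen endpoint is the ``right'' or ``left'' one, reading off $A$ and $B$ as the descent/ascent positions of this cyclic word, and invoking the elementary fact that the descents and ascents of a cyclic binary word interlace and are equinumerous. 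Everything else — the domino count $\binom{k-r}{r}$ and the final reindexing — is routine.
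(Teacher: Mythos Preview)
Your proposal is correct and follows essentially the same approach as the paper: both count facets via the circular facet condition, classify arcs by how many divider-endpoints of $\cS_1$ they contain (your types $A$, generic, $B$ are exactly the paper's $D_0,D_1,D_2$), observe that $A$ and $B$ interlace and determine $\cS_1$ uniquely, and then use the domino count $\binom{k-r_j}{r_j}$ for the pairs in $\cS_2$, with the two uniformly-oriented endpoint assignments ($A=B=\emptyset$) giving the separate $+2$ term.

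One small point to tidy: your claim that ``$|A|=|B|$ when $l$ is even, or $|A|=|B|\pm 1$ with one extra special configuration'' is not right---in fact $|A|=|B|$ always, regardless of the parity of $l$. The paper obtains this cleanly from the two linear equations $2D_2+D_1=l$ (counting divider-endpoints) and $D_2+D_1+D_0=l$ (counting arcs), which force $D_0=D_2$. Your parenthetical explanation of the $+2$ term (the two consistently clockwise/counterclockwise assignments, for which $A=B=\emptyset$) is the correct one, and is exactly what the paper says; once you drop the $\pm 1$ red herring your argument lines up with theirs.
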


Recall that if $\cD = \emptyset$, then the corresponding polytope is a cyclic polytope in even dimension, and the number of facets is $\binom{n-d/2}{d/2} + \binom{n - 1  - d/2}{d/2 -1}$, which is the number of consecutive pairs in (cyclically ordered) $\cT$, $|\cT| = n$.

\begin{proof}
	Recall that $\cS \subset \cT$ defines a facet of $\polyc(T)$ if and only if $\cS = \cS_1 \cup \cS_2$, satisfying conditions \eqref{circgale:dividers}, \eqref{circgale:pairs} from \Cref{circular Gale}. 
	Condition \eqref{circgale:dividers} describes how to choose one point for each divider, forming the set $\cS_1$. By construction, we have $|\cS_1 \cap \cI_j| \in \{0,1,2\}$ for each $j\in[l]$.
	With
	\[
		D_i = |\{j \in [l] \mid |\cI_j \cap \cS_1| = i\}|
	\]
	we find
	\begin{gather*}
		2\cdot D_2 +1\cdot D_1 + 0\cdot D_0 =l \quad \text{and} \quad
		D_2 + D_1 + D_0 = l,
	\end{gather*}
	where the first equation expresses that $|\cS_1| = l$, and the second that there are $l$ intervals $\cI_1,\dots,\cI_l$. These two equations imply that $D_0 = D_2$, and 
	since $D_1 \geq 0$,
	the first equation gives the bound $D_2 \leq \lfloor l/2 \rfloor$.

	We parametrise choices of $l$ points for $\cS_1$. Let $B =\{b_1,\ldots,b_{D_2}\} \subset [l]$ label the intervals with 2 elements in $\cS_1$, and let $A = \{a_1,\ldots,a_{D_0}\} \subset [l]$ label intervals with $0$ elements in $\cS_1$. Then, the condition \eqref{circgale:dividers} implies that 
	if $A,B \neq \emptyset$ then $A$ and $B$ are interlacing.
	Moreover, \eqref{circgale:dividers} implies that $A$ and $B$ uniquely determine $\cS_1$.
	Indeed, if $b_i,b_{i+1} \in B, a \in A$ are such that $b_i < a < b_{i+1}$, then for each $j \in [l]$ such that $b_i < j < a$ we have $t_{j,n_j} \in \cS_1$, and for $a < j < b_{i+1}$ we have $t_{j,1} \in \cS_1$.
	One similarly determines the values for $j<b_1$ and $j>b_{|D_2}$.

	Let $A$ and $B$ be as above and such that $D_0 = D_2 \neq 0$. This fixes the choice of $\cS_1$. We now count the number of choices of $d-l$ pairs in accordance with \eqref{circgale:pairs}. First, observe that there are ${m-r \choose r}$ choices of $r$ distinct consecutive pairs in $\{1,\ldots,m\}$.
	Let $r$ be such that $d-l=2r$.
	To complete $\cS_1$ into a facet $\cS = \cS_1 \cup \cS_2$ we are left with choosing $r$ pairs, which we do by choosing $r_j$ pairs in $I_j$, $j \in [l]$, such that $\sum_{j=1}^{l} r_j=r$. The number of such $\cS_2$ is thus
	\begin{equation*}
		\sum_{(r_1,\dots,r_{l}) \in \Delta_{r}} \prod_{j \in A} \binom{m_j  - r_j}{r_j} \prod_{j \in B} \binom{m_j -2- r_j}{r_j} \prod_{j \in [l] \setminus (A \cup B)} \binom{m_j - 1 - r_j}{r_j}.
	\end{equation*}

	We now consider the case where $D_0=D_2=0$.
	Then $\cS_1 = \{t_{j,1} \mid j \in [l]\}$ or $\cS_1 = \{t_{j,n_j} \mid j \in [l]\}$. These two cases together yield
		\begin{equation*}
		2\sum_{(r_1,\dots,r_{l}) \in \Delta_{r}} \prod_{j \in [l]} \binom{m_j - 1 - r_j}{r_j} \ .
	\end{equation*}
	Summing over all choices of $A$ and $B$ yields the final result.
\end{proof}

\begin{example}[Facets of odd-dimensional cyclic polytopes]\label{ex:facets-cyclic}
	We recover the number of facets of odd-dimensional cyclic polytopes from \Cref{th:facet-count}. 
	Therefore, let $d$ be odd and $l=1$ (see \Cref{ex:cyclic-dividers}).
	Then, $\lfloor \frac{l}{2} \rfloor= 0$, and the first sum is void.
	The indexing set of the second sum consists of a single point $r_1 = \frac{d-1}{2}$, and with $n=m_1$ we find
	\[
		2 \binom {n -1 - \frac{d-1}{2}}{ \frac{d-1}{2}}.
	\]
\end{example}

\begin{example}
	\label{ex:not-neigborly-faets}
	We consider Veronese polytopes with two dividers in four dimensions. 
	Concretely, we choose $T = \{-3,-2,-1,1,2,3,4\}$ and $\xi = (0,-1,0,0,0)$ as in \Cref{ex:facets}, so $m_1 = 3$ and $m_2 = 4$. 
	The number of facets if $\poly(T)$ is
	\begin{multline*}
		\sum_{(r_1,r_{2}) \in \Delta_{1}} 
		\sum_{\substack{A\subset [2] \\ |A| = 1 }} \ 
		\prod_{j \in A} \sma m_j - r_j \\ r_j \strix \prod_{j \in B = [2] \setminus A} \sma m_j - 2 - r_j \\ r_j \strix 
		+
		2 \! \! \! \sum_{(r_1,r_{2}) \in \Delta_{1}} \prod_{j \in [2]} \sma m_j - 1 - r_j \\ r_j \strix
		\\
		= \binom{m_1 - 3}{1} + \binom{m_1 - 1}{1} + \binom{m_2 - 3}{1} + \binom{m_2 - 1}{1} + 2\left(\binom{m_1 - 2}{1}+\binom{m_2 - 2}{1}\right) = 12.
	\end{multline*}	
	Note that a $4$-dimensional neighbourly polytope (just like the cyclic polytope) with $7$ vertices has $14$ facets, so $\poly(T)$ is not neighbourly.
\end{example}

\vspace{2em}

\begin{table}[h]
	\begin{tabular}{|c|c|c|c|}
		\hline
		$n$ & $d=4$ & $d=5$ & $d=6$ \\ \hline
		5 &         1  &             &            \\
		6 &         2 &          1  &            \\
		7 &         5 &          2  &    1      \\
		8 &         6 &          8 &     3     \\
		9 &         5 &          9 &   18     \\
		10 &         6 &       10 &     24    \\
		11 &         6 &        11 &     27   \\
		12 &         7 &        13 &    37   \\
		13 &         7 &       15 &      42  \\
		14 &         8 &       17 &     55   \\
		15 &         8 &       20 &       ?     \\
		16 &         9 &       22 &       ?     \\
		17 &         9 &       25 &        ?    \\
		18 &        10 &      28 &         ?   \\
		19 &        10 &       31 &        ?    \\
		20 &        11 &         ?  &        ?    \\
		21 &        11 &         ?  &         ?   \\ \hline
	\end{tabular}
	\vspace{.5em}
	\caption{Computational results for the number of combinatorial types of Veronese polytopes per dimension $d = 4,5,6$ and number of vertices $n$. This extends the results displayed in \Cref{table:small-dimensions}.}
	\label{table:more-comb-types}
\end{table}

%\newpage

\printbibliography

\end{document}